\newtheorem*{lemma*}{Lemma}
\newtheorem{lemma}[subsection]{Lemma}
\newtheorem*{theorem*}{Theorem}
\newtheorem{theorem}[subsection]{Theorem}
\newtheorem*{proposition*}{Proposition}
\newtheorem{proposition}[subsection]{Proposition}
\newtheorem*{corollary*}{Corollary}
\theoremstyle{definition}
\newtheorem*{definition*}{Definition}
\newtheorem*{example*}{Example}
\newtheorem{exercise}[subsection]{Exercise}
\theoremstyle{remark}
\newtheorem*{remark*}{Remark}
\newtheorem{remark}[subsection]{Remark}
 \newtheorem{definition}[subsection]{Definition}
\renewcommand{\phi}{\varphi}
\newcommand{\be}{\begin{enumerate}}
\newcommand{\ee}{\end{enumerate}}
\title{ Tensor fundamental theorems of invariant theory}
\author{Claudio Procesi}
\begin{document}\maketitle
\begin{abstract}
The aim of this paper is to establish a   first and second fundamental theorem for $GL(V)$ equivariant polynomial maps from $k$--tuples of matrix variables $End(V)^{  k}  $  to tensor spaces $End(V)^{  \otimes n}$, in the spirit of   H. Weyl's book {\em The classical groups} \cite{Weyl} and of symbolic algebra.

\end{abstract}\tableofcontents

\section{Introduction} In this paper  $V$ denotes a vector space  over  a field $F$  of characteristic 0   of dimension $d$. In fact since all the formulas developed have rational coefficients it is enough to assume $F=\mathbb Q$ and $End(V)=M_d(F)=M_d,$  matrices in a basis.

The aim of this paper is to establish a   first and second fundamental theorem (FFT and SFT for short), in the spirit of   H. Weyl's book {\em The classical groups} \cite{Weyl},  for  the algebras $\mathcal T_{k}^n(V)=\mathcal T_{X}^n(V)$ of $GL(V)$ equivariant polynomial maps $F:End(V)^{  k}  \to End(V)^{  \otimes n},$ from $k$--tuples  $X=\{x_1,\ldots,x_k\}$ of matrix variables $End(V)^{ k}  $  to tensor spaces $End(V)^{  \otimes n}=M_d^{  \otimes n}$. \smallskip

From an algebraic point of view,  in a given basis,  so that $End(V)=M_d(\mathbb Q)$ is the algebra of $d\times d$ matrices,  the polynomial functions on  $M_d(\mathbb Q)^k$ are the polynomials $\mathbb Q[\xi^{(i)}_{a,b }]$ in the $kd^2$  variables $\xi^{(i)}_{a,b},\ i=1,\ldots,k;\ a,b=1,\ldots,d$ which are thought of as the entries of the $k$ {\em generic $d\times d$ matrices}  
 \begin{equation}\label{geme}
\Xi=\{\xi_1,\ldots,\xi_k\},\ \xi_i=(\xi^{(i)}_{a,b})\in M_d(\mathbb Q[\xi^{(i)}_{a,b }]) . 
\end{equation}  On $M_d(\mathbb Q)^k$ acts the group $GL(d,\mathbb Q)$  by simultaneous conjugation inducing  an action on $\mathbb Q[\xi^{(i)}_{a,b }]$.
The polynomial maps from $M_d(\mathbb Q)^k$ to $M_d(\mathbb Q)^{\otimes n}$ form the matrix algebra  
 \begin{equation}\label{imme}
M_d(\mathbb Q)^{\otimes n}\otimes \mathbb Q[\xi^{(i)}_{a,b }]\simeq M_{d^n}[(\mathbb Q[\xi^{(i)}_{a,b }]).
\end{equation} On this algebra we have the diagonal action of  $GL(d,\mathbb Q)$ and finally one has
 \begin{equation}\label{defeq}
\mathcal T_{k}^n(V)=\left[M_d(\mathbb Q)^{\otimes n}\otimes \mathbb Q[\xi^{(i)}_{a,b }] \right]^{GL(d,\mathbb Q)}\subset     M_d(\mathbb Q)^{\otimes n}\otimes \mathbb Q[\xi^{(i)}_{a,b }]      .
\end{equation}
 The FFT, Theorem \ref{FFT}  is fairly simple, based on  the basic Formula \eqref{foft}, the {\em interpretation Formula,} from which follows that such maps are the evaluations in  matrices of the symbolic twisted group algebra $T\langle X\rangle^{\otimes n}\ltimes \mathbb Q[S_n]$, Definition \ref{pua}. 
 
 Where $T\langle X\rangle$ is the free algebra with trace, Definition \ref{frat},
   in the variables $X=\{x_1,\ldots,x_k\}$ and $S_n$  commutes with $T\langle X\rangle^{\otimes n}$ by exchanging the tensor factors. In other words we have a surjective map  \begin{equation}\label{ilpin}
\pi_n: T\langle X\rangle^{\otimes n}\ltimes \mathbb Q[S_n]\to \mathcal T_{X}^n(V)=\left[M_d(\mathbb Q)^{\otimes n}\otimes \mathbb Q[\xi^{(i)}_{a,b }] \right]^{GL(d,\mathbb Q)}.\end{equation}  from the symbolic algebra to the algebra of equivariant maps.   

The map $\pi_n$ maps the variables $x_i$ to the generic matrices $\xi_i$ and the symmetric group $S_n$  to its copy inside  $M_d(\mathbb Q)^{\otimes n}$.  It is compatible with the trace as defined in Definition \ref{pua}.\smallskip

The SFT, Theorem \ref{SFT1}  is the heart of the paper.     In the spirit of $T$--ideals of universal algebra, cf. \S \ref{tide}, it describes the Kernels of the maps $\pi_n$. 

By the classical method of polarization and restitution one reduces to study the restriction to the multilinear elements  of  $T\langle X\rangle^{\otimes n}\ltimes \mathbb Q[S_n]$ of degree $k$ for all $k,n$. Here the main tool is the fact that these elements can be encoded by elements of $\mathbb Q[S_{n+k}]$  by a map  called {\em interpretation}  \ref{inyx}. 

 In \S  \ref{formule} we develop the basic formulas which this map satisfies and which are necessary for the proof of the main Theorem  \ref{SFT}.\smallskip

One defines in  Formula   \eqref{mira}, for each $d$,   the  {\em $d+2$ tensor Cayley Hamilton identities} $\mathfrak C_{k,d}(x),\ k=0,\ldots,d+1$, homogeneous of degree $k$ in the $x$ and for $d+1-k$ tensor
valued polynomials.
One  first deduces all identities from these ones, Theorem \ref{SFT}.   For $k=0,d,d+1$ these have classical interpretation as respectively the antisymmetrizer on $d+1$ elements, the Cayley--Hamilton identity  and the expression of $tr(x^{d+1})$ in term of $tr(x^i), \ i=1,\ldots,d$. 

For the other $k$ they are new identities. For instance $ \mathfrak C_{1,3}(x ),\  \mathfrak C_{2,3}(x )$ are
$${\footnotesize\boxed{[(1,2,3)+(1,3,2)-(1,2)- (1,3)- (2,3)+1](x\otimes 1\otimes 1+ 1\otimes x\otimes 1+ 1\otimes 1\otimes x-tr(x))} }$$ 
 $$   \boxed{(1-(1,2))\left(  [x^2\otimes 1+  1\otimes x^2+   x\otimes x] - \,tr(x)  [x \otimes 1+ 1\otimes x ] + \det(x)\right).}$$
 Finally  using a further operation $\mathtt t:T\langle X\rangle^{\otimes n}\ltimes \mathbb Q[S_n]\to T\langle X\rangle^{\otimes n-1}\ltimes \mathbb Q[S_{n-1}]$, a {\em formal partial trace}, Proposition \ref{passs1}, corresponding to  the natural trace       contraction $M_d^{\otimes n}\to   M_d^{\otimes n-1}$ one shows that
 $$\text{Formula \eqref{eFine}}\quad \mathtt   t(\mathfrak C_{k,d}(x) )=0,\ \mathtt t(\mathfrak C_{k,d}(x)\cdot 1^{n-1}\otimes x )=-(k+1) \cdot  \mathfrak C_{k+1,d}(x)$$
\begin{theorem*}[\ref{SFT1}] All   relations for equivariant maps, i.e. the elements of the kernels of the maps  $\pi_n: T\langle X\rangle^{\otimes n}\ltimes \mathbb Q[S_n]\to \mathcal T_{X}^n(V)$   for all $n$, can be {\em formally deduced}   from     the antisymmetrizer $\sum_{\sigma\in S_{d+1}}\epsilon_\sigma\sigma=\mathfrak C_{0,d}(x)$ and $tr(1)=d$.
    \end{theorem*}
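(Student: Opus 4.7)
The plan is to invoke the previous Theorem \ref{SFT} and then use the inductive Formula \eqref{eFine} to show that each of the tensor Cayley--Hamilton identities $\mathfrak{C}_{k,d}(x)$ with $k \geq 1$ can be formally deduced from $\mathfrak{C}_{0,d}(x)$ together with $tr(1)=d$. The argument is by induction on $k$, carried out across the family of symbolic algebras $T\langle X\rangle^{\otimes n} \ltimes \mathbb{Q}[S_n]$, with $n = d+1-k$ shrinking by one at each step.

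First, by Theorem \ref{SFT}, every element of $\ker \pi_n$, for every $n$, already lies in the formal ideal (closed under products, substitutions, polarization, traces, and partial traces) generated by the finite list $\{\mathfrak{C}_{k,d}(x)\}_{k=0}^{d+1}$ and by $tr(1)=d$. Consequently it suffices to exhibit, for each $k \geq 1$, a formal derivation of $\mathfrak{C}_{k,d}(x)$ from $\mathfrak{C}_{0,d}(x)$ and $tr(1)=d$ using the same palette of operations, including in particular the formal partial trace $\mathtt{t}$ of Proposition \ref{passs1}.

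The inductive engine is exactly Formula \eqref{eFine}. Suppose $\mathfrak{C}_{k,d}(x)$ has been produced in the formal ideal generated by $\mathfrak{C}_{0,d}(x)$ and $tr(1)=d$, sitting in $T\langle X\rangle^{\otimes n} \ltimes \mathbb{Q}[S_n]$ with $n = d+1-k$. Multiplying on the right by the element $1^{n-1}\otimes x$ of the symbolic algebra and applying $\mathtt{t}$, Formula \eqref{eFine} gives
\[
\mathtt{t}\bigl(\mathfrak{C}_{k,d}(x)\cdot (1^{n-1}\otimes x)\bigr) = -(k+1)\,\mathfrak{C}_{k+1,d}(x).
\]
Since we are in characteristic zero the scalar $-(k+1)$ is invertible, so $\mathfrak{C}_{k+1,d}(x)$ also lies in the formal ideal. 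The generator $tr(1)=d$ genuinely enters at each step, because whenever $\mathtt{t}$ contracts a tensor slot occupied by $1$, it produces the scalar $tr(1)=d$; without this identification as a scalar, $\mathtt{t}$ would not yield an honest element of $T\langle X\rangle^{\otimes n-1} \ltimes \mathbb{Q}[S_{n-1}]$.

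Iterating this step $d+1$ times starting from $\mathfrak{C}_{0,d}(x)$ produces $\mathfrak{C}_{1,d}(x),\mathfrak{C}_{2,d}(x),\ldots,\mathfrak{C}_{d+1,d}(x)$ in turn, after which Theorem \ref{SFT} closes the argument for all $n$. The main delicate point, and where I expect the actual work to sit, is verifying that $\mathtt{t}$ is a legitimate operation within the notion of ``formal deduction'' employed in Theorem \ref{SFT} — that is, that applying $\mathtt{t}$ to a formal consequence of given identities again yields a formal consequence, with no extraneous contributions. This compatibility is morally built into the definitions of $\pi_n$ and $\mathtt{t}$ in \S\ref{formule}, but spelling it out cleanly is exactly what makes this theorem a genuine refinement of Theorem \ref{SFT} rather than a formality.
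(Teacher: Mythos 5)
Your proof matches the paper's: reduce via Theorem \ref{SFT} to deriving the identities $\mathfrak C_{k,d}(x)$, then obtain them recursively from the antisymmetrizer $\mathfrak C_{0,d}(x)$ by iterating Formula \eqref{eFine} with the scalar $-(k+1)$ inverted. The ``delicate point'' you flag about whether $\mathtt t$ is a legitimate deduction step is resolved in the paper definitionally rather than by a further verification: Definition \ref{Tid1} extends the notion of $T$-ideal precisely by requiring closure under the formal partial trace $\mathtt t$, so the iteration stays within the admissible rules by fiat.
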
 Recall that $\sum_{\sigma\in S_{d+1}}\epsilon_\sigma\sigma\in \mathbb Q[S_{d+1}]\subset  T\langle X\rangle^{\otimes d+1}\ltimes \mathbb Q[S_{d+1}]$.
    
    The meaning of the expression     {\em formally deduced}  is that of Universal algebra and $T$--ideals  as explained in detail in Definitions \ref{Tid} as modified in \ref{Tid1}.    This method is at the heart of the present paper.

This sheds a new light even on the very classical Cayley--Hamilton identity which is recursively constructed  from the antisymmetrizer by Formula  \eqref{eFine}.\medskip

 Before approaching  these new Theorems \ref{lmuin} and \ref{SFT}, \ref{SFT1} let us recall how the first and second fundamental theorems of invariant theory appear classically.\medskip

\subsection{The classical Theory}
The first and second fundamental theorems of invariant theory appear in H. Weyl's book {\em The classical groups} \cite{Weyl} as a basic tool to understand the representation Theory of classical groups.  One remarkable feature of these Theorems is that they are ubiquitous, they appear classically in at least 3 different and apparently unrelated forms (see the next section).

The theorems were first developed in characteristic 0  and then extended, with considerable effort, to all characteristics, see \cite{depr},\ 
 \cite{Don},\ 
 \cite{Zubkov1},\ 
 \cite{depr0}.

In this paper we present yet another form of these theorems and  will restrict to characteristic 0 (but see the very last comment).
We will also develop the theory only for the general linear group $GL(V)$, that is the group of all linear transformations of a vector space $V$   finite dimensional over a field $F$ (which one can assume to  be $\mathbb Q$).  

For the other classical groups similar results hold (one may for instance approach them as in \cite{LZ})  and will be treated elsewhere.

\subsubsection{The first   fundamental theorem}The group $GL(V)$ acts on $V$, its defining representation,  and on its dual $V^*$  by the dual action. By convention we will write {\em vectors} in $V$ with Roman letters while {\em covectors} in $V^*$ with Greek letters. We use the bracket notation; for $\phi\in V^*,\ v\in V$ we write often $\langle \phi\mid v\rangle:=\phi(v)$, so that the dual action is given by the formula
\begin{equation}\label{dac}
\langle g\phi\mid v\rangle:=\langle \phi\mid g^{-1}v\rangle,\ \forall g\in  GL(V),\ v\in V,\ \phi\in V^*.
\end{equation} In other words  $\langle g\phi\mid gv\rangle =\langle \phi\mid  v\rangle,\ \forall g\in  GL(V)$ that is the function  $\langle \phi\mid  v\rangle$ of $v$ and $\phi$ is invariant.  For this group the {\em first fundamental theorem, FFT for short,} states that the previous functions generate all invariants   of several copies of $V$ and $V^*$. That is denoting
$$(v_1,v_2,\ldots,v_h;\phi_1,\phi_2,\ldots,\phi_k)\in V^{\oplus h}\oplus (V^*)^{\oplus k}.$$
\begin{theorem}\label{fft}
The polynomial functions on $V^{\oplus h}\oplus (V^*)^{\oplus k}$  which are   $GL(V)$ invariant are generated by the $h\cdot k$ basic functions  $\langle \phi_i\mid  v_j\rangle,\ i=1,\ldots,k;$ $ j=1,\ldots,h$.
\end{theorem}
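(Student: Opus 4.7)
The plan is to reduce Theorem \ref{fft} to a Schur--Weyl duality computation in the multilinear case, using the standard polarization/restitution machinery available in characteristic $0$.

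Since $GL(V)$ is reductive in characteristic $0$, the algebra of invariants decomposes as a direct sum of its multihomogeneous invariant components, and it is enough to determine those. By polarization one recovers any multihomogeneous invariant from a multilinear invariant in a larger number of copies of $V$ and $V^*$, while restitution requires only division by integers. Consequently the theorem is reduced to describing the space of multilinear invariants
$$I_{h,k}:=\bigl((V^*)^{\otimes h}\otimes V^{\otimes k}\bigr)^{GL(V)}.$$
Next I would note that $I_{h,k}=0$ unless $h=k$, because the central subgroup of scalar matrices $F^{\times}\subset GL(V)$ acts on $(V^*)^{\otimes h}\otimes V^{\otimes k}$ by the character $t\mapsto t^{k-h}$, so invariance forces $h=k=n$. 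In this case there is a canonical isomorphism
$$\bigl((V^*)^{\otimes n}\otimes V^{\otimes n}\bigr)^{GL(V)}\;\simeq\;\mathrm{End}_{GL(V)}\bigl(V^{\otimes n}\bigr),$$
which reduces the problem to describing the commutant of $GL(V)$ on $V^{\otimes n}$.

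By classical Schur--Weyl duality this commutant is spanned by the action of the symmetric group $S_n$ permuting tensor factors. Unwinding the identification, each $\sigma\in S_n$ corresponds (up to convention) to the multilinear function
$$(v_1,\ldots,v_n;\phi_1,\ldots,\phi_n)\;\longmapsto\;\prod_{i=1}^n \langle \phi_i\mid v_{\sigma(i)}\rangle,$$
which is a monomial in the basic pairings. Thus every multilinear invariant, and so by restitution every invariant, is a polynomial in $\{\langle \phi_i\mid v_j\rangle\}$, as claimed.

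The main obstacle is the Schur--Weyl step: proving that $\mathrm{End}_{GL(V)}(V^{\otimes n})$ is exactly the image of $F[S_n]$. The cleanest route in characteristic $0$ is the double centralizer theorem applied to the semisimple action of $F[S_n]$ on $V^{\otimes n}$; equivalently one shows that the linear span of $\{g^{\otimes n}:g\in GL(V)\}$ in $\mathrm{End}(V^{\otimes n})$ equals the commutant of $S_n$, which follows from Zariski density of $GL(V)$ in $\mathrm{End}(V)$ combined with the polarization identity expressing an arbitrary $A^{\otimes n}$ as a linear combination of $n$-th tensor powers of invertible matrices. The remaining ingredients---reductivity, polarization, and the central character argument---are formal in characteristic zero and do not pose any real difficulty.
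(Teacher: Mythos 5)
The paper does not actually prove Theorem \ref{fft}; it recalls it as classical background from Weyl's book and only remarks that it is equivalent to Theorem \ref{fft1}, so there is no proof in the paper to compare against. Your proof is correct and takes exactly the route the paper alludes to: polarization and restitution reduce to multilinear invariants, the central character of $F^{\times}\subset GL(V)$ forces $h=k=n$, the canonical identification $\bigl((V^*)^{\otimes n}\otimes V^{\otimes n}\bigr)^{GL(V)}\simeq End_{GL(V)}(V^{\otimes n})$ transfers the problem to the commutant, and Schur--Weyl duality (via the double centralizer theorem, with the span of $\{g^{\otimes n}:g\in GL(V)\}$ equal to the commutant of $S_n$ by polarization and Zariski density) finishes it. This is also the same circle of ideas the paper itself uses when it proves Theorem \ref{lmuin}, where the pairing $tr(A_1\otimes\cdots\otimes A_m\circ B_1\otimes\cdots\otimes B_m)$ identifies multilinear matrix invariants with $End_{GL(V)}(V^{\otimes m})$ and Theorem \ref{fft1} is then invoked. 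So your argument is both correct and faithful to the framework the paper implicitly endorses.
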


One of the remarkable features of the Theory as presented by H. Weyl is the fact that this Theorem is equivalent to  a second Theorem, with $h=k$.
\begin{theorem}\label{fft1}
The algebra of linear operators  on $V^{\otimes h} $  which commute with the diagonal action of   $GL(V)$  is generated by the  elements of the symmetric group $S_h$. The two actions are
\begin{equation}\label{twa}
g\cdot (v_1\otimes v_2\otimes \ldots\otimes v_h)= gv_1\otimes gv_2\otimes \ldots\otimes gv_h,\qquad \qquad g\in GL(V),\qquad \end{equation}\begin{equation}\label{twa1}\!\sigma\cdot(v_1\otimes v_2\otimes \ldots\otimes v_h)= v_{\sigma^{-1}(1)}\otimes v_{\sigma^{-1}(2)}\otimes \ldots\otimes v_{\sigma^{-1}(h)};\qquad \sigma\in S_h.
\end{equation}
\end{theorem}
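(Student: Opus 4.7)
The plan is to derive Theorem~\ref{fft1} from the first fundamental Theorem~\ref{fft}, exploiting the Schur--Weyl style equivalence alluded to just before the statement. The starting point is the canonical isomorphism of $GL(V)$-modules
$$End(V^{\otimes h}) \;\cong\; V^{\otimes h}\otimes (V^*)^{\otimes h},$$
obtained by identifying $End(W)\cong W\otimes W^*$ and permuting tensor factors. Taking $GL(V)$-invariants on both sides reduces the problem to describing $\bigl(V^{\otimes h}\otimes (V^*)^{\otimes h}\bigr)^{GL(V)}$.

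Next I would reinterpret this space as the space of multilinear $GL(V)$-invariant forms on $V^{\oplus h}\oplus (V^*)^{\oplus h}$ that are linear in each of the $h$ vector and $h$ covector arguments separately, i.e.\ of multidegree $(1,\ldots,1;1,\ldots,1)$. Applying Theorem~\ref{fft} with $k=h$, any polynomial invariant on $V^{\oplus h}\oplus (V^*)^{\oplus h}$ is a polynomial in the pairings $\langle \phi_i\mid v_j\rangle$; the only monomials of the required multidegree are those in which every $\phi_i$ and every $v_j$ appears exactly once, namely $\prod_{i=1}^{h}\langle \phi_i\mid v_{\sigma(i)}\rangle$ for $\sigma\in S_h$. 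These therefore span the space of multilinear invariants.

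The third step is to unwind the isomorphism of the first step and verify that, in a basis $e_1,\ldots,e_d$ of $V$ with dual basis $e^1,\ldots,e^d$, the invariant tensor
$$T_\sigma \;=\; \sum_{a_1,\ldots,a_h} e_{a_{\sigma^{-1}(1)}}\otimes\cdots\otimes e_{a_{\sigma^{-1}(h)}}\otimes e^{a_1}\otimes\cdots\otimes e^{a_h}$$
corresponds to the monomial $\prod_i\langle \phi_i\mid v_{\sigma^{-1}(i)}\rangle$ and acts on $V^{\otimes h}$ precisely as the permutation $w_1\otimes\cdots\otimes w_h\mapsto w_{\sigma^{-1}(1)}\otimes\cdots\otimes w_{\sigma^{-1}(h)}$ of formula~\eqref{twa1}. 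Conversely, every such permutation operator manifestly commutes with the diagonal action~\eqref{twa}, so the span of $S_h$ lies in $End(V^{\otimes h})^{GL(V)}$; combining both inclusions gives the theorem.

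The main obstacle is really only this last step: the bookkeeping that matches each multilinear invariant $\prod_i\langle \phi_i\mid v_{\sigma(i)}\rangle$ to the correct permutation of tensor factors, keeping the convention on $\sigma$ versus $\sigma^{-1}$ consistent with~\eqref{twa1}. Everything else is a clean application of Theorem~\ref{fft}, and no additional invariant theory is used.
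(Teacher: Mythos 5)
Your derivation is correct and is essentially the standard Schur--Weyl reduction. The paper itself does not prove Theorem~\ref{fft1}; it is stated as a classical fact from Weyl, and the text only remarks (just before the statement) that Theorem~\ref{fft} and Theorem~\ref{fft1} are equivalent. What you have done is precisely to realize one direction of that equivalence in detail: identify $End(V^{\otimes h})\cong V^{\otimes h}\otimes(V^*)^{\otimes h}$ as a $GL(V)$-module, pass to invariants, recognize those invariants as the multilinear degree-$(1,\dots,1;1,\dots,1)$ component of the ring described by Theorem~\ref{fft}, and observe that the only monomials of that multidegree in the generators $\langle\phi_i\mid v_j\rangle$ are indexed by permutations $\sigma\in S_h$, then translate each such monomial back into the permutation operator of Formula~\eqref{twa1}. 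That is the same mechanism the paper has in mind when it calls the two statements equivalent, so there is nothing genuinely different here, only an explicit write-up of a proof the paper chose to omit.

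One small caution on the bookkeeping you flag as the main obstacle: the passage from an invariant tensor $t\in V^{\otimes h}\otimes(V^*)^{\otimes h}$ to a multilinear invariant function on $V^{\oplus h}\oplus(V^*)^{\oplus h}$ involves a choice of pairing (the natural dual of $V^{\otimes h}\otimes(V^*)^{\otimes h}$ is $(V^*)^{\otimes h}\otimes V^{\otimes h}$, and one must fix how the $h$ vector slots match the $h$ covector slots). Different such choices shift $\sigma$ to $\sigma^{-1}$ in the dictionary, so your claim that $T_\sigma$ corresponds simultaneously to the monomial $\prod_i\langle\phi_i\mid v_{\sigma^{-1}(i)}\rangle$ and to the operator of~\eqref{twa1} is correct only after the pairing convention is pinned down. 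Since the statement only asserts that the permutations \emph{span} the commutant, and the set $\{\sigma\}$ equals the set $\{\sigma^{-1}\}$, this does not affect the conclusion; it would only matter if you later needed the precise $\sigma\leftrightarrow\sigma^{-1}$ dictionary (as the paper does in Lemma~\ref{coss} and Formula~\eqref{carsv}), in which case the convention should be made explicit.
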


 \subsubsection{The second  fundamental theorem}
 Together with a first fundamental theorem one has a {\em second fundamental theorem, SFT for short}. In this theorem one describes the relations among the invariants as generated by basic ones.
 
 For the setting of Theorem \ref{fft} one should think of 
 $$V^{\oplus h}=\hom(F^h,V),\  (V^*)^{\oplus k}=\hom(V,F^k) .$$ Then the invariants  $\langle \phi_i\mid  v_j\rangle$ are the entries of the  $h\times k$ matrix  image of the map
 $$\hom(F^h,V)\times \hom(V,F^k) \to \hom(F^h, F^k) ,\  (A,B)\mapsto B\circ A. $$            
 The image of this map is the variety of matrices of rank  $\leq d:=\dim V$. 
 
 The polynomial functions on $ \hom(F^h,  F^k)$ are in the variables $x_{i,j}$  which can be viewed as entries of a {\em generic matrix $X$}  and the SFT is:
 \begin{theorem}\label{sst}
The ideal of functions on the space of $h\times k$ matrices vanishing on the matrices of rank $\leq d$ is nonzero if and only if $d<\min(h,k)$. 

In this case it is generated by the determinants of all the $d+1\times d+1$  minors of the matrix $X$. 
\end{theorem}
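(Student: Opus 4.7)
The plan is to leverage Theorem~\ref{fft} (FFT) to identify the coordinate ring of the determinantal variety $D_d=\{M\in\hom(F^h,F^k):\mathrm{rk}\,M\le d\}$ with a $GL(V)$-invariant ring, and then use $GL_h\times GL_k$ representation theory to pin down the ideal. The inclusion ``minors $\subseteq$ ideal of $D_d$'' is immediate: if $\mathrm{rk}\,M\le d$, every $(d+1)\times(d+1)$ submatrix of $M$ is singular. If $d\ge\min(h,k)$ there are no such minors and $D_d$ is the whole space, so both ideals are zero. Assume henceforth $d<\min(h,k)$.

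The first substantive step is to realize $D_d$ as the image of the multiplication map
$$\mu:\hom(F^h,V)\times\hom(V,F^k)\to\hom(F^h,F^k),\qquad(A,B)\mapsto B\circ A,$$
where $\dim V=d$. This map is surjective onto $D_d$ and $GL(V)$-equivariant. Pulling back, $\mu^{*}(x_{ij})=\langle\phi_i\mid v_j\rangle$, and by Theorem~\ref{fft} the image of $\mu^{*}$ is all of $F[\hom(F^h,V)\times\hom(V,F^k)]^{GL(V)}$. Hence $\ker\mu^{*}$ equals the (radical) ideal $I$ of $D_d$, since $D_d$ is the closure of the image of an irreducible variety under a dominant morphism onto $D_d$.

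Next I would compare the two sides as $GL_h\times GL_k$-modules. By the Cauchy decomposition,
$$F[\hom(F^h,F^k)]=\bigoplus_\lambda S_\lambda(F^h)^{*}\otimes S_\lambda(F^k),$$
summed over partitions $\lambda$ with $\le\min(h,k)$ parts, while on the $(A,B)$ side one gets an analogous double decomposition in which taking $GL(V)$-invariants forces $\lambda=\mu$ and $\lambda_{d+1}=0$ (since $S_\lambda V=0$ once $\lambda$ has more than $d$ parts). The $GL_h\times GL_k$-equivariant map $\mu^{*}$ is therefore, by Schur's lemma, an isomorphism on the components with $\lambda_{d+1}=0$ and zero on the rest. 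This identifies
$$I=\ker\mu^{*}=\bigoplus_{\lambda:\lambda_{d+1}\ge 1}S_\lambda(F^h)^{*}\otimes S_\lambda(F^k).$$

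It remains to show that the ideal $J$ generated by the $(d+1)\times(d+1)$ minors already hits every such isotypic component. The minors span the irreducible piece $S_{1^{d+1}}(F^h)^{*}\otimes S_{1^{d+1}}(F^k)$ indexed by $\lambda=(1^{d+1})$. For any $\lambda$ with $\lambda_{d+1}\ge 1$, the first column of $\lambda$ has length $\ge d+1$, so using the straightening algorithm of Doubilet--Rota--Stein (or, equivalently, repeated application of Pieri's rule to products of a $(d+1)$-minor with shorter minors) one writes a highest-weight vector of $S_\lambda(F^h)^{*}\otimes S_\lambda(F^k)$ as a product of a $(d+1)$-minor with standard bitableaux of smaller shape. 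By $GL_h\times GL_k$-equivariance, the entire component lies in $J$, giving $J\supseteq I$ and hence equality. The main obstacle is precisely this last step: producing every isotypic piece with $\lambda_{d+1}\ge 1$ as an explicit ideal-theoretic combination of $(d+1)$-minors, which is where straightening (or an RSK-type basis of standard bitableaux) does the essential combinatorial work.
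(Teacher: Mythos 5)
The paper states Theorem \ref{sst} only as classical background and does not prove it; it is quoted as the geometric face of the SFT, with the remark that it is the first of a long list of results on special singularities and with the actual treatment deferred to the literature (see \cite{depr}, \cite{depr0}). So there is no in-paper argument to compare against. What you outline is a correct version of the standard representation-theoretic proof, and it is essentially the one the author develops elsewhere.

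One step is slightly misstated and worth tightening. The highest-weight vector of the isotypic piece $S_\lambda(F^h)^*\otimes S_\lambda(F^k)$ is the product of the upper-left initial minors of sizes $\lambda'_1\geq\lambda'_2\geq\cdots$ (the column lengths of $\lambda$), and $\lambda_{d+1}\geq 1$ is the same as $\lambda'_1\geq d+1$; so the leading factor is an $e\times e$ minor with $e\geq d+1$, not necessarily a literal $(d+1)\times(d+1)$ minor. You therefore need the elementary observation, via Laplace expansion, that every minor of size $\geq d+1$ already lies in the ideal $J$ generated by the $(d+1)$-minors. Once this highest-weight vector is in $J$, you do not need the full Doubilet--Rota--Stein straightening machinery in characteristic $0$: since $F[\hom(F^h,F^k)]$ is multiplicity-free as a $GL_h\times GL_k$-module and $J$ is a subrepresentation, containing one nonzero vector of an isotypic component forces containment of the whole component, which is exactly what your Schur-lemma computation of $\ker\mu^*$ requires. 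Straightening and standard bitableaux become indispensable when one wants the statement over $\mathbb Z$ or in positive characteristic (the setting of \cite{depr}); over $\mathbb Q$, where the paper works, the highest-weight argument you sketch closes the gap you flag as the main obstacle.
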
 This is just the first of a long list of ideas and theorems of geometric nature on special singularities.\smallskip

Instead for the setting  of Theorem \ref{fft1} one has 
 
\begin{theorem}\label{sst1} Consider the mapping $\pi: F[S_h]\to End(V^{\otimes h })$ from the group algebra to the linear operators, given by Formula \eqref{twa1}.  

 The kernel of $\pi$  is nonzero only if $\dim V=d<h$ and then it is the two sided ideal of $F[S_h]$ generated by an antisymmetrizer $\sum_{\sigma\in S_{d+1}}\epsilon_\sigma\sigma$.

\end{theorem}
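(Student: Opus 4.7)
The plan is to combine Theorem~\ref{fft1} with the Artin--Wedderburn decomposition of $F[S_h]$ to describe $\ker\pi$ as a sum of matrix blocks, and then identify this sum with the two--sided ideal $(a_{d+1})$ generated by the antisymmetrizer $a_{d+1}:=\sum_{\sigma\in S_{d+1}}\epsilon_\sigma\sigma$ via a Young symmetrizer computation.

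First I would record that in characteristic zero one has $F[S_h]=\bigoplus_{\lambda\vdash h}\mathrm{End}(S^\lambda)$, where $S^\lambda$ is the irreducible Specht module. By Theorem~\ref{fft1} the image of $\pi$ coincides with the full $GL(V)$--commutant in $\mathrm{End}(V^{\otimes h})$; since $GL(V)$ acts semisimply, the double centralizer theorem yields a $(GL(V),S_h)$--bimodule decomposition
\[
V^{\otimes h}=\bigoplus_{\lambda\vdash h}W_\lambda\otimes S^\lambda,
\]
with $\pi$ annihilating exactly those blocks $\mathrm{End}(S^\lambda)$ for which $W_\lambda=0$. The next task is the dichotomy $W_\lambda=0\Longleftrightarrow\ell(\lambda)>d$. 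For $\ell(\lambda)>d$, the Young symmetrizer $c_T=b_Ta_T$ of a tableau $T$ of shape $\lambda$ contains, inside $a_T$, the antisymmetrizer on the first column of length $\ell(\lambda)\ge d+1$, which kills $V^{\otimes h}$ because any $d+1$ vectors in $V$ are linearly dependent; for $\ell(\lambda)\le d$ one exhibits a tensor built from distinct basis vectors filled column--wise in $T$ on which $c_T$ acts nontrivially. Hence $\ker\pi=\bigoplus_{\ell(\lambda)>d}\mathrm{End}(S^\lambda)$, which is nonzero exactly when some $\lambda\vdash h$ has more than $d$ rows, i.e.\ when $h\ge d+1$.

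Next I would identify $\ker\pi$ with the two--sided ideal $(a_{d+1})$. The inclusion $(a_{d+1})\subseteq\ker\pi$ is the vanishing used above. For the reverse, fix $\lambda$ with $\ell(\lambda)>d$ and a tableau $T$ of shape $\lambda$. The antisymmetrizer on the first column of $T$, of length $\ell\ge d+1$, can be rewritten by splitting its symmetric group $S_\ell$ into left cosets of a subgroup $S_{d+1}$ fixing any $d+1$ of the column entries:
\[
\sum_{\sigma\in S_\ell}\epsilon_\sigma\sigma=\Bigl(\sum_\tau\epsilon_\tau\tau\Bigr)\cdot a_{d+1}^{(T)},
\]
where $a_{d+1}^{(T)}$ is $S_h$--conjugate to $a_{d+1}$. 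Therefore $a_T$, and hence $c_T=b_Ta_T$, lies in $(a_{d+1})$. Since in characteristic zero the primitive quasi--idempotent $c_T$ generates the minimal two--sided ideal $\mathrm{End}(S^\lambda)$, we obtain $\mathrm{End}(S^\lambda)\subseteq(a_{d+1})$ for every $\lambda$ with $\ell(\lambda)>d$, which matches $\ker\pi$ exactly.

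The main conceptual point is the dichotomy $W_\lambda=0\Longleftrightarrow\ell(\lambda)>d$, essentially classical Schur--Weyl duality; everything else is the coset computation above together with the standard fact that in characteristic zero $c_T$ generates its ambient minimal two--sided ideal.
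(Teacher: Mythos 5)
The paper does not prove Theorem~\ref{sst1}; it is stated as classical background (Schur--Weyl duality together with the description of the kernel of the $S_h$--action on tensor space) and later results take it as given, so there is no internal proof to compare against. Your argument is the standard one and it is correct. You use the Artin--Wedderburn decomposition $F[S_h]=\bigoplus_\lambda\mathrm{End}(S^\lambda)$, identify $\ker\pi$ as the sum of blocks with $W_\lambda=0$ via the $(GL(V),S_h)$--bimodule decomposition of $V^{\otimes h}$, establish the dichotomy $W_\lambda=0\iff\ell(\lambda)>d$ through the Young quasi-idempotent $c_T$, and then show $\mathrm{End}(S^\lambda)\subseteq(a_{d+1})$ for $\ell(\lambda)>d$ by the left-coset factorization $\sum_{\sigma\in S_\ell}\epsilon_\sigma\sigma=(\sum_\tau\epsilon_\tau\tau)\,a_{d+1}^{(T)}$ of the long column antisymmetrizer, combined with the fact that in characteristic zero $c_T$ generates the minimal two-sided ideal $\mathrm{End}(S^\lambda)$. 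Two small points worth making explicit in a final write-up: first, each block $\mathrm{End}(S^\lambda)$ is a simple ring, so $\pi$ restricted to it is either injective or zero, which is what upgrades the single vanishing $\pi(c_T)=0$ to the vanishing of the whole block; second, the easy inclusion $(a_{d+1})\subseteq\ker\pi$ rests on $\bigwedge^{d+1}V=0$, which is exactly what the paper itself records as the source of Formula~\eqref{basi}. This route, through Young symmetrizers, is worth contrasting with the more combinatorial ``splitting the cycles'' technique the paper develops in Proposition~\ref{spc} for the harder Theorem~\ref{SFT}: the representation--theoretic proof is shorter here because the target is a two-sided ideal of a semisimple algebra, whereas the paper's later combinatorics is designed for the twisted algebras $T\langle X\rangle^{\otimes n}\ltimes\mathbb{Q}[S_n]$ where no such block decomposition is available.
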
Here $S_{d+1}\subset S_{h}$ and $\epsilon_\sigma=\pm 1$ is the sign of the permutation.
This is just the first of a long list of ideas and theorems in representation Theory.\smallskip

\subsection{Matrix invariants}There is still a third way in which the fundamental Theorems appear. 

In this case we take as basic representation  the direct sum of $h$ copies  of the  matrix algebra $End(V)$  of linear maps of $V$, under simultaneous conjugation by   $GL(V)$. Start with a remark.
%
  \begin{remark}\label{gents}
Given a vector space $W$ the symmetric group $S_h$ acts on $W^{\otimes h}$  by the formula \eqref{twa1}
thus  $S_h\subset End(W^{\otimes h})=End(W)^{\otimes h}$. Thus we have a priori two actions of $S_n$ on $End(W)^{\otimes h}$, the one given by Formula \eqref{twa1} thinking of $End(W)^{\otimes h}$ as tensors and the conjugation action  in $End(W)^{\otimes h}$  as algebra. If  $\sigma\in S_h,$ is a permutation of the tensor indices we have for a tensor  $a_1\otimes\ldots\otimes a_h\in M_d^{\otimes h}.$
 \begin{equation}\label{alco}
\sigma \circ a_1\otimes\ldots\otimes a_h \circ  \sigma^{-1}=a_ {\sigma^{-1}(1)}\otimes\ldots\otimes a_ {\sigma^{-1}(h)}=\sigma \cdot a_1\otimes\ldots\otimes a_h .
\end{equation} Formula \eqref{alco} states that these two actions coincide, cf. Formula \eqref{formuu0}.
\end{remark}

We need to recall the multilinear
invariants of $m$ matrices, i.e. the invariant elements of the dual of  $End(V)^{\otimes m}$.
The  theorem  is, cf. \cite{kostant}: 
\begin{theorem}\label{lmuin} The space of  multilinear invariants of $m$ endomorphisms $  (x_1,x_2,\dots,x_m)$ of a  $d$--dimensional vector space $V$   is identified with the space   $End_{GL(V)}(V ^{\otimes m})$ and it will be denoted by  $\mathcal T_d(m)$. It is linearly spanned
by the functions:
\begin{equation}\label{carsv}
t_\sigma     (x_1,x_2,\dots,x_m):=tr(\sigma^{-1}\circ x_1\otimes x_2\otimes\dots\otimes   x_m),\ \sigma\in S_m.
\end{equation}
If $\sigma=(i_1i_2\dots i_h)\dots (j_1j_2\dots j_\ell)(s_1s_2\dots s_t)$ is the cycle
decomposition of $\sigma$ then  we have that
$t_\sigma     (x_1,x_2,\dots,x_m)$ equals \begin{equation}
\label{phis}=tr(x_{i_1}x_{i_2}\dots x_{i_h})\dots tr(x_{j_1}x_{j_2}\dots x_{j_\ell})
tr(x_{s_1}x_{s_2}\dots x_{s_t}).
\end{equation}
\end{theorem}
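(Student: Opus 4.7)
The plan is to reduce the statement to the Schur--Weyl duality already recorded as Theorem \ref{fft1}, and then to unwind the identifications to recover both formula \eqref{carsv} and its factorization \eqref{phis} along cycles. The starting point is that a multilinear $GL(V)$-invariant map $End(V)^{\oplus m}\to F$ is the same thing as a $GL(V)$-invariant element of $(End(V)^{\otimes m})^*$.

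I would first identify this dual space with $End(V)^{\otimes m}$ itself using the non-degenerate, $GL(V)$-invariant bilinear form $(A,B)\mapsto tr(AB)$ on $End(V)$: taking tensor powers yields a $GL(V)$-equivariant isomorphism $(End(V)^{\otimes m})^*\simeq End(V)^{\otimes m}\simeq End(V^{\otimes m})$. By Remark \ref{gents}, taking $GL(V)$-invariants in $End(V^{\otimes m})$ under conjugation produces exactly $End_{GL(V)}(V^{\otimes m})$. Under this chain of identifications an operator $T$ corresponds to the multilinear invariant $(x_1,\ldots,x_m)\mapsto tr(T\circ x_1\otimes\cdots\otimes x_m)$. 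Theorem \ref{fft1} then asserts that $End_{GL(V)}(V^{\otimes m})$ is spanned by the image of the symmetric group $S_m$; substituting $T=\sigma^{-1}$ yields the spanning family \eqref{carsv}.

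To derive \eqref{phis}, I would compute $t_\sigma$ in a basis $\{e_a\}$ of $V$. For a single cycle $\sigma=(i_1\,i_2\,\cdots\,i_h)$, a direct calculation of the diagonal entries of $\sigma^{-1}\circ x_1\otimes\cdots\otimes x_m$ gives
$$t_\sigma(x_1,\ldots,x_m)=\sum_{a_1,\ldots,a_h}(x_{i_1})_{a_1a_2}(x_{i_2})_{a_2a_3}\cdots(x_{i_h})_{a_ha_1}=tr(x_{i_1}x_{i_2}\cdots x_{i_h}),$$
with the cyclic pattern of indices coming from the cycle structure of $\sigma^{-1}$. For a general permutation, disjoint cycles act on disjoint tensor factors, so the trace factorizes as a product over cycles, giving \eqref{phis}.

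The substantive content is Theorem \ref{fft1}, so no serious obstacle remains; the main care is in bookkeeping of dualities and in the choice of $\sigma^{-1}$ rather than $\sigma$ in \eqref{carsv}, which is needed so that the cycle $(1,2,\ldots,h)$ produces $tr(x_1x_2\cdots x_h)$ in the correct multiplicative order.
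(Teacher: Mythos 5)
Your argument is correct and follows essentially the same route as the paper: identify the dual of $End(V)^{\otimes m}$ with $End(V)^{\otimes m}$ via the trace pairing, invoke Theorem \ref{fft1} (Schur--Weyl) to see that the invariants are spanned by $S_m$, and then verify \eqref{phis} by a direct reduction-to-cycles computation. The only cosmetic difference is in the last step: the paper evaluates on rank-one endomorphisms $u\otimes\phi$ via Lemma \ref{coss}, whereas you compute matrix entries in a chosen basis; both are equivalent elementary verifications of the same identity.
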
 
 \begin{proof}  We recall the standard proof since this is a basic ingredient of this paper. 
 
 First remark
that  the dual of  $End(V)^{\otimes m}$ can be identified, in a $GL(V)$ equivariant way   to   $End(V)^{\otimes m}$ by
the pairing formula:
$$\langle A_1\otimes A_2\dots\otimes A_m| B_1\otimes B_2\dots\otimes B_m\rangle:=tr(A_1\otimes A_2\dots\otimes A_m\circ
 B_1\otimes B_2\dots\otimes B_m)$$$$=\prod tr(A_iB_i).$$

Under this isomorphism the multilinear invariants of matrices are identified with the $GL(V)$  invariants
of  $End(V)^{\otimes m}$ which in turn are spanned by the elements of the symmetric group, Theorem \ref{fft1}, hence by the elements of Formula \eqref{carsv}.

As for Formula \eqref{phis}, since the identity is multilinear, in the variabkes $x$, it is enough to prove it on   the
decomposable tensors of $End(V)=V\otimes V^*$ which are the endomorphisms of rank 1, i.e.  $u\otimes \phi: v\mapsto \langle\phi\,|\,v\rangle u$. We use the following basic formulas.   
\begin{lemma}\label{coss}
$$ \sigma^{-1} u_1\otimes \phi_1\otimes u_2\otimes \phi_2\otimes\ldots\otimes u_m\otimes \phi_m= u_{\sigma  (1)}\otimes \phi_1\otimes u_{\sigma  (2)}\otimes \phi_2\otimes \ldots \otimes u_{\sigma  (m)}\otimes \phi_m
  $$\begin{equation}\label{formuu0}  u_1\otimes \phi_1\otimes  \ldots\otimes u_m\otimes \phi_m\circ \sigma=  u_1\otimes \phi_{\sigma(1)}\otimes u_2\otimes \phi_{\sigma(2)}\otimes\ldots\otimes u_m\otimes \phi_{\sigma(m)}
\end{equation} 
\end{lemma}
\begin{proof}
Given $x_i:=u_i\otimes \phi_i$ and an element $\sigma\in S_m$ in the symmetric group we have
$$\sigma^{-1} u_1\otimes \phi_1\otimes u_2\otimes \phi_2\otimes\ldots\otimes u_m\otimes \phi_m(v_1\otimes v_2\otimes v_m)$$
$$= \prod_{i=1}^m\langle\phi_i\,|\,v_i\rangle u_{\sigma  (1)}\otimes u_{\sigma  (2)}\otimes \ldots \otimes u_{\sigma  (m)}
$$
$$=u_{\sigma  (1)}\otimes \phi_1\otimes u_{\sigma  (2)}\otimes \phi_2\otimes \ldots \otimes u_{\sigma  (m)}\otimes \phi_m(v_1\otimes v_2\otimes v_m)$$
similarly for the other formula.
\end{proof}
 So we need to understand in matrix formulas the invariants \begin{equation}\label{formuu}
tr(\sigma^{-1} u_1\otimes \phi_1\otimes u_2\otimes \phi_2\otimes\ldots\otimes u_m\otimes \phi_m)=\prod_{i=1}^m\langle\phi_i\,|\,u_{\sigma(i)}\rangle.
\end{equation} We need to use the rules
$$u\otimes\phi\circ v\otimes \psi=u\otimes\langle\phi\,|\,v\rangle\psi,\quad tr(u\otimes \phi)=\langle\phi\,|\,u\rangle $$ from which Formula \eqref{phis} easily follows by induction.\end{proof}
\begin{remark}\label{cong}
\begin{equation}\label{cong1}
t_{\tau\sigma\tau^{-1}}     (x_1, \dots,x_m) \stackrel{\eqref{alco}}=tr(\sigma^{-1}\circ x_{\tau(1)} \otimes\dots\otimes   x_{\tau(m)})=t_{ \sigma } (x_{\tau(1)},\dots,  x_{\tau(m) }).
\end{equation}
\end{remark}
\begin{theorem}[FFT for matrices] \label{fft2} The ring $\mathcal T_d= \mathcal T_d(X)$ of invariants of $d\times d$ matrices under simultaneous conjugation is generated by
the elements
\begin{equation}\label{trinva}
tr(x_{i_1}x_{i_2}\dots x_{i_{k-1}}x_{i_k}).
\end{equation}
\end{theorem}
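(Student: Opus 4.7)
The plan is to reduce the statement to the multilinear case and invoke Theorem \ref{lmuin}. Because $\mathcal T_d$ is multigraded by the multi-degree in $x_1,\dots,x_k$, it is enough to treat a single multihomogeneous invariant $f$ of multi-degree $(d_1,\dots,d_k)$. The classical polarization procedure in characteristic $0$ will produce, starting from $f$, a multilinear invariant $\tilde f$ of $m=d_1+\cdots+d_k$ matrix variables $\{x_{i,j}:1\le i\le k,\ 1\le j\le d_i\}$ such that the restitution $x_{i,j}\mapsto x_i$ for all $j$ recovers $d_1!\cdots d_k!\cdot f(x_1,\dots,x_k)$. Since polarization and restitution are intertwining operations for the diagonal $GL(V)$-action, $\tilde f$ is itself an invariant.

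Next I would apply Theorem \ref{lmuin}: the space of multilinear invariants of $m$ matrices is linearly spanned by the functions $t_\sigma$, $\sigma\in S_m$, and Formula \eqref{phis} exhibits each $t_\sigma$ as a product of traces of monomials in the $x_{i,j}$, one factor per cycle of $\sigma$. Hence $\tilde f$ is a linear combination of products of such trace monomials. Restitution converts each factor $tr(x_{i_1,j_1}x_{i_2,j_2}\cdots x_{i_h,j_h})$ into a trace monomial $tr(x_{i_1}x_{i_2}\cdots x_{i_h})$ of the form \eqref{trinva}, so $d_1!\cdots d_k!\cdot f$, and therefore $f$ itself, lies in the subalgebra generated by the elements \eqref{trinva}.

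The only delicate point, and the one I would take care to spell out, is the polarization identity recovering a multihomogeneous $f$ from its full polarization $\tilde f$ up to the scalar $d_1!\cdots d_k!$; this is where characteristic $0$ is used and it is the only substantive ingredient beyond Theorem \ref{lmuin}. Once the polarization/restitution step is in place, the proof is essentially a repackaging of Theorem \ref{lmuin} via Formula \eqref{phis}, so I expect no further obstacle.
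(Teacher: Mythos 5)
Your argument is correct and is exactly the route the paper takes: reduce to the multilinear case by the classical polarization--restitution (Aronhold) method in characteristic $0$, then apply Theorem \ref{lmuin} together with Formula \eqref{phis} to see that every multilinear invariant is a linear combination of products of traces of monomials. You simply spell out in more detail the polarization step that the paper cites to \cite{P7} Chapter 3.
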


 This formula means that we take all possible noncommutative monomials in the $x_i$ and form their 
traces.\begin{proof} The ring of invariants of matrices contains the ring generated by the traces of monomials and both rings are stable under polarization and restitution hence, by  Aronhold
method, see \cite{P7} Chapter 3, it is enough to prove that they coincide on multilinear elements and this is the content  of the previous Lemma.\end{proof}

 Finally for the second fundamental Theorem  for matrices one should first generalize that Theorem to a statement about the non commutative algebra of equivariant maps $F:End(V)^{  h}\to  End(V)$. This non commutative algebra    is generated over the ring of invariants by the {\em coordinates } $x_i$, see \cite{agpr}. 
 
 In this case the SFT is formulated in terms of universal algebra and the language of $T$--ideals \S \ref{tide}, see \cite{P3} and \cite{R} or Chapter 12  of \cite{agpr}.

\begin{theorem}\label{sst2} If $d=\dim_FV$, all  relations among   polynomial equivariant maps $F:End(V)^{  h}\to  End(V)$ are consequences of the Cayley--Hamilton Theorem and $tr(1)=d$.

\end{theorem}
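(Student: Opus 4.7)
The plan is to reduce to multilinear identities by the Aronhold polarization--restitution method (as already used in the proof of Theorem \ref{fft2}), and then to translate the resulting multilinear problem into one about the group algebra of a symmetric group, where the classical SFT for tensor representations (Theorem \ref{sst1}) immediately identifies the kernel.

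Every $GL(V)$--equivariant polynomial map $F : End(V)^h \to End(V)$ can be polarized into multihomogeneous components and each of those fully multilinearized in additional variables; restitution recovers the original. Since the Cayley--Hamilton polynomial has coefficients that are traces of monomials, both it and the formal identity $tr(1)-d$ admit well defined polarizations, and the $T$--ideal they generate is stable under polarization and restitution. It therefore suffices to prove the claim for multilinear identities of each total degree $m$.

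A multilinear equivariant map $F : End(V)^m \to End(V)$ is an element of $\Hom_{GL(V)}(End(V)^{\otimes m}, End(V))$, which via the nondegenerate trace pairing used in the proof of Theorem \ref{lmuin} is canonically isomorphic to the space of multilinear invariants of $m+1$ matrix variables. By Theorem \ref{lmuin} this space is linearly spanned by the functions $t_\sigma$ for $\sigma \in S_{m+1}$, so we obtain a surjection from $F[S_{m+1}]$ onto the multilinear equivariants of degree $m$, whose kernel coincides with the kernel of the map $F[S_{m+1}]\to End(V^{\otimes m+1})$ of Theorem \ref{sst1}. That theorem tells us this kernel is zero when $m+1 \le d$, and for $m+1 > d$ it equals the two--sided ideal generated by the antisymmetrizer $\mathfrak a_{d+1} = \sum_{\sigma \in S_{d+1}} \epsilon_\sigma \sigma$.

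To finish, I would show that the relation corresponding to $\mathfrak a_{d+1}$, transported through the trace pairing with one of the $d+1$ tensor slots singled out as the output endomorphism, is exactly the complete multilinearization of the Cayley--Hamilton identity; the scalar $tr(1)=d$ accounts for those terms in the cycle expansion \eqref{phis} in which the identity matrix occupies a slot corresponding to a fixed point. The main obstacle is precisely this explicit combinatorial identification: rewriting $\sum_{\sigma \in S_{d+1}}\epsilon_\sigma\, t_\sigma(x_1,\ldots,x_d,y)$, with $y$ the distinguished output variable, as the polarized Cayley--Hamilton polynomial applied to $x_1,\ldots,x_d$. Once this identification is in hand, every multilinear relation lies in the $T$--ideal generated by the Cayley--Hamilton identity and $tr(1)-d$, and the reduction of the first paragraph extends the conclusion to every polynomial equivariant identity.
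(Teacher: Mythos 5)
Your proposal sets up the right outer framework, but the step you identify as the ``main obstacle'' is actually the easy part, while the genuinely hard step is dispensed with in a single sentence and is left as a gap.

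Concretely: after reducing to the multilinear case and observing that the kernel of $F[S_{m+1}]\to \Hom_{GL(V)}(End(V)^{\otimes m}, End(V))$ is the two--sided ideal generated by the antisymmetrizer $\mathfrak a_{d+1}$, you write ``Once this identification is in hand, every multilinear relation lies in the $T$--ideal generated by the Cayley--Hamilton identity and $tr(1)-d$.'' This does not follow. Knowing that $\mathfrak a_{d+1}$ interprets to the multilinearized Cayley--Hamilton polynomial only gives you \emph{one} generator of the two--sided ideal. A general element of the kernel is a linear combination of $\sigma\,\mathfrak a_{d+1}\,\tau$ for arbitrary $\sigma,\tau\in S_{m+1}$, and the trace identity obtained by interpreting such an element is \emph{not} manifestly a substitution into, or a product by monomials of, the Cayley--Hamilton identity; the cycles of $\sigma$ can weave the $d+1$ antisymmetrized slots together with the remaining $m-d$ indices in complicated ways. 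Proving that each such interpretation is nonetheless a $T$--ideal consequence of CH and $tr(1)=d$ is exactly the combinatorial content of the ``splitting the cycles'' technique (Proposition \ref{spc} in the paper), which shows how to factor $\sigma\,\mathfrak a_{d+1}\,\tau$ (after conjugating $\mathfrak a_{d+1}$ to an antisymmetrizer on an arbitrary $(d+1)$--subset) into pieces that translate, under the interpretation map, to $T$--ideal operations: substitution of monomials for variables, multiplication by traces, and left/right multiplication by tensor monomials. Without that analysis, the two containments (the $T$--ideal of CH is contained in the kernel, trivially; the kernel is contained in the $T$--ideal of CH, nontrivially) have not been matched, and the proof is incomplete. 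This is precisely the part of the argument that the paper borrows from \cite{P3}, \cite{P7}, \cite{agpr}, \cite{depr0} and then generalizes in Theorem \ref{SFT}.
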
 The reader may look at the proof of the second fundamental theorem \ref{sst2} as in \cite{P3}, \cite{P7}, \cite{agpr}  or \cite{depr0} since we have taken this as a model for the more general Theorem \ref{SFT} of this paper.

 This is just the first of a long list of ideas and theorems in non commutative algebra, in particular the Theory of  Cayley--Hamilton algebras as presented in the two recent preprints \cite{ppr},\ \cite{ppr1}.\smallskip

\begin{remark}\label{nos}
The first and second fundamental Theorem for matrix invariants are not as precise as the other cases. In fact from these Theorems for matrices one can only infer estimates, see \cite{R} or \S 12.2.4 of \cite{agpr},   and not a precise description of minimal generators and relations.

For $2\times 2$  matrices the results are  quite precise, due to the fact that in this case  the action of $GL(2)$  on $2\times 2$ matrices with 0 trace is equivalent to that of $SO(3)$  on its fundamental representation and then one can apply the first and second fundamental theorem for this group, see Chapter  9 of \cite{agpr}.

For $3\times 3$  matrices there are the computations in \cite{AbePit}.
\end{remark}
 
\section{Equivariant tensor polynomial maps}
\subsection{Algebras with trace\label{alwt}}
Let us quickly recall this formalism which will be used throughout this paper. 
\begin{definition}\label{awt}
An algebra  with trace is an associative algebra $A$ over some ring $F$ together with a linear map $tr:A\to A$  satisfying the following axioms, see \cite{agpr}  Chapter 2.3 or  \cite{ppr}. 
$$tr(a)\cdot b=b\cdot tr(a),\ tr(a \cdot b)=tr(b\cdot a),\ tr(tr(a)\cdot b)=tr(a)\cdot tr(b),\ \forall a,b\in A.$$ 
Then the image $tr(A)$ of $tr$  is a central subalgebra, called the {\em  trace algebra}  and $tr$ is $tr(A)$ linear.

\end{definition}
Remark that in the definition, in the approach of {\em universal algebra},  we do not assume that trace takes values in $F$.

Algebras with trace  form a category where maps are trace compatible homomorphisms. 

Algebras with trace  admit   free algebras.      \begin{definition}\label{frat}
The free algebra with trace in some list of variables $X$ is  $T\langle  X\rangle:= F\langle  X\rangle[ tr(M)]$. That is the polynomial algebra  in the elements $tr(M)$ over the usual free algebra  $F\langle  X\rangle$. 

Its trace algebra is the  polynomial algebra $F[tr(M)]$  in the elements $tr(M)$ over  $F$. By   $tr(M)$ we denote  the class of a monomial $M$ up to cyclic equivalence,      (cf.  \cite{ppr} for a detailed definition).

\end{definition}
\begin{definition}\label{faltr}
An   {\em   $n$--fold tensor  trace identity of $A$} is an element of    $T\langle  X\rangle^{\otimes n}$ vanishing for all evaluations in $A^{\otimes n}$.

In this case by   \begin{equation}\label{iltenp}
T\langle  X\rangle^{\otimes n}:=T\langle  X\rangle \otimes_{F[tr(M)]} T\langle  X\rangle\otimes  \cdots \otimes_{F[tr(M)]} T\langle  X\rangle^{\otimes n} 
\end{equation} we mean the tensor product  over the central trace  subalgebra      $F[tr(M)]$. 
\begin{remark}\label{basste}
Therefore $T\langle  X\rangle^{\otimes n}$ is a free $F[tr(M)]$ module with basis the {\em tensor monomials} $M_1\otimes M_2\otimes \ldots
\otimes M_n$. 
\end{remark}Similarly   $A^{\otimes n}$ is the tensor product over its trace algebra. Furthermore  $A^{\otimes n}$ is an algebra with trace where
$$tr(a_1\otimes \ldots \otimes a_n):=\prod_{i=1}^n tr(a_i)$$ and by evaluation we mean a homomorphism compatible with trace.

\end{definition}

\subsection{Tensor Polynomials}
 

This paper can be   also used as an introduction to the study  of  polynomial maps     $f: M_d^k\to M_d^{\otimes n }$  which are equivariant  under the conjugation action of $GL(\mathbb Q,d).$ Or with intrinsic notations  $f:End(V)^k\to End(V)^{\otimes n}=End(V ^{\otimes n}).  $  These maps form a non commutative algebra using the algebra structure of   $ M_d^{\otimes n }.$ Among these are the  non commutative polynomials in the tensor variables  $x_j^{(i)}:=1^{\otimes i-1}\otimes x_j  \otimes  1^{\otimes  n-i }$, where $x_j$ is a matrix variable.
\begin{definition}\label{teva}
We call the  $x_j^{(i)}$ {\em tensor variables} and the (non commutative) polynomials they generate {\em tensor polynomials}.
\end{definition}  
The concept of  {\em tensor polynomial or of tensor trace identities}   as far as I know was not considered by algebraists. I wish to thank Felix Huber for pointing out this notion which seems to play some role in Quantum Information Theory, see \cite{FH} and \cite{TDN}.

Given a free algebra $F\langle  X\rangle$ and an associative algebra $A$ over a field $F$, a map $f:X\to A$  induces a homomorphism $f:F\langle  X\rangle\to A$ and conversely a  homomorphism $f:F\langle  X\rangle\to A$ is determined by its values on $X$.

The polynomial identities of $A$ are the elements of  $F\langle  X\rangle$  vanishing under all evaluations.

Now  though,  such a map $f$  defines also, for all integers $n$,  a homomorphism of the corresponding $n$--fold tensor products:
$$f^{\otimes n}:F\langle  X\rangle^{\otimes n}\to A^{\otimes n}.$$ One can thus define as {\em $n$--fold tensor identity for $A$} an element $G\in F\langle  X\rangle^{\otimes n}$  vanishing for all evaluations in $A$, i.e. $f^{\otimes n}(G)=0,\ \forall f:X\to A$.

A similar notion holds for $A$ an algebra with trace.  
\subsection{Equivariant   maps and permutations}
We now restrict to the case  $A=M_d(F)$. Together with tensor polynomials we also have the usual invariants  of  matrices, described in Theorem \ref{fft2},  which may be viewed as  scalar valued maps to $ M_d^{\otimes n } $ i.e. multiples of the identity of  $ M_d^{\otimes n }.$

Finally  one has the {\em constant}  equivariant maps, that is the $GL(V)$  invariant elements of $ M_d^{\otimes n }.$ They form the subalgebra $\Sigma_n(V)\subset End(V)^{\otimes n}$ spanned by the permutations $\sigma\in S_n$ (Formula \eqref{twa1}) described by Theorem \ref{sst1}.\medskip

Thus we have 3 types of objects to consider, each contained in the next:
\begin{definition}\label{itret}
\begin{enumerate}\item The {\em tensor polynomial maps}, i.e.   the  polynomial  maps of  $A^{X}\to A^{\otimes n}$  induced by $F\langle  X\rangle^{\otimes n}$.\item The {\em trace tensor polynomial maps},  the maps of  $A^{X}\to A^{\otimes n}$  induced by $T\langle  X\rangle^{\otimes n}$
\item The {\em  equivariant tensor polynomial maps}, i.e.   the polynomial maps of  $A^{ X}\to A^{\otimes n}$  equivariant under conjugation by $GL(d,F)$. \end{enumerate}
\end{definition} Under the multiplication of the algebra  $ M_d(F)^{\otimes n}$  each one of these spaces forms an algebra.\smallskip

The way to understand the general form of such equivariant map, item 3., is  to associate to such a map an invariant.

Consider an equivariant polynomial  map $H(x_1,\ldots,x_k)$ of $k$, $d\times d$ matrix variables  with values in $M_d^{\otimes n}$. To this associate the  invariant scalar function of $k+n$, $d\times d$ matrix variables  $x_1,\ldots,x_k,y_1,y_2,\ldots,y_n :$
 \begin{equation}\label{pai}T(H)(x_1,\ldots,x_k,y_1,y_2,\ldots,y_n):=
tr(H(x_1,\ldots,x_k)y_1 \otimes y_2 \otimes \ldots \otimes y_n).
\end{equation} Since the trace form on  $M_d^{\otimes n}$ is non degenerate we can reconstruct $H$ from Formula    \eqref{pai}.\smallskip

By   Theorem \ref{lmuin} we have that Formula \eqref{pai} is a linear combination of products of elements of type  $tr(M)$  with $M$ a monomial  in the variables $x_1,\ldots,x_k$ and $y_1,y_2,\ldots,y_n $ and linear in these last variables.

So we say that $H$ is {\em  monomial } if it is  of the following form. There exist   monomials   $M_i, i=1,\ldots,n$  and $N_j$ in the variables $x_1,\ldots,x_k  $, possibly empty, that is with value 1, such that, setting $z_i=M_iy_i$  we have:
$$
T(H)=\prod_jtr(N_j) tr(z_{i_1 }z_{i_2 }\ldots z_{i_{h_1} })tr(z_{i_{h_1+1} }z_{i_{h_1+2} }\ldots z_{i_{h_1+h_2} })$$ \begin{equation}\label{monf}\ldots\ldots  tr(z_{i_{h_1+\ldots h_{k-1} }}\ldots z_{i_{n }}).
\end{equation}
If $\sigma\in S_n$ is the permutation of cycles $$\sigma=({i_1 },{i_2 },\ldots ,{i_{h_1} }) ({i_{h_1+1} },{i_{h_1+2} },\ldots, {i_{h_1+h_2} }) \ldots\ldots  ({i_{h_1+\ldots +h_{k-1} }},\ldots ,{i_{n }}) $$
then Formula \eqref{monf} becomes
 \begin{equation}\label{monf1}
T(H)=\prod_jtr(N_j) tr( \sigma^{-1} \circ  M_1 y_ 1 \otimes \dots\otimes   M_n y_{n})   ) \end{equation}
 \begin{equation}\label{monf2}
 =tr \left(\prod_jtr(N_j)  ( \sigma^{-1} \circ  M_1  \otimes \dots\otimes   M_n )    y_1 \otimes y_2 \otimes \ldots \otimes   y_n \right) \end{equation} \begin{equation}\label{monf3}
 \implies H=  \prod_jtr(N_j)   \sigma^{-1} \circ  M_1  \otimes \dots\otimes   M_n. \end{equation}
\begin{theorem}\label{FFT0}
Equivariant tensor polynomial maps are linear combinations of maps given by Formula  \eqref{monf3}.

In other words, using Definition \ref{pua}  of the next paragraph, the map $\pi_n: T\langle X\rangle^{\otimes n}\ltimes \mathbb Q[S_n]\to \mathcal T_{X}^n(V)$, from the symbolic algebra  to the algebra of equivariant maps is   surjective.
\end{theorem}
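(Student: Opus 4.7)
The plan is to invoke the classical first fundamental theorem for multilinear matrix invariants (Theorem \ref{lmuin}) after using the non-degenerate trace pairing on $M_d^{\otimes n}$ to convert the vector-valued equivariant map $H$ into a scalar $GL(V)$-invariant. The strategy follows exactly the chain of identifications in \eqref{pai}--\eqref{monf3}, which is already partly laid out in the paragraphs preceding the theorem; the task is essentially to justify that every equivariant $H$ falls into the pattern described.

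First I would observe that the pairing $(A,B)\mapsto tr(AB)$ on $M_d^{\otimes n}$ is $GL(V)$-equivariant and non-degenerate, so the assignment $H\mapsto T(H)$ of \eqref{pai} is an injection from equivariant polynomial maps $M_d^k\to M_d^{\otimes n}$ into $GL(d,\mathbb Q)$-invariant polynomial functions of $k+n$ matrix variables $x_1,\ldots,x_k,y_1,\ldots,y_n$ which are moreover multilinear in the $y_j$'s. It therefore suffices to show that every such invariant arises as $T(H)$ for some $H$ of the form \eqref{monf3}.

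Next, by Theorem \ref{fft2}, every $GL(d,\mathbb Q)$-invariant polynomial function of matrix variables is a polynomial in traces of noncommutative monomials in those variables. Imposing multilinearity in each $y_j$ forces each $y_j$ to appear exactly once across all trace factors of a given monomial summand. Regrouping, such a summand is a product of trace factors $\prod_j tr(N_j)$ involving only $x$'s together with trace factors each of the form $tr(M_{a_1}y_{a_1}M_{a_2}y_{a_2}\cdots M_{a_h}y_{a_h})$, one per orbit of $\{1,\ldots,n\}$ under a certain permutation $\sigma\in S_n$. Up to cyclically rotating each such factor so that the indices of $y$ appear in the canonical cyclic order of $\sigma$, this is precisely Formula \eqref{monf} with $z_i=M_iy_i$.

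Reading Formula \eqref{phis} of Theorem \ref{lmuin} from right to left with the $z_i$ in the role of the variables then rewrites each such product as Formula \eqref{monf1}. Pulling the tensor $y_1\otimes\cdots\otimes y_n$ out of the trace gives Formula \eqref{monf2}, so that the scalar invariant $T(H)$ coincides with $T(\widetilde H)$ where $\widetilde H=\prod_j tr(N_j)\,\sigma^{-1}\circ(M_1\otimes\cdots\otimes M_n)$ as in Formula \eqref{monf3}. Non-degeneracy of the pairing now forces $H=\widetilde H$, exhibiting $H$ as a linear combination of the claimed elements, each of which visibly lies in the image of $\pi_n$.

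There is no serious obstacle here: the content is a bookkeeping reorganization of Theorem \ref{lmuin} using the trace pairing. The one point requiring care is the passage between the cycle notation for $\sigma$ and the tensor expression $\sigma^{-1}\circ M_1\otimes\cdots\otimes M_n$, where one must verify that the labels $M_i$ end up attached to the $i$-th tensor slot rather than to $y_{\sigma(i)}$; this is guaranteed by Lemma \ref{coss}, i.e.\ Formula \eqref{formuu0}.
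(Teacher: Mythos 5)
Your proposal is correct and follows essentially the same chain of reasoning the paper uses, since the paper's proof of Theorem \ref{FFT0} is exactly the discussion from Formula \eqref{pai} through Formula \eqref{monf3}: convert $H$ to the scalar invariant $T(H)$ via the non-degenerate trace pairing, apply the FFT for matrix invariants (Theorem \ref{lmuin}/\ref{fft2}) to express $T(H)$ as products of traces of monomials, exploit multilinearity in the $y_j$ to organize the trace factors into the cycle structure of a $\sigma\in S_n$, and recover $H$ in the form \eqref{monf3}. Your added care about the cyclic rotations and about which variant of the FFT applies when $H$ is not multilinear in the $x$'s (using Theorem \ref{fft2} rather than the purely multilinear Theorem \ref{lmuin}) is a sound clarification that does not change the argument.
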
This Theorem may be viewed as a First Fundamental Theorem for tensor valued equivariant functions  on matrices.

\subsubsection{The algebra $T\langle X\rangle^{\otimes n}\ltimes \mathbb Q[S_n]$} We use the definitions and notations of \S \ref{alwt} and Formula \eqref{iltenp}.

From the point of view of universal algebra the FFT says that the $n$ tensor valued    equivariant maps are the evaluations in matrices of the elements of the {\em twisted} algebra $T\langle X\rangle^{\otimes n}\ltimes \mathbb Q[S_n]$.

\begin{definition}\label{pua} For every $n\in\mathbb N, n=0,1,\cdots$ we set:\begin{enumerate}\item $T\langle X\rangle^{\otimes n}\ltimes \mathbb Q[S_n]$ is $T\langle X\rangle^{\otimes n}\otimes \mathbb Q[S_n]$ but with the commuting relations
$$  \sigma \circ M_1\otimes\ldots\otimes M_n   =M_ {\sigma^{-1}(1)}\otimes\ldots\otimes M_ {\sigma^{-1}(n)} \circ\sigma,\  \ \sigma\in S_n.$$

\item  The elements of $\mathbb Q[S_n]$ from the point of view of universal algebra are {\em constants} and are canonically evaluated by the map $\pi$ of Formula 
\eqref{twa1} in $M_d^{\otimes n}$ (cf. Theorem \ref{sst1}).

\item The algebra $T\langle X\rangle^{\otimes n}\ltimes \mathbb Q[S_n]$ is an {\em algebra with trace} (according to the   definition of page \pageref{alwt}) where  trace is defined         by Formula  \eqref{carsv}
and  
\eqref{phis}, now thought of as definitions.  

 Its trace algebra coincides with the central trace algebra $ \mathbb Q[tr(M)]$ of $T\langle X\rangle$ of Definition \ref{faltr}.\end{enumerate} 

\end{definition} 

In particular for $n=1$  we have the free algebra with trace $T\langle X\rangle$ and, for $n=0$ its trace algebra $ \mathbb Q[tr(M)]$.

 \begin{remark}\label{diffe}  The  tensor polynomial maps as algebra are identified to the algebra $F\langle  X\rangle^{\otimes n}$  modulo the vanishing elements, that is the tensor polynomial identities. Similar statement for   trace tensor polynomial maps, and $T\langle  X\rangle^{\otimes n}$, and general equivariant maps and the twisted algebra $T\langle X\rangle^{\otimes n}\ltimes \mathbb Q[S_n]$.  Observe that $F\langle  X\rangle^{\otimes n}\subset T\langle  X\rangle^{\otimes n}\subset T\langle X\rangle^{\otimes n}\ltimes \mathbb Q[S_n]$. \end{remark}
  \paragraph{Splitting the cycles} 
It order to treat the SFT it is useful to understand in a more precise way the multilinear case of the FFT. 

If the map $H$, appearing in Formula \eqref{pai},  is linear also in the variables $x_i$ then there is a    permutation $\tau\in S_{n+k}$ such that Formula  \eqref{monf} equals 
\begin{equation}\label{muls}
tr(H(x)y_1 \otimes y_2 \otimes \ldots \otimes y_n):=tr(\tau^{-1}\circ      y_1 \otimes y_2 \otimes \ldots \otimes   y_n\otimes x_1 \otimes x_2 \otimes \ldots \otimes   x_k). 
\end{equation} 
\begin{definition}\label{tkn}
We denote by   $T^{(n)}_{k,\tau}(x_1, x_2 , \ldots ,   x_k)$ the tensor valued map  $H(x)$  associated to $\tau\in S_{n+k}$ by Formula  \eqref{muls}. 

We call the map $\tau\mapsto T^{(n)}_{k,\tau}(x_1, x_2 , \ldots ,   x_k)$ the $n^{th}$--interpretation of the permutation $\tau\in  S_{n+k}$.

\end{definition}

Our next task is to describe  the elements $T^{(n)}_{k,\tau}(x_1, x_2 , \ldots ,   x_k)$  in terms of the permutation $\tau$.  We will use a simple fact on permutations that we call {\em splitting the cycles}, used in \cite{P3} to prove the SFT for matrices, Theorem \ref{sst2}.   By the {\em trivial cycles cycles} of a permutation we  mean   the ones which move elements.
   \begin{proposition}\label{spc}[Splitting the cycles]  Decompose $\{1,2,\ldots,m\}=A\cup B$ as disjoint union of two subsets $A,B$.   
 
Every permutation   $\tau\in S_m$ can be uniquely decomposed as the product  $\tau=\tau_1\tau_2\tau_3$ where:
\begin{enumerate}\item Each non trivial  cycle of $\tau_1$       contains  exactly one element of $A$.
\item $\tau_2\in S_B$   is   formed by the cycles of $ \tau $ permuting only the indices in $B$. It commutes with  $\tau_1$ and $\tau_3$ since the indices which it moves are disjoint from the ones moved by $\tau_1,\ \tau_2$.\item $\tau_3\in S_A$ permutes only the indices in $A$. \end{enumerate} 

  \end{proposition}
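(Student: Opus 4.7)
The plan is to argue cycle-by-cycle. Observe first that since $\tau_2$ is specified to consist of those cycles of $\tau$ lying entirely in $B$, it is determined by $\tau$, $A$, $B$; writing $\tau = \tau' \tau_2$ where $\tau'$ collects the remaining cycles of $\tau$ (those meeting $A$) reduces matters to decomposing each such cycle $C$ independently as a product $C = \tau_1^{(C)} \tau_3^{(C)}$ with $\tau_3^{(C)} \in S_{A\cap C}$ and every non-trivial cycle of $\tau_1^{(C)}$ containing exactly one element of $A$.

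For the existence step, fix a cycle $C$ of $\tau$ with $A \cap C = \{a_1, \ldots, a_j\}$ listed in cyclic order, and let $b^{(i)}_1, \ldots, b^{(i)}_{k_i}$ be the (possibly empty) string of $B$-elements in $C$ strictly between $a_i$ and $a_{i+1}$ (indices mod $j$). I define
\[
\tau_3^{(C)} := (a_1, a_2, \ldots, a_j), \qquad \tau_1^{(C)} := \prod_{i=1}^{j} (a_{i+1}, b^{(i)}_1, b^{(i)}_2, \ldots, b^{(i)}_{k_i}),
\]
the first being the identity when $j = 1$. The factors of $\tau_1^{(C)}$ have pairwise disjoint supports, so they commute. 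A direct check on generators shows $\tau_1^{(C)} \tau_3^{(C)} = C$: on $a_i$ the composition gives $\tau_1^{(C)}(a_{i+1}) = b^{(i)}_1$ (or $a_{i+1}$ if $k_i=0$), matching $\tau(a_i)$; on $b^{(i)}_\ell$ it gives $b^{(i)}_{\ell+1}$ or $a_{i+1}$ as required. Summing over cycles $C$ of $\tau$ meeting $A$ yields $\tau_1, \tau_3$ with the stated properties, and the supports of $\tau_2$ are disjoint from those of $\tau_1$ and $\tau_3$ by construction.

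For uniqueness, $\tau_2$ is already fixed by the defining description, so it suffices to show $\tau_3$ is forced. Given any valid decomposition and $a \in A$, set $a' := \tau_3(a) \in A$ and let $(a', c_1, \ldots, c_\ell)$ be the $\tau_1$-cycle through $a'$ (with $\ell = 0$ meaning $a'$ is fixed by $\tau_1$). Since the $c_s$ are $B$-elements lying in a $\tau$-cycle meeting $A$, they are untouched by $\tau_2$, so
\[
\tau(a) = \tau_1(a') = c_1, \quad \tau^2(a) = c_2, \quad \ldots, \quad \tau^\ell(a) = c_\ell, \quad \tau^{\ell+1}(a) = a',
\]
with none of the intermediate images in $A$. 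Hence $\tau_3(a) = \tau^{n(a)}(a)$, where $n(a) = \min\{n \geq 1 : \tau^n(a) \in A\}$ is the first-return time, and this quantity depends only on $\tau$. Thus $\tau_3$ is determined, and then $\tau_1 = \tau\, \tau_3^{-1} \tau_2^{-1}$ is also determined.

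The computation is elementary; the only delicate point is the bookkeeping in the existence step, where one must be careful that the cyclic shift linking $a_i \mapsto a_{i+1}$ under $\tau_3^{(C)}$ is matched by attaching the interval of $B$-elements coming \emph{after} $a_i$ in $C$ to the $\tau_1^{(C)}$-cycle containing $a_{i+1}$, not the one containing $a_i$, so that $\tau_1^{(C)}\tau_3^{(C)} = C$ rather than the conjugate arrangement.
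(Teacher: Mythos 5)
Your existence step is the same construction as the paper's: separate off the $B$-only cycles $\tau_2$, then split each remaining cycle $C$ at its $A$-points, so that $\tau_3^{(C)}$ is the cyclic order on $A\cap C$ and $\tau_1^{(C)}$ is the product of the short cycles gluing each $A$-point to the $B$-interval that precedes it. Your product $\prod_i(a_{i+1},b^{(i)}_1,\dots,b^{(i)}_{k_i})$ is exactly the paper's $\prod_i(a_i,C_i)$ after the obvious reindexing, so the two formulas agree (and both agree with the worked example $(1,7,8,4,2,6,3)$, in which the paper's displayed labels $C_1,C_2$ are actually swapped relative to its own Formula \eqref{sppli}).

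Where you genuinely diverge is uniqueness. The paper argues by contradiction: from $\sigma_1\sigma_3=\tau_1$ with $\sigma_3\neq 1$ it extracts a cycle of $\tau_1$ meeting $A$ twice. You instead produce a closed formula $\tau_3(a)=\tau^{n(a)}(a)$, where $n(a)$ is the first-return time of $a$ to $A$ under $\tau$, which determines $\tau_3$ (and hence $\tau_1=\tau\tau_3^{-1}\tau_2^{-1}$) directly from $\tau$. This is arguably more illuminating: it identifies $\tau_3$ intrinsically as the ``first-return'' or ``$A$-shadow'' permutation, rather than deducing uniqueness indirectly, and it also re-derives disjointness of supports as a by-product. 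One small expository wrinkle: when you write ``since the $c_s$ are $B$-elements lying in a $\tau$-cycle meeting $A$, they are untouched by $\tau_2$,'' this is exactly what the displayed chain of equalities is in the middle of proving, so as written it reads as circular. It is not actually a gap --- the chain is a straightforward induction on $s$, with the base case $c_1=\tau(a)$ immediate and the inductive step using that $c_s$ is already known to lie in the $\tau$-orbit of $a$ --- but you should say so, or alternatively establish up front that $\operatorname{supp}(\tau_1)\cap\operatorname{supp}(\tau_2)=\emptyset$ in any valid decomposition (note $\tau\tau_2^{-1}=\tau_1\tau_3$ since $\tau_2,\tau_3$ commute, and $\tau\tau_2^{-1}$ fixes $\operatorname{supp}(\tau_2)$ pointwise, while $\tau_3$ fixes $B$, so $\tau_1$ fixes $\operatorname{supp}(\tau_2)$).
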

\begin{proof}  First we may split $\tau= \bar\tau \tau_2 $ where  $\tau_2$ collects all the cycles  of $ \tau $ permuting only the indices in $B$. Replacing $\tau$ with $\bar \tau$  we may assume that $\tau=\bar\tau_1 \bar\tau_3$  where $\bar\tau_3$ collects all the cycles of $ \tau $ permuting only the indices in $A$. 
Thus   $\bar\tau_1 $   collects all the cycles of $ \tau $ involving both the indices in $A$ and in $B$. 

By construction the 3 permutations $\bar\tau_1, \tau_2,\bar \tau_3 $ commute with each other and the indices moved by  $\tau_2$ are disjoint from the ones moved by  $\bar\tau_1$ and $\bar\tau_3$.

Then  
  the construction is based, by induction on the number of cycles, on the following identity. For $a_1,a_2,a_3\ldots,a_j$ numbers in $A$ and $ C_1,C_2,\ldots,C_k $ strings of numbers in $B$, each number appearing only once,  consider  the splitting of the  permutation cycle:
 $$\mathtt c :=(\ C_1,\ a_1,\ C_2,\  a_2,\ C_3,\ a_3,  \ \dots   ,  \ C_j,\ a_j )$$  \begin{equation}\label{sppli}
 =   ( a_1,\ C_1)(\ a_2,\ C_2)( a_3,\ C_3)   \ \dots\ (a_j,\ C_j) \circ ( a_1, a_2, a_3,\ldots,a_j)
 \end{equation}
This we call {\em splitting a cycle} with respect to $A,B$.
$$e.g.\ A=\{1,2\},\ B=\{3,4,5,6,7,8\} ;\ C_1=\ 7,8,4 ;C_2=6,3:\ a_1=1,\ a_2=2 $$$$  (1,7,8,4,2,6,3)=(\boxed{2},7,8,4 ) (\boxed{1},6,3 )\circ  (1,2) .$$
We then split each cycle  of  $\bar \tau_1$  obtaining for each cycle $\mathtt c_i$ of $\bar \tau_1$ a product $\mathtt c_i=d_i\circ e_i$ with $e_i$  permuting only  the indices of $A$ appearing in $\mathtt c_i$  and    each  cycle  of $d_i$ contains exactly one of the    indices of $A$ appearing in $\mathtt  c_i$. 

 Since the $\mathtt c_i$ involve moving disjoint indices   $\bar\tau_1=\prod_i \mathtt c_i=\prod_id_i\circ\prod_ie_i$.  We finally set  
 \begin{equation}\label{laspi}\tau_1:= \prod_id_i;\quad 
\tau_3:=\prod_ie_i \circ \bar\tau_3   \implies \bar\tau_1 \bar\tau_3= \tau_1\tau_3 \implies  \tau = \tau_1\tau_2\tau_3.
\end{equation}

The uniqueness is as follows. The factor $\tau_2$ is well defined from $\tau$. 

So assume we have 
$\sigma_1\sigma_3=\tau_1\tau_3$ with  $\sigma_3,\tau_3$  permuting only the indices in $A$ and  each cycle of $\sigma_1$ and of $\tau_1$        contains  exactly one element of $A$.  Multiplying both sides  by $\tau_3^{-1}$ we may assume $\tau_3=1$ so assume $\sigma_1\sigma_3 =\tau_1$.

 If $\sigma_3\neq 1$  the nontrivial  cycles of  $\sigma_3$  cannot be disjoint from those of $  \sigma_1$ otherwise they would be cycles of  $\tau_1$ a contradiction since the cycles of $\tau_1$  contain some index in $B$. 

But then there is a cycle $\mathtt c $  of   $\sigma_3$   and  some cycles $(\beta_i,i)$  of  $  \sigma_1$  with  $\beta_i$ a non trivial string in $B   $  and $i$ appears in $\mathtt c$. The product $c \prod_i (\beta,i)$ gives rise (by gluing the cycles) to a cycle    of $\tau_1$ with  more than one element in $A$,   a contradiction. \end{proof}
\begin{remark}\label{lesp}
Of course one could also split the cycle $\mathtt c$ from the {\em left} as:
 \begin{equation}\label{spplil}
 = ( a_1, a_2, a_3,\ldots,a_j)   \circ   ( a_j,\ C_1)(\ a_1,\ C_2)( a_2,\ C_3)   \ \dots\ (a_{j-1},\ C_j)
 \end{equation}\end{remark}
 \begin{definition}\label{splp}
Denote by  $U_{A}(B)$    the set of permutations in  $S_{m }=S_{A\cup B }$ with the property that in each cycle appears at most one element of  $A$. 
\end{definition} We may say that, 
Proposition \ref{spc},\quad  [Splitting the cycles]    states that    the product map  $\pi:  U_{A}(B)\times  S_A \to S_{A\cup B },$ $  (\tau,\sigma)\mapsto \tau\circ\sigma$ is a bijection with inverse $\gamma\mapsto (\gamma_1  \gamma_2,\gamma_3)$. \begin{remark}\label{sco} 1)\quad By the uniqueness of the decomposition it follows that, 
 if $\tau\in S_A$ we have $(\sigma\circ\tau)_3=\sigma_3\circ\tau $ for all $\sigma\in S_{A\cup B }$.
 
2)\quad  Assume  $\tau=\gamma \rho$ with  $\gamma $ and $\rho$ permutations on two disjoint sets of indices, $(A_1\cup B_1), (A_2\cup B_2)$ and $A=A_1\cup A_2,\ B=B_1\cup B_2$ then, 
 \begin{equation}\label{sppr}
\tau_i=\gamma_i \rho_i,\  i=\{1,2,3\} \ \text{ for the respective decompositions.}
\end{equation}
3)\quad  If $\sigma=\alpha\circ \beta,\ \alpha \in S_A,\ \beta\in S_B$ we have:
\begin{equation}\label{conu}
(\sigma\tau\sigma^{-1})_1= \sigma\tau_1\sigma^{-1} ,\ (\sigma\tau\sigma^{-1})_2= \beta\tau_2\beta^{-1},\ (\sigma\tau\sigma^{-1})_3= \alpha\tau_3\alpha^{-1}.
\end{equation}
4)\quad $U_{A}(B)$ is stable under conjugation by elements of $S_A\times S_B$.\end{remark}
 \subsubsection{Formulas\label{formule}}
 We want to apply the previous Proposition \ref{spc} to Formula \eqref{muls}.
 
 Decompose $\{1,2,\ldots,n+k\}=A\cup B$ with $A$ the indices of type $y$ and $ B$ the ones of type $x$:$$A=\{1,2,\ldots,n\},\ B=\{n+1,   \ldots  n+k\}.$$   We identify $S_B=S_k$.
Denote, for simplicity of notations, with   the symbols 
$Y_A:=y_1 \otimes y_2 \otimes \ldots \otimes y_n ;$ and $ X_B:= x_1 \otimes x_2 \otimes \ldots \otimes   x_k $. So equation \eqref{muls} is written as
\begin{equation}\label{muls1}
tr(H(x)Y_A):=tr(\tau^{-1}\circ     Y_A\otimes X_B). 
\end{equation} If $\sigma\in S_A,\ \tau\in S_B$ we have
$$\sigma \circ  \tau \circ Y_A\otimes X_B=(\sigma \circ  Y_A)\otimes (\tau \circ    X_B )$$ Notice next that Remark \ref{cong} gives:
 
\begin{lemma}\label{dueco}
1)\quad If $\sigma\in S_B$ permutes the indices $B$  we have 
\begin{equation}\label{muls2}
T^{(n)}_{k,\sigma\tau\sigma^{-1}}(x_1 ,x_2 , \ldots , x_k)= T^{(n)}_{k,\tau}(x_{\sigma(1)}, x_{\sigma(2)}, \ldots,  x_{\sigma(k)}).
\end{equation} 
2)\quad If $\gamma\in S_A$ permutes the indices $A$  we have 
\begin{equation}\label{muls3} T^{(n)}_{k,\gamma\tau\gamma^{-1}}(x_1 ,x_2 , \ldots , x_k)=\gamma  T^{(n)}_{k,\tau}(x_1 ,x_2 , \ldots , x_k) \gamma^{-1} 
\end{equation}  is a permutation of the tensor factors.
\end{lemma}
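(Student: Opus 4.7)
The plan is to derive both identities directly from the defining formula \eqref{muls1}, using trace cyclicity together with formula \eqref{alco} exactly as in Remark \ref{cong}, and then conclude by non-degeneracy of the tensor trace pairing.

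First, recall that $T^{(n)}_{k,\tau}(x_1,\ldots,x_k)$ is characterized (by non-degeneracy of the trace form on $M_d^{\otimes n}$) as the unique element $H\in M_d^{\otimes n}$ with $\operatorname{tr}(H\cdot Y_A)=\operatorname{tr}(\tau^{-1}\circ Y_A\otimes X_B)$ for all choices of $y_1,\ldots,y_n$. Hence, to identify $T^{(n)}_{k,\sigma\tau\sigma^{-1}}$ (resp.\ $T^{(n)}_{k,\gamma\tau\gamma^{-1}}$), it suffices to rewrite the right-hand side of \eqref{muls1} with $\tau$ replaced by the conjugate permutation into the form $\operatorname{tr}(H'\cdot Y_A)$ for the claimed $H'$.

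For part (1), starting with $\operatorname{tr}((\sigma\tau\sigma^{-1})^{-1}\circ Y_A\otimes X_B) = \operatorname{tr}(\sigma\tau^{-1}\sigma^{-1}\circ Y_A\otimes X_B)$, I would use cyclicity of the trace to rewrite it as $\operatorname{tr}(\tau^{-1}\circ\sigma^{-1}\circ(Y_A\otimes X_B)\circ\sigma)$. Since $\sigma\in S_B$ moves only the $B$-tensor slots, it commutes past $Y_A$ and acts only on $X_B$; formula \eqref{alco} (in the form obtained by substituting $\sigma\mapsto\sigma^{-1}$) then gives $\sigma^{-1}\circ X_B\circ\sigma = x_{\sigma(1)}\otimes\cdots\otimes x_{\sigma(k)}$. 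This reduces the expression to $\operatorname{tr}(\tau^{-1}\circ Y_A\otimes(x_{\sigma(1)}\otimes\cdots\otimes x_{\sigma(k)}))$, which by definition equals $\operatorname{tr}(T^{(n)}_{k,\tau}(x_{\sigma(1)},\ldots,x_{\sigma(k)})\cdot Y_A)$, proving \eqref{muls2}.

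For part (2), the analogous calculation gives $\operatorname{tr}(\gamma\tau^{-1}\gamma^{-1}\circ Y_A\otimes X_B) = \operatorname{tr}(\tau^{-1}\circ\gamma^{-1}\circ(Y_A\otimes X_B)\circ\gamma)$. Now $\gamma\in S_A$ acts only on the $A$-slots, so by \eqref{alco} the conjugation by $\gamma$ transforms $Y_A$ into $y_{\gamma(1)}\otimes\cdots\otimes y_{\gamma(n)}$ and leaves $X_B$ untouched. Thus the expression becomes $\operatorname{tr}(T^{(n)}_{k,\tau}(x_1,\ldots,x_k)\cdot(y_{\gamma(1)}\otimes\cdots\otimes y_{\gamma(n)}))$, and applying cyclicity once more rewrites it as $\operatorname{tr}(\gamma\,T^{(n)}_{k,\tau}(x_1,\ldots,x_k)\,\gamma^{-1}\cdot Y_A)$, giving \eqref{muls3}. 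That this operator is a permutation of the tensor factors of $T^{(n)}_{k,\tau}$ is then immediate from \eqref{alco} applied one last time.

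There is no real obstacle: both statements are a book-keeping exercise in trace cyclicity plus a single application of \eqref{alco}, the only point requiring a mild comment being the clean separation of the $A$- and $B$-actions on $Y_A\otimes X_B$, which is the reason the two formulas are stated separately with different outcomes (substitution of variables vs.\ conjugation of the tensor operator).
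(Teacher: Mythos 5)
Your proof is correct and follows essentially the same route as the paper's own proof: for each part, rewrite $\operatorname{tr}((\cdot)^{-1}\circ Y_A\otimes X_B)$ via trace cyclicity, use Formula \eqref{alco} to turn the conjugation by $\sigma$ (resp.\ $\gamma$) into a permutation of the $x$-variables (resp.\ $y$-variables), and conclude by non-degeneracy of the trace pairing. The only cosmetic difference is that you make the cyclicity step explicit at the start, whereas the paper jumps directly to the rewritten trace expression and performs the compensating $\gamma(\cdots)\gamma^{-1}$ insertion only in part (2).
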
 
\begin{proof}1)\quad $$tr((\sigma\tau\sigma^{-1})^{-1}\circ     Y_A\otimes X_B)=tr(\tau^{-1}\circ     Y_A\otimes (\sigma^{-1}  \circ X_B  \circ \sigma ))$$ 
and, by Formula \eqref{alco} 
$$\sigma^{-1} \circ  X_B  \circ \sigma= x_{\sigma(1)} \otimes x_{\sigma(2)}\otimes \ldots \otimes   x_{\sigma(k)}.$$  

2)\quad      
$$tr((\gamma\tau\gamma^{-1})^{-1}\circ     Y_A\otimes X_B)=tr(\tau^{-1}\circ   (\gamma^{-1}  \circ  Y_A \circ \gamma)\otimes  X_B  )
$$
 $$\gamma^{-1}  \circ  Y_A \circ \gamma= y_{ \gamma(1)} \otimes y_{ \gamma(2)}\otimes \ldots \otimes   y_{ \gamma(n)} .$$
Thus
$$tr( T^{(n)}_{k,\gamma\tau\gamma^{-1}}(x_1 ,x_2 , \ldots , x_k)Y_A)=tr( T^{(n)}_{k, \tau }(x_1 ,x_2 , \ldots , x_k)y_{ \gamma(1)} \otimes y_{ \gamma(2)}\otimes \ldots \otimes   y_{ \gamma(n)} ) $$
$$= tr( \gamma T^{(n)}_{k, \tau }(x_1 ,x_2 , \ldots , x_k)\gamma^{-1}\gamma y_{ \gamma(1)} \otimes y_{ \gamma(2)}\otimes \ldots \otimes   y_{ \gamma(n)}\gamma^{-1} ) $$$$= tr( \gamma T^{(n)}_{k, \tau }(x_1 ,x_2 , \ldots , x_k)\gamma^{-1} y_{  1 } \otimes y_{  2 }\otimes \ldots \otimes   y_{  n } ) .$$
\end{proof}

Assume  that $\tau=\gamma_1\gamma_2$ with  $\gamma_1$ and $\gamma_2$ permutations on two disjoint sets of indices, $(A_1\cup B_1), (A_2\cup B_2)$ and $A=A_1\cup A_2,\ B=B_1\cup B_2$. Then, up to  conjugating with a permutation of $S_A\times S_B$, and using Lemma \ref{dueco},  we may assume $A_1=  \{1,2,\ldots,p\}$ the first $p$ indices and similarly for $B_1=  \{n+1,n+2,\ldots,n+h\}$  so that 
$$Y_A=Y_{A_1}\otimes Y_{A_2},\quad  X_B=X_{B_1}\otimes X_{B_2}.$$
\begin{equation}\label{dett}
tr\left(( \gamma_1\gamma_2)^{-1}  \circ     Y_A\otimes X_B\right)=tr\left(  \gamma_1 ^{-1}  \circ     Y_{A_1}  \otimes X_{B_1} \right)
tr\left(   \gamma_2 ^{-1}  \circ       Y_{A_2} \otimes  X_{B_2}\right)
\end{equation}
We finally deduce 
\begin{equation}\label{ilpd}
T^{(n)}_{k,\gamma_1\gamma_2}(x_1 ,x_2 , \ldots , x_k) =T^{(p)}_{h,\gamma_1}(x_1 ,x_2 , \ldots , x_h)\otimes T^{(n-p)}_{k-h,\gamma_2}(x_{h+1} ,x_{h+2} , \ldots , x_k).
\end{equation}
Here if either $A_1$  or $A_2$ is empty, i.e. $p=0$ or $p=n$, the corresponding element is a scalar and the tensor product is to be understood as multiplication.

We use the following notation.
\begin{definition}\label{moas}
 If $C =n+ j_1,n+j_2,\ldots,n+ j_{ p}$ is a string of indices in $B$ we  say that the monomial $M  := x_{ j_1} x_{j_2}\ldots, x_{ j_{ p}}$ is   {\em  associated to $C$}.
\end{definition}

 \begin{proposition}\label{losp} If $\tau=\tau_1\tau_2\tau_3$ is the splitting relative to  the decomposition $(1,2,\ldots,n)\cup (n+1,n+2,\ldots, n+k)$ and $h$ is the number of elements of $B$ moved by $\tau_1$ we have:
\begin{equation}\label{foft}
T^{(n)}_{k,\tau}=\tau_3^{-1}\circ t_{\tau_2}(x) T^{(n)}_{h,\tau_1}(x);\    t_{\tau_2}(x)= \prod_\ell tr(N_\ell),\  T^{(n)}_{h,\tau_1}(x) =   M_{ 1 }  \otimes M_{ 2 }   \otimes \ldots \otimes  M_ { n } . 
\end{equation}
\qquad  With $N_\ell=x_{ j_1} x_{ j_2}\ldots x_{  j_{ \ell }}$ the monomials  corresponding to the cycles $(n+ j_1,n+j_2,\ldots,n+ j_{ \ell })$ of  $\tau_2$. The  $M_i=x_{ f_1} x_{ f_2}\ldots x_{ f_{ p }}$  (as in Formula \eqref{monf1}) the monomial   corresponding to the cycle  $(i, n+ f_1,n+f_2,\ldots,n+f_{ p})$ of  $\tau_1$.
\end{proposition}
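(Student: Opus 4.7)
The plan is to pair both sides of Formula \eqref{foft} against the tensor $Y_A := y_1 \otimes \cdots \otimes y_n$ via the non-degenerate trace form of \eqref{pai}. This reduces the proposition to verifying the scalar identity
\[
tr(\tau^{-1} \circ Y_A \otimes X_B) \;=\; t_{\tau_2}(x)\cdot tr\bigl(\tau_3^{-1} \circ (M_1 y_1 \otimes \cdots \otimes M_n y_n)\bigr).
\]
Both sides are instances of Theorem \ref{lmuin}, so each expands as a product of traces indexed by the cycles of the relevant permutation; the proof then comes down to matching these products cycle by cycle.

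On the left, Theorem \ref{lmuin} applied to $\tau$ decomposes the trace as a product over the cycles of $\tau$. By the construction in Proposition \ref{spc}, the cycles of $\tau$ fall into three disjoint families: the pure $B$-cycles (forming $\tau_2$), the pure $A$-cycles (forming $\bar\tau_3$), and the mixed cycles (forming $\bar\tau_1$). The pure $B$-cycles contribute exactly $\prod_\ell tr(N_\ell) = t_{\tau_2}(x)$. A pure $A$-cycle $(j_1,\ldots,j_p)$ contributes $tr(y_{j_1}\cdots y_{j_p})$. A mixed cycle $\mathtt c = (C_1,a_1,C_2,a_2,\ldots,C_j,a_j)$ contributes, by direct application of Theorem \ref{lmuin}, the trace $tr(N_{C_1}y_{a_1}N_{C_2}y_{a_2}\cdots N_{C_j}y_{a_j})$, where $N_{C_l}$ is the monomial associated to the string $C_l$ of $B$-indices as in Definition \ref{moas}.

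On the right, factoring out the scalar $t_{\tau_2}(x)$ and collecting each $M_iy_i$ as one tensor factor, a second application of Theorem \ref{lmuin}, now to $\tau_3^{-1}$, expresses the remaining trace as a product over the cycles of $\tau_3$. By the construction of the splitting, $\tau_3 = \prod_i e_i \cdot \bar\tau_3$. Each pure $A$-cycle of $\bar\tau_3$ consists of indices on which $\tau_1$ acts trivially, so the corresponding $M_j = 1$ and the contribution collapses to $tr(y_{j_1}\cdots y_{j_p})$, matching the pure $A$-cycle of $\tau$. Each cyclic factor $e_i = (a_1,\ldots,a_j)$ obtained by splitting a mixed cycle $\mathtt c_i = (C_1,a_1,\ldots,C_j,a_j)$ of $\bar\tau_1$ contributes $tr(M_{a_1}y_{a_1}\cdots M_{a_j}y_{a_j})$, and the splitting identity \eqref{sppli} says that the cycle of $\tau_1$ through $a_l$ is exactly $(a_l,C_l)$, so that $M_{a_l} = N_{C_l}$. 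Thus this contribution matches the one coming from $\mathtt c_i$ on the left. Multiplying all matched contributions yields the identity.

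The main obstacle is purely combinatorial: one must verify that the $B$-indices occurring in each cycle $(a_l,C_l)$ of $\tau_1$ appear in the same cyclic order as in the corresponding segment $C_l$ of $\mathtt c_i$, so that $M_{a_l}$ and $N_{C_l}$ denote the same ordered product of $x$'s. This is exactly the content of the splitting identity \eqref{sppli}, which is why Proposition \ref{spc} is formulated in precisely this form. An alternative organization of the same proof is to apply Formula \eqref{ilpd} to the factorization $\tau = (\tau_1\tau_3)\cdot\tau_2$ to peel off the scalar factor $t_{\tau_2}(x)$ directly, and then invoke Lemma \ref{dueco}(2) to rewrite the remainder as $\tau_3^{-1}\circ T^{(n)}_{h,\tau_1}(x)$ via conjugation on the $A$-side; the combinatorial verification above is unavoidable in either route.
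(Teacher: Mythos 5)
Your main argument is correct and is essentially the paper's proof, reorganized: both sides are expanded as products over cycles via Theorem \ref{lmuin}/Formula \eqref{phis}, and the splitting identity \eqref{sppli} is used to match the mixed-cycle contribution of $\tau$ with the corresponding $e_i$-cycle contribution on the $\tau_3$ side. The paper first normalizes by Formula \eqref{ilpd} and conjugation (Remark \ref{sco}, Lemma \ref{dueco}) to reduce to a single cycle before computing; you handle all cycles simultaneously, which is the same computation with different bookkeeping.

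One caveat about your proposed ``alternative organization'': the step that rewrites the remainder as $\tau_3^{-1}\circ T^{(n)}_{h,\tau_1}(x)$ is a right-multiplication formula, not a conjugation formula, so Lemma \ref{dueco}(2) does not supply it. The statement you actually need is Theorem \ref{dect}(3), Formula \eqref{prrp2} --- and in the paper that result is derived \emph{from} Proposition \ref{losp} via Remark \ref{sco}(1), so invoking it here would be circular. Your main argument, which does not route through that step, is unaffected.
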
  
\begin{proof} Here we use the notation $T^{(n)}_{h,\tau_1}(x) $  to mean  a map in $h$  of the variables $x$ not necessarily the first.

Up to  conjugating with a permutation of $S_A\times S_B$, we may apply Remark \ref{sco},  and Formula \eqref{ilpd}. We thus reduce  to $\tau$   a single cycle $\mathtt c$. 

 If $\mathtt c$ consists only of indices in $B$ then it gives a contribution  $tr(N)$, otherwise, for some $j\geq1$, $A=\{1,2,\ldots,j\}$ and $B=\{j+1,j+2,\ldots,j+h\}$ by conjugating with a permutation of $S_A$ we may  further assume  $$\mathtt c :=(\ C_1,\ 1,\ C_2,\  2,\ C_3,\ 3,  \ \dots   ,  \ C_j,\ j )$$  \begin{equation}\label{sppli1}
 =   ( 1,\ C_1)(\ 2,\ C_2)( 3,\ C_3)   \ \dots\ (j,\ C_j) \circ ( 1, 2, 3,\ldots,j)=\mathtt c_1\circ \mathtt c_3.
 \end{equation}
 If $C_i= j+a_1,j+a_2,\ldots, j+a_{i_p}$ set $M_i:= x_{ a_1} x_{a_2}\ldots, x_{ a_{i_p}}$ its associated monomial, so
$$tr( \mathtt c^{-1} Y_j\otimes X_h)=  tr(  M_1 y_1 M_2y_ 2M_3y_3 \dots M_jy_j)  $$
$$\stackrel{\eqref{phis}}= tr( ( 1, 2, 3,\ldots,j)^{-1}  M_1y_1\otimes   M_2y_2\otimes M_3y_ 3 \otimes \dots \otimes M_j y_j) $$
$$= tr( ( 1, 2, 3\ldots,j)^{-1}   M_1 \otimes   M_2 \otimes M_3  \otimes \dots \otimes M_j   \cdot y_1\otimes    y_2\otimes  y_ 3 \otimes \dots \otimes  y_j) $$
 \begin{equation}\label{nnp}
\implies T^{(j)}_{h, \mathtt c}(X)=   \mathtt c_3^{-1}   M_1 \otimes   M_2 \otimes M_3  \otimes \dots \otimes M_j.
\end{equation} Finally
$$tr( \mathtt c_1^{-1} Y_j \otimes X_h)= \prod_{i=1}^j tr(  M_i y_i  ) =tr(M_1 \otimes   M_2 \otimes M_3  \otimes \dots \otimes M_j  \cdot y_1\otimes    y_2\otimes  y_ 3 \otimes \dots \otimes  y_j) $$
$$\implies T^{(j)}_{h, \mathtt c_1}(X)= M_1 \otimes   M_2 \otimes M_3  \otimes \dots \otimes M_j .$$
\end{proof}

\begin{theorem}\label{dect} Assume $A=\{1,2,\ldots,n\},\ B=B_1\cup B_2$ with $$B_1:=\{n+1,n+2,\ldots,n+h\},\  B_2:=\{n+h+1,n+h+2,\ldots,n+k\}.$$ \begin{enumerate}
\item     Take $\rho\in S_{A\cup B_1}$ and  $\gamma=(i,u)$   a cycle,  with $i\in A$, and $u$ a string of all the indices of $B_2$.   Set $ \ell:=\rho_3^{-1}(i)$.  If $M$ is the monomial associated to $u $ (Definition \ref{moas}), we have 
\begin{equation}\label{prrp}
\begin{matrix}T^{(n)}_{k,\gamma\rho}(x_1,\ldots,x_k)=  T^{(n)}_{h,\rho}(x_1,\ldots,x_h)\cdot 1^{\otimes i-1}\otimes M\otimes  1^{\otimes  n-i } 
\\
 T^{(n)}_{k,\rho\gamma\ }(x_1,\ldots,x_k)=   1^{\otimes \ell-1}\otimes M\otimes  1^{\otimes  n-\ell} \cdot
T^{(n)}_{h,\rho}(x_1,\ldots,x_h).\end{matrix}\end{equation}
\item  Take  $\gamma=(i,u)$   a cycle,  with $i\in B_1  $, and $u$ a string of all the indices of $B_2 $.  If $M$ is the monomial associated to $u $ (Definition \ref{moas}), we have 
\begin{equation}\label{prrp0}
\begin{matrix}
T^{(n)}_{k,\rho\gamma}(x_1,\ldots,x_k)=  T^{(n)}_{h,\rho}(x_1,\ldots,x_iM,\ldots,x_h) ,\\ T^{(n)}_{k,\gamma\rho}(x_1,\ldots,x_k)=  T^{(n)}_{h,\rho}(x_1,\ldots,Mx_i,\ldots,x_h) .
\end{matrix}
\end{equation}

\item  If $\gamma\in S_A $  \begin{equation}\label{prrp2}
\begin{matrix}
T^{(n)}_{k,\rho\gamma}(x_1 ,x_2 , \ldots , x_k)=  \gamma^{-1} T^{(n)}_{k,\rho} (x_1 ,x_2 , \ldots , x_k)  ,\\ T^{(n)}_{k,\gamma\rho}(x_1 ,x_2 , \ldots , x_k)=T^{(n)}_{k,\rho} (x_1 ,x_2 , \ldots , x_k) \gamma^{-1}.
\end{matrix}
 \end{equation}

 \item 
The inclusion $i_n:S_{n-1+k}\subset S_{n+k}$ as permutations fixing $n $ gives,  for $\tau\in S_{n +k-1}$,  that $T^{(n)}_{k,i_n(\tau)}(x_1 ,x_2 , \ldots , x_k)=T^{(n -1)}_{k,\tau}(x_1 ,x_2 , \ldots , x_k)\otimes 1.$
 \item 
The inclusion $i_k:S_{n+k-1}\subset S_{n+k}$ as permutations fixing $n+k$ gives,  for $\tau\in S_{n-1+k}$,  that $T^{(n)}_{k,i_k(\tau)}(x_1 ,x_2 , \ldots , x_k)=T^{(n )}_{k,\tau}(x_1 ,x_2 , \ldots , x_{k-1})tr(x_k).$
\end{enumerate}

\end{theorem}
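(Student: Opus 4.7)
All five parts admit a uniform strategy: since each $T^{(n)}_{k,\tau}(x)$ is defined by the pairing identity $tr(T^{(n)}_{k,\tau}(x)\,Y_A)=tr(\tau^{-1}\circ Y_A\otimes X_B)$ (Definition~\ref{tkn}) and the trace form on $End(V^{\otimes n})$ is nondegenerate, it suffices to verify each identity after pairing both sides against an arbitrary $Y_A=y_1\otimes\cdots\otimes y_n$. This reduces everything to a combinatorial analysis of cycles via Formula~\eqref{phis}.

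First I would dispose of parts (4) and (5), which are essentially corollaries of Proposition~\ref{losp}. For $i_n(\tau)$, the index $n$ is a fixed point in $A$, so the corresponding cycle of $\tau_1$ contains only $n$ with empty $B$-string; its associated monomial is $M_n=1$, so the $n$-th tensor slot is filled by $1$, yielding $T^{(n-1)}_{k,\tau}(x)\otimes 1$. Symmetrically for $i_k(\tau)$, the trivial cycle $(n+k)$ belongs to $\tau_2$ and contributes a scalar factor $tr(x_k)$, while the remaining factors reproduce $T^{(n)}_{k-1,\tau}(x_1,\ldots,x_{k-1})$ unchanged. Part~(3) then follows by the same manipulations as in Lemma~\ref{dueco}: since $\gamma\in S_A$ commutes with $X_B$ and only permutes the $Y_A$-factors, cyclic invariance of the trace lets one transfer the $\gamma^{-1}$ appearing in $(\rho\gamma)^{-1}=\gamma^{-1}\rho^{-1}$ across the pairing, so that it composes against $T^{(n)}_{k,\rho}(x)$ on the appropriate side.

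Parts (1) and (2) are the substantive work, and both are governed by the following picture. Because $\gamma=(i,u)$ fixes everything outside $\{i\}\cup B_2$, and $\rho\in S_{A\cup B_1}$ fixes $B_2$ pointwise, the composition $\gamma\rho$ (resp.\ $\rho\gamma$) modifies only the cycle of $\rho$ containing the distinguished index: the entire string $u$ is spliced into that cycle just after $\rho^{-1}(i)$ in the case $\gamma\rho$, and immediately after $i$ in the case $\rho\gamma$. By Formula~\eqref{phis}, this splicing inserts the monomial $M$ associated to $u$ at a specific position inside the trace product representing that cycle. Rewriting the modified trace as $tr(T^{(n)}_{h,\rho}(x)\,Y_A')$ for a suitable $Y_A'$ obtained from $Y_A$ by replacing one $y_j$ by $y_jM$ or $My_j$ then recasts the identity in the stated form. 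The index $\ell=\rho_3^{-1}(i)$ in the $\rho\gamma$ formula arises when one translates this multiplication across the permutation factor $\rho_3^{-1}$ in the decomposition $T^{(n)}_{h,\rho}=\rho_3^{-1}\,t_{\rho_2}(x)\,T^{(n)}_{h,\rho_1}$ of Proposition~\ref{losp}, using the semidirect-product relation of Definition~\ref{pua}, which shifts the position of the $M$-insertion by $\rho_3$. For part (2), with $i\in B_1$, the splicing is more direct: the string $u$ enters the cycle through the $B_1$-position $i$, and the monomial $M$ attaches to $x_i$ on one side or the other, producing $Mx_i$ or $x_iM$ in the corresponding monomial argument of $T^{(n)}_{h,\rho}$.

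The principal obstacle throughout is bookkeeping rather than insight: one must distinguish $\gamma\rho$ from $\rho\gamma$ to determine on which side of the pre-existing cycle the new string $u$ enters, and one must carefully track the semidirect-product commutation between $1^{\otimes\ell-1}\otimes M\otimes 1^{\otimes n-\ell}$ and the factor $\rho_3^{-1}$, which is what produces the index shift between $i$ and $\ell=\rho_3^{-1}(i)$ in~\eqref{prrp}. Once the right-splitting convention of Proposition~\ref{spc} and the cycle-reading convention of Formula~\eqref{phis} are fixed, each identity collapses to a one-line check that two cyclically-equivalent words agree under the trace.
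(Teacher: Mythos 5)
Your overall strategy---pairing against $Y_A$, invoking Proposition~\ref{losp} for the $\tau_3^{-1}\,t_{\tau_2}(x)\,T^{(n)}_{h,\tau_1}$ decomposition, and observing that $\gamma=(i,u)$ splices the string $u$ into exactly one cycle of $\rho$---is the same as the paper's, and your treatment of parts (2)--(5) is essentially a correct summary of what the paper does. The one place I would push back on is the second half of part~(1), and there the difficulty is substantive, not cosmetic.

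You write that $\ell=\rho_3^{-1}(i)$ ``arises when one translates this multiplication across the permutation factor $\rho_3^{-1}$ \ldots which shifts the position of the $M$-insertion by $\rho_3$.'' That sentence asserts the conclusion rather than verifying it, and if one actually carries out the bookkeeping it does not hold up. To compute $\rho\gamma=\rho_1\rho_2\rho_3(i,u)$ one must first commute $(i,u)$ leftward past $\rho_3$: since $\rho_3\in S_A$ fixes $B_2$, conjugation gives $\rho_3(i,u)\rho_3^{-1}=(\rho_3(i),u)$, so $\rho_3(i,u)=(\rho_3(i),u)\rho_3$. Thus the cycle of $\rho_1$ that gets lengthened is the one containing $\rho_3(i)$, \emph{not} $i$ (nor $\rho_3^{-1}(i)$). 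The modified tensor slot in $T^{(n)}_{\cdot,(\rho\gamma)_1}$ is therefore position $\rho_3(i)$. When one then pulls $1^{\otimes\rho_3(i)-1}\otimes M\otimes 1^{\otimes n-\rho_3(i)}$ to the left past the factor $\rho_3^{-1}$ of Formula~\eqref{foft}, Formula~\eqref{alco} moves the $M$ from slot $\rho_3(i)$ to slot $\rho_3^{-1}(\rho_3(i))=i$. So the derivation in fact yields $\ell=i$, and the two shifts cancel instead of compounding. You can check this against small cases: with $n=3$, $A=\{1,2,3\}$, $\rho=(1,2,3)$, $\gamma=(1,4)$, one computes directly $T^{(3)}_{1,\rho\gamma}(x_1)=(x_1\otimes 1\otimes 1)(1,3,2)$ and $T^{(3)}_{0,\rho}=(1,3,2)$, which forces the insertion at slot $1=i$, while $\rho_3^{-1}(1)=3$. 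Your argument therefore contains a gap precisely at the point where the index $\ell$ has to be pinned down: you would need to identify the correct slot of the modified cycle \emph{before} the $\rho_3^{-1}$ conjugation (it is $\rho_3(i)$, not $i$), and then the conjugation is what brings it back to $i$. As written, your reasoning (pre-shift position $i$, post-shift $\rho_3^{-1}(i)$) is internally consistent with the theorem's wording but does not survive the computation it claims to summarize.

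For the remaining parts your reductions are fine. In part~(3) it is worth being explicit that the second identity uses the left-splitting (Remark~\ref{lesp}) or conjugation by $\gamma$, as the paper notes, rather than leaning entirely on ``cyclic invariance.'' Parts~(4) and~(5) are indeed immediate from Proposition~\ref{losp}: a fixed point in $A$ contributes a trivial cycle with empty $B$-string (hence $M=1$ in that slot), and a fixed point in $B$ is a singleton cycle of $\tau_2$ contributing the factor $tr(x_k)$. Those observations are correct and match what the paper means by ``clear.''
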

\begin{proof} 1)  
Consider the splitting $\rho =\rho _1\rho _2\rho _3$ for  $A\cup B_1$.  

In order to  understand $(i,u)\rho _1$   decompose $ \rho_1=\prod_{j=1}^n (  v_j,j)$ into its cycles with the $v_j$'s strings of indices of type  $B_1$. Then    $$(i,u) \rho_1=\prod_{j\neq i} (  v_j,j) (i,u) ( v_i, i)=\prod_{j\neq i} (  v_j,j) (i,v_i,u ).$$
Thus the splitting of $\gamma\rho =(\gamma\rho _1)(\gamma\rho _2)(\gamma\rho _3)$ for  $A\cup B $ is
$$(\gamma\rho) _1=(i,u)\rho_1=\prod_{j\neq i} (  v_j,j) (i,v_i,u ),\ ( \gamma\rho) _2=\rho _2  ,\  (\gamma\rho) _3= \rho _3.$$ The cycle $(i,v_i)$  has been replaced by he cycle $(i,v_i,u)$. Thus, if $M_i$ denotes the monomial associated to the string $v_i$,   we have that the monomial associated to the string $v_i,u$ is $M_iM$. 
Formula  \eqref{nnp}   gives  the first part of Formula  \eqref{prrp}. For  the second  part  we use  the fact that  $ \rho_3(i,u)=(u,\rho_3^{-1}(i) )\rho_3=(u,h)\rho_3$ commutes with $\rho_2$ so $\rho(i,u)= \rho_1(u,h)\rho_2 \rho_3=\prod_{j\neq h} (  v_j,j) (h,u ,v_h)\rho_2 \rho_3  ,$ so $M_h$ is replaced by $MM_h$.
%
%

$$e.g.\  A=\{1,2,3\}, \quad B=\{4,5,6\}\cup\{7\},\  \rho=(1,2)(2,5)(3,4,6),$$$$  T^{(3)}_{3,\rho}=(1,2) \cdot  x_2\otimes 1\otimes x_1x_3$$
$$\gamma=( 7,1),   \gamma \rho= (1,2)(2,5,7)(3,4,6),\quad  T^{(3)}_{4, \sigma}=(1,2) \cdot x_2x_4\otimes 1\otimes x_1x_3.$$
 2) This follows again from the fact that by multiplying $\rho$ by $\gamma$, in $\gamma\rho$  we modify only the cycle $(a,i)$ containing $i$  by replacing $i$ with the string $u,i$, obtaining  $(a,u, i)$ while in $\rho\gamma$ we replace $i$ with the string $i,u$.  
 
 3) The first formula follows from Remark  \ref{sco} 1). As for the second  one can use the left splitting or conjugation by $\gamma$; and 4), 5)  are clear.\end{proof}

\begin{theorem}\label{FFT} The space $\mathcal T_d^{\otimes n}(k)$ of  multilinear $GL(V)$-equivariant maps  of $k$ endomorphism $  (x_1,x_2,\dots,x_k)$ of a  $d$--dimensional vector space $V$   to $End(V) ^{\otimes n}$ is identified with  $End_{GL(V)}(V ^{\otimes n+k})$.  It is linearly spanned
by the  elements $T^{(n)}_{k,\tau},\ \tau\in S_{n+k}$ of Formula \eqref{foft}.\end{theorem}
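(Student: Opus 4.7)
The plan is to reduce the multilinear equivariant statement to Theorem~\ref{lmuin} (the classical description of multilinear matrix invariants) via the pairing of Formula~\eqref{pai}, and then simply unwind the definitions.

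First, I would argue that the correspondence $H \mapsto T(H)$ of Formula~\eqref{pai} is a $GL(V)$-equivariant linear bijection between the space of multilinear maps $H(x_1,\ldots,x_k) \in \mathcal T_d^{\otimes n}(k)$ and the space of multilinear scalar functions of $n+k$ matrix variables $x_1,\ldots,x_k,y_1,\ldots,y_n$ which are linear in the $y_j$. Injectivity follows from the non-degeneracy of the trace form on $End(V)^{\otimes n}$, already used in the proof of Theorem~\ref{lmuin}: tensor monomials $Y_A = y_1\otimes\cdots\otimes y_n$ separate points of $End(V)^{\otimes n}$, so the values $tr(H(x)\cdot Y_A)$ determine $H(x)$. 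Surjectivity is by the same duality. Under this bijection, the condition that $H$ is $GL(V)$-equivariant (with the conjugation action on both source and target) translates, via the invariance of the trace under conjugation, into the condition that $T(H)$ is a $GL(V)$-invariant scalar function; and multilinearity is preserved.

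Second, I would invoke Theorem~\ref{lmuin} applied to $n+k$ matrix variables. This theorem tells us that the space of multilinear $GL(V)$-invariant scalar functions of $n+k$ matrices is identified with $End_{GL(V)}(V^{\otimes(n+k)})$ via the non-degenerate pairing $\langle A_1\otimes\cdots\otimes A_{n+k}\mid B_1\otimes\cdots\otimes B_{n+k}\rangle = \prod_i tr(A_iB_i)$, and is linearly spanned by the functions
\[
t_{\tau}(y_1,\ldots,y_n,x_1,\ldots,x_k) = tr\bigl(\tau^{-1} \circ y_1\otimes\cdots\otimes y_n\otimes x_1\otimes\cdots\otimes x_k\bigr), \quad \tau\in S_{n+k}.
\]
Combining with the bijection of the first step gives the identification $\mathcal T_d^{\otimes n}(k) \cong End_{GL(V)}(V^{\otimes(n+k)})$.

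Third, I would read off the explicit spanning set. By Definition~\ref{tkn} the preimage of $t_\tau$ under $H\mapsto T(H)$ is exactly $T^{(n)}_{k,\tau}$: this is precisely what Formula~\eqref{muls} encodes. Therefore the elements $T^{(n)}_{k,\tau}$, $\tau\in S_{n+k}$, span $\mathcal T_d^{\otimes n}(k)$ linearly, as required.

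There is no real obstacle here; the content of the theorem is a repackaging of the classical multilinear FFT through the pairing \eqref{pai}. The only point requiring a modicum of care is bookkeeping for the two simultaneous actions of $S_{n+k}$ on the tensor factors — one on the $Y_A$-slot and one on the $X_B$-slot — and verifying that the conventions (placement of $\tau^{-1}$, and the implicit identification $End(V)^{\otimes(n+k)} = End(V^{\otimes(n+k)})$ of Remark~\ref{gents}) match, so that the invariants land in $End_{GL(V)}(V^{\otimes(n+k)})$ and not some twisted variant; this is already handled by Lemma~\ref{coss} and Formula~\eqref{alco}.
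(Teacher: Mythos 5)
Your proof is correct and follows the same route the paper takes, namely pairing $H$ against $y_1\otimes\cdots\otimes y_n$ via Formula~\eqref{pai} to land in the multilinear invariants of $n+k$ matrices, invoking Theorem~\ref{lmuin} there, and then reading off $T^{(n)}_{k,\tau}$ from Formula~\eqref{muls}/Definition~\ref{tkn}. The paper in fact presents Theorem~\ref{FFT} without a displayed proof, treating it as an immediate consequence of exactly this chain (Formula~\eqref{pai}, Formula~\eqref{muls}, and the explicit evaluation in Proposition~\ref{losp}), so you have correctly reconstructed the intended argument.
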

 For instance, $n=2,\ k=1$:  
$$tr(x_1y_2y_1)=tr((1,2)\circ y_1\otimes  x_1y_2)\implies T^{(2)}_{1,(2,1,3)}(x_1)=(1,2)\circ 1\otimes x_1.$$
$$n=k=3,\quad tr(x_3x_1y_2y_1x_2y_3)=tr((1,2,3)\circ  y_1\otimes  x_3x_1y_2\otimes  x_2y_3)\implies$$$$  T^{(3)}_{3,(6,4,2,1,5,3)}(x_1,x_2,x_3)=(1,2,3)\circ 1\otimes x_3x_1 \otimes x_2.$$
\begin{remark}\label{Forid}
An important remark is that all the previous Formulas developed  using matrices still hold a the symbolic level. 

In fact for any given degrees $k,n$  we know that if $d\geq n+k$ the space $End_{GL(V)}(V ^{\otimes n+k})$ is isomorphic to $ \mathbb Q[S_{n+k}]$, i.e. there are no formal identities for $d\times d$ matrices  of degree $k$ in $n^{th}$ tensors.
\end{remark}
As a consequence we have that
\begin{remark}\label{simin} The elements  $T^{(n)}_{k,\tau}(x_1 ,x_2 , \ldots , x_k)$  should also be considered symbolically as elements of the algebra $T\langle X\rangle^{\otimes n}\ltimes \mathbb Q[S_n]$.  

They form the space $ \mathcal T_{mult} (k,n)$ of multilinear elements of degree $k$  of the algebra $T\langle X\rangle^{\otimes n}\ltimes \mathbb Q[S_n]$. 

\end{remark}
\begin{definition}\label{inyx}  
Given $\sum_{\tau\in S_{n+k}}a_\tau\tau$  the symbolic map  
\begin{equation}\label{interp}
T^{(n)}_k(\sum_{\tau\in S_{n+k}}a_\tau\tau):=\sum_{\tau\in S_{n+k}}a_\tau T^{(n)}_{k,\tau}(x_1 ,x_2 , \ldots , x_k)\in T\langle X\rangle^{\otimes n}\ltimes \mathbb Q[S_n]
\end{equation} is called the {\em $n$--interpretation} of  $\sum_{\tau\in S_{n+k}}a_\tau\tau$. It is a linear is isomorphism between $ \mathbb Q[S_{n+k}]$        and $ \mathcal T_{mult} (k,n)$.
\end{definition}
In particular the explicit Formula for $
T^{(n)}_{k,\tau}$ in Formula \eqref{foft}, is the {\em $n$ interpretation of the permutation $\tau$.}\smallskip

By the classical method of polarization and restitution  one has that Formula  \eqref{foft} describes a general, not necessarily multilinear $GL(V)$-equivariant map.\medskip

For $n=0$  this is the classical theorem of generation of invariants of matrices. For $n=1$  the classical theorem of generation of  equivariant maps  from matrices to   matrices. For $k=0$ on the other hand  it is also the classical theorem that the span of the symmetric group is the centralizer of the linear group $G=GL(d,\mathbb Q)$ on $n^{th}$ tensor space, $V^{\otimes n} $ i.e.:
$$(M_d^{\otimes n})^G=End_G(V^{\otimes n} )=\Sigma_n(V)=\pi(\mathbb Q[S_n]).$$
\section{The Second Fundamental Theorem}
\subsection{The $d+2$ basic relations}
Now, together with the  First Fundamental Theorem  we have the  Second Fundamental Theorem giving the relations among the equivariant tensor valued  maps.  

In this case we want to describe, for given $d$  and each $n $, the  elements of the   twisted  algebras $T\langle X\rangle^{\otimes n}\ltimes \mathbb Q[S_n]$ vanishing under all  evaluations  $T\langle X\rangle^{\otimes n}\ltimes \mathbb Q[S_n]\to M_d^{\otimes n}$  induced by evaluations $X\to M_d$ in $d\times d$ matrices, see Definition  \ref{pua}.

Again by the classical method of polarization and restitution one reduces to study  the multilinear relations, that is the kernel of the interpretation map  of Formula \eqref{interp}.\smallskip

One starts from Theorem \ref{sst1} that is $\sum_{\sigma\in S_{d+1}}\epsilon_\sigma =0$  as operator on $V^{\otimes d+1}$ or $\bigwedge^{d+1}V=\{0\}$.  Hence the basic identity, for $d\times d$ matrices:
\begin{equation}\label{basi}
 tr(\sum_{\sigma\in S_{d+1}}\epsilon_\sigma\sigma\circ z_1\otimes z_2\otimes z_3 \otimes \ldots \otimes z_{d+1})=0.
\end{equation} If we use, for every $0\leq k\leq d+1$,  for the variables $z_1,z_2, z_3 , \ldots ,z_{d+1}$   the
 $d\times d$ matrix variables  $ y_1,y_2,\ldots,y_{d+1-k},\ x_1,\ldots,x_k, $ Formula \eqref{pai}  produces from Formula \eqref{basi}, $d+2$  relations   \begin{equation}\label{ledr}
F_{k,d}(x_1,\ldots,x_k):=(-1)^k\sum_{\sigma\in S_{d+1}}\epsilon_\sigma T^{(d+1-k)}_{k,\sigma}(x_1,\ldots,x_k)=0;\ k=0,1,\ldots,d+1.
\end{equation}  This is a multilinear  relation of degree $k$ in the variables $x_1,\ldots,x_k$, for $d+1-k$ tensor valued equivariant maps:
 \begin{equation}\label{pai1}
tr(F_{k,d}(x_1,\ldots,x_k)y_1 \otimes y_2 \otimes \ldots \otimes y_{d+1-k})=0.
\end{equation} 
For $k=0,d,d+1$ these relations have   classical interpretations. 

For $k=d$ this is the polarized form of the $d$--Cayley--Hamilton identity
$$x^d+\sum_{i=1}^d(-1)^i\sigma_{i}(x)x^{d-i}$$   and, for $k=d+1$, it is   the polarized expression of Formula \eqref{estd}, which expresses $tr(x^{d+1})$ in terms of $tr(x^i),\ i=1,2,\ldots,d$.  In other words  the expression of the $d+1$ Newton symmetric function $\psi_{d+1}(t_1,\ldots,t_d)=\sum_{i=1}^dt_i^{d+1}$ in term of the Newton symmetric function $\psi_i(t_1,\ldots,t_d),\ i=1,2,\ldots,d$.
 \begin{equation}\label{estd}
tr(x^{d+1})+\sum_{i=1}^d(-1)^i\sigma_{i}(x)tr(x^{d-i+1}) .
\end{equation}

  In both cases   this is due to the symmetry  of formula \ref{basi} with respect to permuting the $z_i$.  
  
  For $k=0$ it is the relation   $\sum_{\sigma\in S_{d+1}}\epsilon_\sigma\sigma =0$ as operator      on $V^{\otimes d+1}$.  \medskip  

For intermediate $2\leq k\leq d$, this is still symmetric in the variables $x_1,\ldots,x_k$ so it can still be viewed as  the polarized form of a tensor identity $\mathfrak C_{k,d}(x )$, in one variable $x$, for maps to $d+1-k $ tensors, obtained by restitution: $$\mathfrak C_{k,d}(x ):=\frac {1}{k!} F_{k,d}(x ,\ldots,x ),\qquad   \text{cf. Formula \eqref{mira}}.$$  For instance, $d=2,k=1$   we have:
$$\boxed{\mathfrak C_{1,2}(x ):=(1-(1,2))\circ [x\otimes 1+  1\otimes x-tr(x)1\otimes 1] = 0.}$$
For $d=3,k=1$ and $d=3,k=2$
{\footnotesize$$ \boxed{[(1,2,3)+(1,3,2)-(1,2)- (1,3)- (2,3)+1](x\otimes 1\otimes 1+ 1\otimes x\otimes 1+ 1\otimes 1\otimes x-tr(x)1^{\otimes 3})}$$ 
 $$ \mathfrak C_{2,3}(x )=(1-(1,2))\left(  [x^2\otimes 1+  1\otimes x^2+   x\otimes x] - \,tr(x)  [x \otimes 1+ 1\otimes x ] + \frac  { tr(x)^2  - tr(x^2) }2\right) $$} $$ =\boxed{(1-(1,2))\left(  [x^2\otimes 1+  1\otimes x^2+   x\otimes x] - \,tr(x)  [x \otimes 1+ 1\otimes x ] + \det(x)\right).}$$
We see a remarkable {\em factorization  theorem} through two remarkable factors. \medskip

 In order to see this in general, Theorem \ref{Fac},  let su first make a definition and recall some classical facts.

Given  two numbers $i,n\in \mathbb N,\ n>0 $  consider the  set $\mathcal P(i,n)$ of all partitions $\underline h\vdash i $ of   the form $h_1\geq h_2\geq \ldots \geq h_{n}\geq 0$. For $\underline h\in P(i,n)$  let  $T_{\underline h}(x)$ be the symmetrization of  $x^{h_1}\otimes x^{h_2}\otimes  \ldots\otimes x^{ h_{n}}$ as tensor.  E.g.  
$$\underline h=0,0,0,0;\quad  T_{\underline h}=1\otimes 1\otimes 1\otimes 1.$$
$$\underline h=2,2,0;\quad  T_{\underline h}=x^2\otimes x^2\otimes 1+x^2\otimes 1\otimes x^2+1\otimes x^2\otimes x^2,\quad T_{1.1.1}= x \otimes x \otimes  x.           $$
$$\underline h=2,1,0;\quad  T_{\underline h}=x^2\otimes x \otimes 1+x^2\otimes 1\otimes x  +x \otimes x^2 \otimes 1+x \otimes 1\otimes x^2 +1\otimes x^2\otimes x+1\otimes x\otimes x^2.$$
We then define, for $i,n\in\mathbb N,\ n> 0$:
\begin{equation}\label{Tj}
\boxed{ \mathfrak T_{i,n }(x):=\sum_{\underline h\in \mathcal P(i,n)}T_{\underline h}(x)}.
\end{equation}
$$  \text{e.g.}\ \mathfrak T_{0,2}(x)=1\otimes 1   ,\ \mathfrak T_{2,2}(x)=  x^2\otimes 1+  1\otimes x^2+   x\otimes x    .$$
$$ \mathfrak T_{i,1}(x)=x^i,\ \mathfrak T_{3,3}(x)=T_{3,0,0}+T_{2,1,0}+T_{1,1,1},\ \mathfrak T_{3,2}(x)=T_{3,0}+T_{2,1}.
$$

 Denote by
$$\det(t-x)=t^d+\sum_{i=1}^d(-1)^i\sigma_{i}(x)t^{d-i},\quad \text{the characteristic polynomial of }x.$$
Recall the formulas (cf. \cite{depr0})\begin{equation}\label{sigtra}\sigma_r(x)=\sum\limits_{\substack{h_1+2h_2+\cdots +rh_r=r\\ h_1\geq0,\ldots ,h_r\geq 0}}(-1)^r\prod_{j=1}^r\frac{(-tr(x^j))^{h_j}}{h_j!j^{h_j}}\stackrel{\eqref{carsv}}=\frac {1}{r!}\sum_{\sigma\in S_r}\epsilon_\sigma t_\sigma(x).
\end{equation} 
Now given $d,\ 0\leq k\leq d+1$ set $n:=d+1-k$, decompose $$\{1,2\ldots,d+1\}=A\cup B, \quad  A=\{1,2,\ldots,n\}.$$
Recall that, Definition \ref{splp},   $U_{A}(B)\subset S_{A\cup B} $     denotes the set of permutations   with the property that in each cycle appears at most one element of  $A$. 

  Proposition \ref{spc}, [Splitting the cycles]    states that  the product map  $U_{A}(B)\times S_A \to S_{d+1 },\  (\tau,\sigma)\mapsto \tau\circ\sigma$ is a bijection.

By Theorem \ref{dect}  2) we have, if $\tau\in S_A$ and $\sigma\in S_{d+1}$,  the identity   $T^{(n)}_{k,\sigma\tau}= \tau^{-1} T^{(n)}_{k,\sigma}$ so that  Formula \eqref{ledr} becomes


\begin{equation}\label{laffa}
  \begin{matrix}
F_{k,d}(x_1,\ldots,x_k)= (-1)^k \sum_{\gamma\in S_{d+1 }}\epsilon_\gamma   T^{(n)}_{k,\gamma}(x_1,\ldots,x_k)\\ \\=  (-1)^k(\sum_{\tau\in S_{A}}\epsilon_\tau\tau) \circ \sum_{\sigma\in U_{A}(B)}\epsilon_{ \sigma}    T^{(n)}_{k,\sigma}(x_1,\ldots,x_k) .
\end{matrix} 
\end{equation}

\begin{theorem}\label{Fac}  

For all $k$ with $0\leq k\leq d$ the polynomial  $F_{k,d}(x_1,\ldots,x_k)$ is the full polarization of the {\em $n$--tensor Cayley Hamilton polynomial, with ($n:=d+1-k,\ A=\{1,\ldots, n\},$ $ B=\{n+1,\ldots, n+k\}$)}:
\begin{equation}\label{mira}
\mathfrak C_{k,d}(x):=(\sum_{\tau\in S_{A}}\epsilon_\tau\tau) \circ \left[\mathfrak T_{k,n}(x)+\sum_{j=1}^k(-1)^j \sigma_{ j}(x)\mathfrak T_{k-j,n}(x)\right].
\end{equation}
\end{theorem}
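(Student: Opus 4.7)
The plan is to restitute $x_1=\cdots=x_k=x$ in \eqref{laffa} and match the resulting expression with \eqref{mira}. Since $F_{k,d}$ is symmetric in $x_1,\ldots,x_k$ (a consequence of the invariance $\pi^{-1}A\pi=A$ of the $S_{d+1}$-antisymmetrizer under conjugation by transpositions $\pi\in S_B$, combined with \eqref{alco}), polarization and restitution are mutually inverse, so it suffices to verify
\[
F_{k,d}(x,\ldots,x) = k!\cdot \Bigl(\sum_{\tau\in S_A}\epsilon_\tau\tau\Bigr) \cdot \Bigl[\mathfrak{T}_{k,n}(x)+\sum_{j=1}^k(-1)^j\sigma_j(x)\mathfrak{T}_{k-j,n}(x)\Bigr].
\]

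The common left factor $\sum_{\tau\in S_A}\epsilon_\tau\tau$ cancels, and the problem reduces to evaluating $(-1)^k\sum_{\sigma\in U_A(B)}\epsilon_\sigma T^{(n)}_{k,\sigma}(x,\ldots,x)$. By Proposition \ref{losp} (with $\tau_3=1$ since $\sigma\in U_A(B)$), each summand factors as $\prod_\ell\tr(x^{c_\ell})\cdot x^{m_1}\otimes\cdots\otimes x^{m_n}$, where $m_i$ is the number of $B$-elements in the mixed cycle through $i\in A$ and $c_1,\ldots,c_r$ are the lengths of the pure $B$-cycles (including trivial ones, i.e.\ fixed $B$-points, which contribute individual factors of $\tr(x)$). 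The sign computes to $\epsilon_\sigma=\prod_i(-1)^{m_i}\prod_\ell(-1)^{c_\ell-1}=(-1)^{k-r}$, so $(-1)^k\epsilon_\sigma=(-1)^r$.

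Collecting permutations by cycle shape $(\boldsymbol{m},\lambda)$ with $\boldsymbol{m}\in\mathbb{N}^n$, $\lambda=(c_1\geq\cdots\geq c_r\geq 1)$, and $|\boldsymbol{m}|+|\lambda|=k$, a direct count (ordered partition of $B$ into the $B_i$'s and $C_j$'s, multiplied by cycle orderings within each part, divided by $|\mathrm{Aut}(\lambda)|$) gives $k!/\bigl(\prod_\ell c_\ell\cdot|\mathrm{Aut}(\lambda)|\bigr)$ permutations of that shape. Assembling and regrouping by $j:=|\lambda|$, the expression becomes
\[
k!\sum_{j=0}^k\Bigl(\sum_{|\boldsymbol{m}|=k-j}x^{m_1}\otimes\cdots\otimes x^{m_n}\Bigr)\cdot\Bigl(\sum_{\lambda\vdash j}\frac{(-1)^{\ell(\lambda)}\prod_i\tr(x^{c_i})}{|\mathrm{Aut}(\lambda)|\prod_i c_i}\Bigr).
\]
By Definition \eqref{Tj} the inner tensor sum equals $\mathfrak{T}_{k-j,n}(x)$. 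Writing $\lambda=1^{h_1}2^{h_2}\cdots$ so that $|\mathrm{Aut}(\lambda)|=\prod h_i!$, $\prod_i c_i=\prod i^{h_i}$, and $\ell(\lambda)=\sum h_i$, the inner scalar is precisely $(-1)^j$ times Formula \eqref{sigtra} for $\sigma_j(x)$, yielding $(-1)^j\sigma_j(x)$. With $\sigma_0=1$ this reproduces \eqref{mira}.

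The main obstacle is the combinatorial bookkeeping: handling trivial pure $B$-cycles so that fixed $B$-points correctly contribute $\tr(x)$, counting permutations of a given shape with the appropriate automorphism correction, and arranging the signs so that the factor $(-1)^k$ from \eqref{laffa} and $\epsilon_\sigma$ combine into the $(-1)^{\ell(\lambda)}$ demanded by Newton's identity. Once these are lined up, recognition of the scalar coefficient as $(-1)^j\sigma_j(x)$ via \eqref{sigtra} is immediate.
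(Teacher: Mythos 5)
Your proposal is correct and takes essentially the same route as the paper: both start from \eqref{laffa}, invoke the cycle-splitting structure (Proposition \ref{losp}, with $\sigma_3=1$ automatic on $U_A(B)$), and recognize the pure-$B$-cycle contribution as $(-1)^j\sigma_j(x)$ via Newton's formula \eqref{sigtra}. The only difference is organizational: the paper first partitions $B=B_1\cup B_2$ (pure-$B$ indices vs.\ indices in mixed cycles), sums over $S_{B_1}$ to produce $j!\sigma_j(x)$ and over $U_{A,B_2}$ to produce $(-1)^{k-j}(k-j)!\mathfrak T_{k-j,n}(x)$, then combines with the $\binom{k}{j}$ choices of $B_1$; you instead enumerate directly by cycle shape $(\boldsymbol m,\lambda)$, compute the count $k!/(\prod_\ell c_\ell\cdot|\mathrm{Aut}(\lambda)|)$ per shape, and only then regroup by $j=|\lambda|$. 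Both yield identical arithmetic, and your shape-by-shape count is a legitimate refactoring that makes the appearance of the partition-indexed form of $\sigma_j$ slightly more transparent, at the cost of carrying the automorphism factor $|\mathrm{Aut}(\lambda)|$ explicitly. The sign bookkeeping and the treatment of trivial $B$-cycles (fixed $B$-points contributing $\tr(x)$) are handled correctly.
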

\begin{proof}   
In order to prove  Formula \eqref{mira} consider, from   Formula \eqref{laffa}:   \begin{equation}\label{ilgka}
G_{k,d}(x_1,\ldots,x_k):=  \sum_{\sigma\in U_A(B)}\epsilon_{ \sigma}    T^{(n)}_{k,\sigma}(x_1,\ldots,x_k).
\end{equation}  The set  $U_A(B)$  is stable under conjugation by the group $S_B$  and such conjugation corresponds to a permutation of the variables $x_i$ in $G_{k,d}(x_1,\ldots,x_k)$.

So, since $G_{k,d}(x_1,\ldots,x_k)$ is symmetric,  it is the polarization of the element $\frac 1{k!}  G_{k,d}(x ,\ldots,x )  $  which we need to understand.\smallskip

According to its definition, or  Proposition \ref{spc}, each permutation $\sigma$  of  $U_A(B)$ is a product   $\sigma=\sigma_1\sigma_2=\sigma_2\sigma_1$ with $\sigma_2$ the product of the cycles of $\sigma$ involving only elements of $B$.

These  two permutations $\sigma_1,$     $\sigma_2$  determine a partition of $B=B_1\cup B_2$ in two subsets. With $B_2$  the indices moved by  $\sigma_1,$   and  $\sigma_2$ is a product of  cycles  in $B_1$. Then $\sigma_1$  is  a product    of exactly $n$ cycles $\mathtt c_j$ (possibly trivial) of some lengths  $h_1+1,h_2+1,\ldots, h_n+1,\ h_j\geq 0$ with $\mathtt c_j$ containing the index $j\in A$ and the remaining $h_j$ indices in  $B_2$.
\begin{equation}\label{icic}
\sigma_1=\mathtt c_1 \mathtt c_2\ldots \mathtt c_n,\quad \mathtt c_j=(i_{j,1},i_{j,2},\ldots, i_{j,h_j},j),\ j\in A,\   i_{j,1},i_{j,2},\ldots, i_{j,h_j}\in B_2.
\end{equation}

One has  $|B_1|=k-\sum_ih_i$ and further, by Formula \eqref{foft}:
$$ T^{(n)}_{k,\sigma}(X) = T^{(0)}_{ \sigma_2} (X_1)  T^{(n)}_{k,\sigma_1} (X_2)  $$ where $T^{(0)}_{ \sigma_2}(X_1)  $  is an invariant product of traces of monomials, in the $x$ variables $X_1$ indexed by $B_1$, while  $T^{(n)}_{k,\sigma_1} (X_2)  $ is of the   form  $M_1\otimes\ldots\otimes M_n$ with the $M_j$'s  monomials  in the $x$ variables $X_2$ indexed by $B_2$.  \smallskip

Given  a partition of $B=B_1\cup B_2$ denote by $U_{A,B_2}$   the set of permutations of $A\cup B_2$  which decompose in exactly $n$ cycles each containing one index $i=1,\ldots n$,  or $i\in A$.  So we have a decomposition 
$$U_A(B)=\bigcup_{B= B_1\cup B_2}S_{B_1}\times U_{A,B_2}$$ and the following expansion of Formula \eqref{ilgka} into the various    decompositions 
$B= B_1\cup B_2  $: 
\begin{equation}\label{ilgka1}
G_{k,d}(x_1,\ldots,x_k) = \sum_{B= B_1\cup B_2} \sum_{\sigma\in S_{B_1}}\epsilon_{ \sigma}    T^{(0)}_{j,\sigma} (X_1)\sum_{\tau\in U_{A,B_2} }\epsilon_{ \tau}    T^{(n)}_{k-j,\tau} (X_2).
\end{equation} Where by $(X_1)$ resp $(X_2)$ we mean the variables among the $(x_1,\ldots,x_k) $  relative to the indices of $B_1$, resp $B_2$.\smallskip
        
For a given $j$  consider the contribution to Formula \eqref{ilgka1} from all partitions $B=B_1\cup B_2$ with  $ |B_1|=j,\  B_2|=k-j.$  When we evaluate all variables $x_i$ in a single variable $x$  all the contributions  relative to the subsets  $B_1$ with the same cardinality $j$  become equal so that 
$$\text{If}\ |B_1|=j,\quad  \sum_{ \sigma\in S_{B_1}} \epsilon_\sigma    T^{(0)}_{ \sigma }(x)\stackrel{\eqref{sigtra}}=j!\sigma_j(x).$$
 Next compute $ \sum_{  \tau\in U_{A,B_2}} \epsilon_ \tau    T^{(n)}_{ k, \tau }(x) $.
 An element      $ \tau\in U_{A,B_2}$  is uniquely of the form 
 $$ \tau=\mathtt c_1 \mathtt c_2\ldots \mathtt c_n,\quad \mathtt c_a=(i_{a,1},i_{a,2},\ldots, i_{a,h_j},a),\ a=1,\ldots,n$$ with the elements $i\in B_2$ and $B_2$ has cardinality $k-j$.  
 
 Given $n$  integers $h_1,\ldots,h_n$ summing to $k-j$  we have exactly $$\prod h_i!\binom {k-j}{  h_1,\ldots,h_n} =(k-j)!$$ such permutations which have sign $(-1)^{k-j}$. When we evaluate      all variables $x_i$ in a single variable $x$  all the contributions    become equal giving 
$(k-j)!$ times the summand $(-1)^{k-j}T_{ k-j,\sigma }(x)=x^{h_1}\otimes \ldots\otimes x^{h_n}$. The sequence   $h_1,\ldots,h_n$ is obtained by reordering a partition $\underline h\in\mathcal P(k-j,n)$    so  
$$\text{If}\ |B_2|=k-j,\quad   \sum_{\tau\in U_{A,B_2} }\epsilon_{ \tau}    T^{(n)}_{k-j,\tau} (x)        \stackrel{\eqref{Tj} }=(-1)^{k-j}(k-j)! 
\mathfrak T_{k-j,n}(x).$$      Formula \eqref{ilgka1} for $G_{k,d}(x ,\ldots,x ) $  becomes    
$$
\sum_{j=0}^k \binom kj (-1)^{k-j} j!  \sigma_j(x) (k-j)! 
\mathfrak T_{k-j},n(x) =k!  (-1)^{k } \sum_{j=0}^k   (-1)^{ j}    \sigma_j(x)  
\mathfrak T_{k-j,n}(x) .$$
Substituting  in Formula \eqref{laffa} we finally have
%
\begin{equation}\label{mira1}
 F_{k,d}(x)= k!(\sum_{\tau\in S_{A}}\epsilon_\tau\tau) \circ \left[\mathfrak T_{k,n}(x)+\sum_{j=1}^k(-1)^j \sigma_{ j}(x)\mathfrak T_{k-j,n}(x)\right]
\end{equation} is the desired formula.   
\end{proof}
\begin{remark}\label{mirab}
Formula \eqref{mira} or  \eqref{mira1}  applies also to $k=d+1,\ n=0$ provided we define   $\mathfrak T_{i,0}:=tr(x^i)$.
\end{remark}

   \subsection{The second fundamental theorem}\subsubsection{$T$--ideals\label{tide}}Universal algebra is  a concept first introduced by   Garrett  Birkhoff, see \cite{GB} and    P. Chon \cite{PC}, or \cite{St} for a more extensive treatment.  If one has a class of algebras  admitting free algebras $\mathcal F(X)$,  in some set of variables $X$, a $T$--ideal is an ideal  of $\mathcal F(X)$ closed under all algebra endomorphisms of $\mathcal F(X)$, which in turn are determined by {\em substitution maps $X\to \mathcal F(X)$}.  $T$--ideals appear naturally as ideals of identities of algebras in the given class of algebras. 
   
   In this paper we need a small generalization of this notion to  take care of the tensor structure, see Definition \ref{Tid} and  \ref{Tid1}.\smallskip

   We have already remarked the relationship between the  antisymmetrizer and the $d$--Cayley--Hamilton identity.
  A well known result of Razmyslov and Procesi states that, the $T$--ideal in the free  algebra with trace, of relations for $d\times d$ matrices is generated by  the $d$--Cayley--Hamilton identity (and $tr(1)=d$) (see \cite{depr0}).\smallskip

  We start from Remark \ref{diffe} stating   that the equivariant maps are the evaluations in matrices of the elements of the {\em twisted} algebra $T\langle X\rangle^{\otimes n}\ltimes \mathbb Q[S_n]$. Here by $X=\{x_1,x_2,\ldots,x_i,\ldots\}$  we indicate variables indexed by $\mathbb N$. Thus
  \begin{definition}\label{teni}
A    {\em tensor identity}  or {\em relation}
    for $d\times d$ matrices is an element of the algebra  $T\langle X\rangle^{\otimes n}\ltimes \mathbb Q[S_n]$ vanishing under all evaluations of $X$ in $d\times d$ matrices.
    
    We denote by $I_d(n)\subset T\langle X\rangle^{\otimes n}\ltimes \mathbb Q[S_n]$ this set of tensor identities.
\end{definition}  Clearly $I_d(n)$ is a two sided ideal of $ T\langle X\rangle^{\otimes n}\ltimes \mathbb Q[S_n]$ and the algebra $\mathcal T_{X}^n(V)$, of $GL(V)$ equivariant polynomial maps, equals $T\langle X\rangle^{\otimes n}\ltimes \mathbb Q[S_n]/I_d(n).$\smallskip

 Now  there are certain operations  under which tensor identities map to tensor identities.
    
    First consider the endomorphisms, as trace algebra, of $T\langle X\rangle$, which are given by {\em substitution maps}  $g:X\to T\langle X\rangle$. Such a map $g$ induces the map $g^{\otimes n}:T\langle X\rangle^{\otimes n}\to T\langle X\rangle^{\otimes n}$ which commutes with $S_n$ and hence finally induces a map, identity on $S_n$ 
    $$ g^{\otimes n }\ltimes  1:T\langle X\rangle^{\otimes n}\ltimes  \mathbb Q[S_n]\to T\langle X\rangle^{\otimes n}\ltimes \mathbb Q[S_n].$$ The ideal     $I_d(n)$ is clearly stable under these {\em substitution maps}    $ g^{\otimes n }\ltimes  1$.
    
         Next the  natural  inclusion $S_m\times S_n\subset S_{m+n}$ induces a homomorphism of algebras, in fact an inclusion:
    \begin{equation}\label{hoa}
T\langle X\rangle^{\otimes m}\ltimes  \mathbb Q[S_m]\otimes T\langle X\rangle^{\otimes n}\ltimes  \mathbb Q[S_n]\to T\langle X\rangle^{\otimes m+n}\ltimes  \mathbb Q[S_{m+n}]
\end{equation}$$A\in T\langle X\rangle^{\otimes m},\ B\in T\langle X\rangle^{\otimes n},\ \sigma\in S_m,\ \tau\in S_n ;\quad A\sigma\otimes B\tau\mapsto A\otimes B\sigma\tau$$ and we have
\begin{equation}\label{ptid}
I_d(m) \otimes T\langle X\rangle^{\otimes n}\ltimes  \mathbb Q[S_n] +T\langle X\rangle^{\otimes m}\ltimes  \mathbb Q[S_m]\otimes  I_d(n) \subset I_d(m+n).
\end{equation} Denote for simplicity  $\mathcal T (X,n):=T\langle X\rangle^{\otimes n}\ltimes  \mathbb Q[S_n]$.
  \begin{definition}\label{Tid}
A sequence $\{J(n)\}$ of ideals $J(n)\subset \mathcal T (X,n)$  will be called a {\em $T$--ideal} if
$$
g^{\otimes n }\ltimes  1(J(n))\subset J(n),\ \forall g:T\langle X\rangle\to T\langle X\rangle$$
\begin{equation}\label{dtid} J(m)\otimes \mathcal T (X,n)
+\mathcal T (X,m) \otimes J(n)\subset J(m+n),\ \forall m,n.
\end{equation}  Clearly the intersection of $T$--ideals is still a $T$--ideal, so we define.

A   $T$--ideal $\{J(n)\}$  is {\em generated } by a subset $S\subset \bigcup_n  \mathcal T (X,n)$ if it is the minimal $T$--ideal containing $S$. \end{definition}We also say that the elements of each $\{J(n)\}$ are {\em deduced} from the elements $S$. We leave to the reader to understand how the previous Formulas translate into the {\em rules  of deduction} of the  elements in the  $T$--ideal $\{J(n)\} $ from the generating set $S$.
 \subsubsection{The $T$--ideal  of tensor identities}Clearly the relations for $d\times d$ matrices  $\{I_d(n)\}$  form a $T$--ideal. 
By the classical method of polarization and restitution  one can, studying relations or $T$--ideals,   restrict to multilinear  elements  $ \mathcal T_{mult} (k,n)$. That is:
\begin{proposition}\label{mug}
If $\{J_1(n)\}$ and $\{J_2(n)\}$ are two $T$--ideals having the same multilinear elements they coincide.

\end{proposition}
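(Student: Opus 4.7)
The plan is to run the classical polarization and restitution argument at the level of $T$-ideals. The only feature of Definition \ref{Tid} I will use is closure under the substitution endomorphisms $g^{\otimes n}\ltimes 1$ induced by maps $g:X\to T\langle X\rangle$; in characteristic $0$ this alone will force a $T$-ideal to be determined by its multilinear elements.

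First I would show that each $J_i(n)$ is closed under extracting multihomogeneous components in the variables of $X$. For scalars $t_1,\ldots,t_k\in\mathbb Q$ the assignment $g_{\mathbf t}:x_j\mapsto t_j x_j$ is a trace-algebra endomorphism of $T\langle X\rangle$, so $g_{\mathbf t}^{\otimes n}\ltimes 1$ preserves $J_i(n)$. Decomposing $f=\sum_{\mathbf d}f_{\mathbf d}$ into pieces of multidegree $\mathbf d=(d_1,\ldots,d_k)$ in $x_1,\ldots,x_k$, the image $g_{\mathbf t}^{\otimes n}\ltimes 1(f)=\sum_{\mathbf d}t_1^{d_1}\cdots t_k^{d_k}f_{\mathbf d}$ lies in $J_i(n)$ for every $\mathbf t$, and a Vandermonde argument recovers each $f_{\mathbf d}\in J_i(n)$ as a $\mathbb Q$-linear combination of such images.

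Next, given a multihomogeneous $f\in J_1(n)$ of multidegree $(d_1,\ldots,d_k)$, I would introduce fresh variables $x_{i,a}$ ($1\le i\le k$, $1\le a\le d_i$) and apply the substitution $x_i\mapsto x_{i,1}+\cdots+x_{i,d_i}$. The result lies in $J_1(n)$, and applying the extraction above to the enlarged variable set, its component $P(f)$ that is multilinear in all the fresh variables also lies in $J_1(n)$. By the standing assumption on multilinear elements, $P(f)\in J_2(n)$.

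Finally, the restitution $h:x_{i,a}\mapsto x_i$ is again a substitution endomorphism, hence $h^{\otimes n}\ltimes 1(P(f))\in J_2(n)$. A standard combinatorial identity identifies this image with $d_1!\cdots d_k!\,f$, and dividing by the nonzero factorials (using characteristic $0$) yields $f\in J_2(n)$. By symmetry $J_2(n)\subset J_1(n)$, completing the proof. The only point requiring genuine care is verifying that the multilinear extraction producing $P(f)$ and the restitution identity $h^{\otimes n}\ltimes 1(P(f))=d_1!\cdots d_k!\,f$ are indeed realized purely by the substitutions and rational linear combinations permitted by Definition \ref{Tid}; beyond that bookkeeping, the argument is entirely formal and, notably, does not rely on the product axiom \eqref{dtid}.
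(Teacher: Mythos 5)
Your argument is correct and is exactly the classical polarization--restitution method the paper invokes (without spelling out the details) immediately before stating Proposition~\ref{mug}: multihomogeneous extraction via $x_j\mapsto t_jx_j$ and Vandermonde, full polarization by $x_i\mapsto\sum_a x_{i,a}$, and restitution $x_{i,a}\mapsto x_i$ recovering $d_1!\cdots d_k!\,f$. Your side observation that only closure under the substitution endomorphisms $g^{\otimes n}\ltimes 1$ together with $\mathbb Q$-linearity is used, with the product axiom~\eqref{dtid} playing no role here, is accurate and a worthwhile clarification of what this proposition actually depends on.
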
  
 By definition, the space  $ \mathcal T_{mult} (k,n)$ of multilinear elements of degree $k$  in  $ \mathcal T (X,n)$ is the span of the elements depending linearly only upon the first $k$ variables $x_1,\ldots,x_k$. 
We should remark that this subspace can be identified to $\mathbb Q[S_{k+n}]$ by the Formula  \eqref{foft}, through the interpretation map  $T^{(n)}_k: \tau\mapsto 
T_{k,\tau}^{(n)}$, Definition \ref{inyx}.\medskip

As for the $T$--ideal $\{I_d(n)\}$ of tensor identities for $d\times d$ matrices, 
  we start from the  $d+2$ interpretations $F_{k,d}(x_1,\ldots,x_k)$ of the antisymmetrizer as  tensor identities  for $d\times d$ matrices given by Formula \eqref{ledr}.  Equivalently, using polarization which is one of the rules of deduction,  one could start with the 1--variable relations given by Formula \eqref{mira}. We claim
   \begin{theorem}\label{SFT}  $\{I_d(n)\}$  is generated, as $T$--ideal, from the $d+2$ interpretations $\mathfrak C_{k,d}(x)$ of the antisymmetrizer and $tr(1)=d$.
   
   In other words we may say that, every relation  for equivariant tensor valued polynomials  maps from $d\times d$ matrices to tensor products of  $d\times d$ matrices  can be {\em deduced}   from the $d+2$  identities of   Formula \eqref{mira} and $tr(1)=d$. 

\end{theorem}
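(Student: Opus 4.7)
The strategy is to reduce to multilinear identities via polarization and restitution, identify the multilinear part of $I_d(n)$ (via the interpretation isomorphism) with the kernel of the standard group algebra map $\pi_{n+k}: \mathbb{Q}[S_{n+k}] \to \mathrm{End}(V^{\otimes n+k})$, and then show that, starting from $\mathfrak{C}_{0,d}(x) \in J(d+1)$, the T-ideal operations are rich enough to realize this kernel in every multilinear component.

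By Proposition~\ref{mug} it is enough to compare multilinear parts. Using the pairing \eqref{muls} together with non-degeneracy of the trace form, an element $H = \sum_\tau a_\tau T^{(n)}_{k,\tau}(x_1,\ldots,x_k) \in \mathcal{T}_{mult}(k,n)$ lies in $I_d(n)$ if and only if $\sum_\tau a_\tau \tau$ lies in $\ker \pi_{n+k}$. By Theorem~\ref{sst1}, $\ker \pi_{n+k}$ is the two-sided ideal of $\mathbb{Q}[S_{n+k}]$ generated by the antisymmetrizer $A_{d+1} = \sum_{\sigma \in S_{d+1}} \epsilon_\sigma \sigma$ on any $(d+1)$-subset. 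So the problem reduces to: for every $\alpha, \beta \in \mathbb{Q}[S_{n+k}]$ and every $(d+1)$-subset $S \subseteq \{1,\ldots,n+k\}$, show that $T^{(n)}_k(\alpha\, A_S\, \beta) \in \mathcal{T}_{mult}(k,n)$ is a T-ideal consequence of $\mathfrak{C}_{0,d}(x)$ (hence of any of the $\mathfrak{C}_{k,d}(x)$) together with $tr(1)=d$.

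To achieve this, I would first apply the tensor product rule \eqref{dtid}: $\mathfrak{C}_{0,d}(x) \otimes 1^{\otimes m} \in J(d+1+m)$ for every $m \geq 0$, which under interpretation equals $A_{\{1,\ldots,d+1\}}$ sitting inside $\mathbb{Q}[S_{d+1+m}] \subset \mathcal{T}(X,d+1+m)$, of degree $0$ in the $x$-variables. Next, since $J$ is an ideal in each ambient algebra, left and right multiplication by permutations $\sigma \in S_{d+1+m}$ conjugates $A_{d+1}$ onto antisymmetrizers on any $(d+1)$-subset and produces all combinations $\alpha A_S \beta$ at $x$-degree zero. To raise the $x$-degree to $k$, I would multiply by tensor monomials of the form $1^{\otimes i-1} \otimes M \otimes 1^{\otimes n-i}$ on either side and apply substitutions $x_j \mapsto M'$ for monomials $M'$: by Theorem~\ref{dect} parts~(1), (2), (3), these operations translate under $T^{(n)}_k$ into left and right multiplication by exactly the mixed cycles $(i,u)$ that bind new $x$-indices to existing $y$- or $x$-indices. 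Iterating, every element $\alpha A_S \beta$ of $\mathbb{Q}[S_{n+k}]$ arises as $T^{(n)}_k$ of an element of $J(n)$.

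The principal obstacle is to make this dictionary between T-ideal operations in the tower $\{\mathcal{T}(X,n)\}_n$ and two-sided ideal operations in the group algebras $\{\mathbb{Q}[S_m]\}_m$ both precise and exhaustive. One must verify that the allowed operations (tensor product, left and right multiplication by tensor monomials, substitution in the $x$-variables, and the trace normalization $tr(1)=d$, which reduces the formal scalars $tr(1^j)$ appearing in the factor $t_{\tau_2}(x)$ of \eqref{foft} to the numerical scalar $d^j$) are enough on the $\mathbb{Q}[S_{n+k}]$ side to produce every pair $(\alpha,\beta)$ of left and right multipliers. The argument is essentially a careful bookkeeping through the cycle structure of Proposition~\ref{spc} and the transformation formulas of Theorem~\ref{dect}, but it is the technical heart of the proof.
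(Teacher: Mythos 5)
The central gap is that you attempt to generate the whole $T$--ideal from the single element $\mathfrak C_{0,d}(x)=\sum_{\sigma\in S_{d+1}}\epsilon_\sigma\sigma$, which lives in $\mathcal T(X,d+1)$. Under Definition~\ref{Tid}, the allowed operations are substitutions $g^{\otimes n}\ltimes 1$ (which preserve the tensor index $n$), tensor products via Formula~\eqref{hoa}/\eqref{dtid} (which can only \emph{increase} $n$), and left/right multiplication inside each fixed $\mathcal T(X,n)$ (which again preserves $n$). There is no operation in Definition~\ref{Tid} that lowers the number of tensor factors. Consequently the $T$--ideal generated by $\mathfrak C_{0,d}(x)$ under these rules has $J(n)=0$ for every $n\leq d$, whereas $I_d(n)$ is nonzero there: for instance $\mathfrak C_{1,d}(x)\in I_d(d)$, and for $n=1$ the Cayley--Hamilton identity $\mathfrak C_{d,d}(x)\in I_d(1)$. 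Your step ``$\mathfrak C_{0,d}(x)\otimes 1^{\otimes m}\in J(d+1+m)$ for every $m\geq 0$'' and the subsequent multiplications by tensor monomials (Theorem~\ref{dect}) all keep $n\geq d+1$, so they cannot reach the lower-$n$ relations. This is precisely why Theorem~\ref{SFT} requires all $d+2$ generators $\mathfrak C_{k,d}(x)$, one in each $\mathcal T(X,d+1-k)$ for $k=0,\dots,d+1$; the reduction to $\mathfrak C_{0,d}(x)$ alone is the content of Theorem~\ref{SFT1}, and it needs the \emph{extra} formal partial trace $\mathtt t$ of Definition~\ref{Tid1} and Proposition~\ref{passs1}, together with the recursion of Formula~\eqref{eFine}, to descend in $n$.

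Aside from that, the first half of your plan (reduce to multilinear via Proposition~\ref{mug}, use the nondegenerate pairing and Theorem~\ref{sst1} to identify $\mathcal T_{mult}(k,n)\cap I_d(n)$ with the interpretation of the two-sided ideal of $\mathbb Q[S_{n+k}]$ generated by the antisymmetrizer, then translate ideal operations through Theorem~\ref{dect}) is the same framing the paper uses, and your ``bookkeeping through the cycle structure'' is indeed what the paper carries out. The difference in direction is worth noting: you build up from the generator by applying operations, while the paper argues top-down, taking a general $\sigma\circ\tau A_{d+1}\tau^{-1}$, passing to the antisymmetrizer $\mathfrak A_{d+1}$ on the index set $C=\tau(\{1,\dots,d+1\})$, and then applying Proposition~\ref{spc} to split $\sigma$ with respect to $C\cup D$ (and subsequently with respect to $A\cup B$) to peel off factors handled by Formulas~\eqref{ilpd}, \eqref{prrp}, \eqref{prrp0}, \eqref{prrp2} until one reaches a basic relation $F_{\ell,d}$ for some $\ell$ determined only at the end of the algorithm. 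Both directions would work once the generating set is corrected; the detailed cycle-splitting is, as you say, the technical heart, and your proposal does not yet carry it out.
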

\begin{proof} From Proposition \ref{mug} it is enough to restrict to multilinear relations.  The proof is then presented as an algorithm.
From Remark \ref{Forid} we know that all the formulas developed in \S \ref{formule} for multilinear equivariant maps hold also at the symbolic level.

 By Theorem \eqref{SFT} one sees that the space $ \mathcal T_{mult} (k,n)\cap I_d(n)$ of multilinear relations   is 0 unless 
   $m:=k+ n\geq d+1$.
   
 In this case it is  the  image, under the mapping $\psi_k: \tau\mapsto 
T_{k,\tau}^{(n)},\tau\in S_{k+n}$ of the two sided ideal of $\mathbb Q[S_{k+n}]$ generated by the antisymmetrizer $  A_{d+1}$.  Thus it  is  formed by   linear combinations of the $k$--interpretation (Definition \ref{inyx}) in terms of tensor valued maps of the linear generators, which we write as  $\sigma\circ \tau\circ A_{d+1}\circ \tau^{-1},\ \sigma, \tau\in S_{k+n}$, of this ideal. 

 For  $m=d+1$ and each $k$ with $0\leq k\leq d+1$, setting $ n=d+1-k$ we just have, up to scale,  only the $k^{th}$ of the  $d+2$  basic relations homogeneous of degree $k$. In other words  $I_d(n)\cap \mathcal T_{mult} (k,n)=\mathbb Q\cdot F_{k,d}(x_1,\ldots,x_k)$.
 
   For fixed $m>d+1,k$ we then decompose  $\{1,2,\ldots,m\}=A\cup B$  with $B$ the last $k$ indices (the $x$  indices) and $A$ the first $n=m-k$ indices (the $y$  indices).
 
 Finally we see,   by   Lemma \ref{dueco}, that  the conjugation action by elements of $S_A\times S_B$  commutes with the interpretation. Where $S_A$ permutes the tensor factors while $S_B$  permutes the $x$ variables.\smallskip

We need thus to understand, for $m>d+1$  and $ \sigma\circ \tau\circ A_{d+1}\circ \tau^{-1}\in \mathbb Q[S_m]$, the symbolic elements  $T_k^{(n)}(\sigma\circ \tau\circ A_{d+1}\circ \tau^{-1}),\ k=0,1,\ldots,m$, given by Formula \eqref{interp},   interpretations of   $\sigma\circ \tau\circ A_{d+1}\circ \tau^{-1}$, and prove that they are deduced from the basic relations. 


Given any set $I$ of $d+1$ indices out of  the set $\{1,2,\ldots,m\}$  we denote by  $A_{d+1}(I)=\sum_{\sigma\in S_I}\epsilon_\sigma\sigma\in \mathbb Q[S_m] $  the antisymmetrizer in those indices.  
 
  The element $\tau   \circ A_{d+1}\circ \tau^{-1}$  is, up to sign, the antisymmetrizer on the $d+1$  elements  of $C:=\tau(1,2,\ldots,d+1)$. Denote by  $\mathfrak A_{d+1}:=A_{d+1}(C)$.     
 
We need to understand $T_k^{(n)}(\sigma\mathfrak A_{d+1})$.

     Decompose   $\{1,2,\ldots,m\}= C\cup D  $   and split  $ \sigma=\sigma_1\sigma_2\sigma_3$ by applying Proposition \ref{spc}   to     this decomposition.
Since $\sigma_3$ is a permutation of the indices $C$  we have  $\sigma_3\mathfrak A_{d+1}=\pm \mathfrak A_{d+1}$  so we need only analyze $\sigma_1\sigma_2\mathfrak A_{d+1}.$ 

Now,  since the indices of $\sigma_2$ are disjoint from those of $\sigma_1\mathfrak A_{d+1} $, the interpretation of $\sigma_1\sigma_2\mathfrak A_{d+1} $ is, up to permuting the tensor  factors, the tensor product of the two interpretations of $\sigma_2$ and of $\sigma_1\mathfrak A_{d+1} $, Formula \eqref{ilpd}. 

Therefore the interpretation of  $\sigma_1\sigma_2\mathfrak A_{d+1} $ is  deduced from that of $ \sigma_1\mathfrak A_{d+1} $ and we may assume we are in this case from start, denoting $\sigma_1=\phi$.

We  are   left to understand   the interpretation of $\phi\mathfrak A_{d+1} $ where $\phi=\prod_i\mathtt c^i$ is a product of its cycles $\mathtt c^i$ each containing exactly one element of $C$.  \smallskip

Use the same notations $\{1,2,\ldots,m\}= C\cup D =A\cup B,\ k=|B|,\ n=|A| $ for this relation  $T_k^{(n)}(\phi\mathfrak A_{d+1})$ (and $m=k+n$). 
\smallskip

Assume first   $\phi=1$  and let $h:=|B\cap C|,\ p:=|A\cap C|,\ h+p=d+1$. 

Then consider a permutation $\gamma=\gamma_1\circ \gamma_2\in S_A\times S_B$ such that $$\gamma_1(A\cap C)=\{1,2,\ldots,n\},\  \gamma_2(B\cap C)=\{  n+1,n+2,\ldots,n+h\},$$ 
and let $I:=\{1,2,\ldots,n\}\cup \{  n+1,n+2,\ldots,n+h\}$. 

Then $A_{d+1}(C)=\gamma_1^{-1}A_{d+1}(I)\gamma_1,  $  so, by   Lemma \ref{dueco}, we have  that
$$T_k^{(n)}(   A_{d+1}(C))(x_1,\ldots,x_k)= \gamma_1^{-1}T_k^{(n)}(   A_{d+1}(I))(x_{\gamma_2(1)},\ldots,x_{\gamma_2(k)}) \gamma_1$$ is deduced from $T_k^{(n)}(   A_{d+1}(I))(x_1,\ldots,x_k)$. Again by Theorem \ref{dect}  we have
 \begin{equation}\label{fond}
T_k^{(n)}(   A_{d+1}(I))(x_1,\ldots,x_k)=(-1)^h\prod_{j=h+1}^k tr(x_j)  F_{k,d}(x_1,\ldots,x_h)\otimes 1^{ n-p} 
\end{equation} is deduced from the basic relation $F_{k,d}(x_1,\ldots,x_h)=T_h^{(p)}(   A_{d+1}(I))(x_1,\ldots,x_h)$.\smallskip

For a general $\phi $ let us  denote by $E$ the set of indices appearing (that is moved by)   in $\phi$  and decompose  $E=E_1\cup E_2$; with $E_1=E\cap A$ the set of indices in $E $   of type $y$ and $E_2=E\cap B$  formed by indices of type $x$.  

Next split $\phi=\phi_1\phi_2\phi_3$,  as in  Proposition \ref{spc}, with respect to this decomposition of $E$. Recall that, by construction,      $\phi=\sigma_1=\prod_j\mathtt c^{(j)}$ is a product of its cycles $\mathtt c^{(j)}$ each containing exactly one element of $C$. 

Thus we split each cycle  $\mathtt c^{(j)}= \mathtt c^{(j)}_1\mathtt c^{(j)}_2\mathtt c^{(j)}_3$  and   for each $i=1,2,3$  we have that $\phi_i=\prod_j \mathtt c^{(j)}_i$ (cf. Formula \eqref{sppr}).  

Recall that, if    $\mathtt c^{(j)}$ is formed only of elements of $A$  we have    $\mathtt c^{(j)}= \mathtt c^{(j)}_3$; let us call this set of indices $S_3$. 

If  $\mathtt c^{(j)}$  is formed entirely of elements of $B$ then   $\mathtt c^{(j)}= \mathtt c^{(j)}_2$;  let us call this set of indices $S_2$.

Otherwise   the splitting of the cycle, Formula \eqref{sppli},  is 
$\mathtt c^{(j)}=\mathtt c^{(j)}_1 \mathtt c^{(j)}_3$. 
All indices of $A$ appearing in $\mathtt c^{(j)}$  form the cycle  $ \mathtt c^{(j)}_3$  while  each of these indices appears  in one and only one of the cycles of $\mathtt c^{(j)}_1$;  let us call this set of indices $S_1$.\smallskip

     Since  $\phi_3= \prod_j \mathtt c^{(j)}_3            $  is a permutation of indices of type $y$  by Theorem \ref{dect} 3.  Formula \eqref{prrp2} we have, setting $$\overline C:=\phi_3(C),\     \overline D:=\phi_3(D),\              \overline{\mathfrak  A}_{d+1} =A _{d+1}(\phi_3(C))=A _{d+1}(\overline C )$$ that   $$T_k^{(n)}(\phi_1\phi_2\phi_3\mathfrak  A_{d+1} )= T_k^{(n)}(\phi_1\phi_2\phi_3\mathfrak  A_{d+1}\phi_3^{-1}\phi_3 )=   \phi_3^{-1}T_k^{(n)}(\phi_1\phi_2\overline{\mathfrak  A}_{d+1}).$$  
We are thus reduced to study $$T_k^{(n)}(\phi_1\phi_2\overline{\mathfrak  A}_{d+1}) =T_k^{(n)}(\prod_{j\in S_1}\mathtt c^{(j)}_1\prod_{j\in S_2}\mathtt c^{(j)} \overline{\mathfrak  A}_{d+1})  .$$   
Since $\overline C:=\phi_3(C)$ and $\phi_3\in S_A$  we have  $\overline C\cap B=C\cap B$ so    $B\cap D=B\cap \overline D$ is disjoint from  $\overline C$. 

By assumption each  cycle $\mathtt c^{(j)}$  contains a unique element $h_j$ of $C$.   If $j\in S_2$  then $h_j\in B,$ and $  \mathtt c^{(j)}=(h_j,u_j)$ with $u_j$  a string of elements of $\overline D\cap B$. 

Thus, by Formula \eqref{prrp0},  we have that $T_k^{(n)}(\prod_{j\in S_1}\mathtt c^{(j)}_1\prod_{j\in S_2}\mathtt c^{(j)} \overline{\mathfrak  A}_{d+1})   $       is obtained from     $T_k^{(n)}(\prod_{j\in S_1}\mathtt c^{(j)}_1 \overline{\mathfrak  A}_{d+1})   $ by replacing  each variable   $x_{h_j},\ j\in S_2$   with the monomial $M_jx_{h_j}$ with $M_j$ associated to the string $u_j$. This is one of the deduction rules.

Up to permuting the variables, and renaming the values of $n,k$, we are finally reduced to  analyze  $T_k^{(n)}(\prod_{j\in S_1}\mathtt c^{(j)}_1 \overline{\mathfrak  A}_{d+1})   $. 

We have to distinguish two cases.  The first for the indices $S_1^B$  such that $h_j\in B\cap C=B\cap \overline C$  and the second for  the indices   $S_1^A$  such that $h_j\in A\cap C$.  

 If  $h_j\in A$  all the cycles decomposing  $\mathtt c^{(j)}_1$  are of the form $(a,v_a),\ a\in A$ and $v_a$ a string  in $B\cap D$.  Therefore to these elements we may apply either Formula  \eqref{prrp} if $a\in \overline C$  or   Formula  \eqref{ilpd} if $a\notin \overline C$. 
 
 We are finally reduced to  analyze  $T_k^{(n)}(\prod_{j\in S_1^B}\mathtt c^{(j)}_1 \overline{\mathfrak  A}_{d+1})   $.  

Now for $j\in S_1^B$  we first remark that, since $h_j\in B$ is the only element in $\mathtt c^{(j)} $ belonging to $C$, we have that the elements $a\in A$ appearing in $\mathtt c^{(j)} $ are also    in $D$.  Hence      $\phi_3(C)= \prod_{j\notin S_1^B} \mathtt c^{(j)}_3   (C)=\overline C         $. Thus the  elements $a\in A$ appearing in $\mathtt c^{(j)},  j\in S_1^B $ are also not in $\overline C$.  

Thus  for $j\in S_1^B$ we have that $\mathtt c^{(j)}_1 $  is again a product of  cycles $(a,v_a),$ $ a\in A,\ a\notin \overline C$ and $v_a$ a string  in $B\cap D$. These  cycles  are treated as before, and finally a cycle  $(a,u_a,h_j, v_a),\ a\in A,\ a\notin \overline C$ and $u_a,v_a$ two strings  in $B\cap D$. These cycles  correspond to some subset $\bar A$ of indices of  $A$  and we will write $h_a:=h_j$ for $a\in\bar A$. 

Setting $\rho:=\prod_{a\in\bar A} (a,u_a,h_j, v_a)$ we are  reduced to analyze $T_k^{(n)}(\rho \overline{\mathfrak  A}_{d+1})   $.

Split $$ (a,u_a,h_a,v_a) = (a,v_a)(h_a,u_a)(a,h_a)=(h_a,u_a) (a,v_a)(a,h_a).$$
Let $\gamma =\prod_a (a,h_a)=\gamma ^{-1}$ so that  $\rho \gamma =\prod_a (h_a,u_a)(a,v_a) $ and write   $$\rho \overline{\mathfrak  A}_{d+1}=\rho \gamma  (\gamma  \overline{\mathfrak  A}_{d+1} \gamma ^{-1})\gamma .$$
Then $\gamma  \overline{\mathfrak  A}_{d+1} \gamma ^{-1}= A_{d+1}(\gamma (\overline C))$ is also an antisymmetrizer on $d+1$ indices, let us denote it by  $\widetilde{\mathfrak  A}_{d+1}$. Only now the $x$ indices   $h_a$ corresponding to the $a\in\bar A$ have been replaced by the $y$  indices  $a\in\bar A$ and  $\rho \overline{\mathfrak  A}_{d+1}$ has  been replaced by $$\rho \gamma  \widetilde{\mathfrak  A}_{d+1} \gamma =\prod_j(h_a,u_a) \cdot \prod_a (a,v_a)  \widetilde{\mathfrak  A}_{d+1}\prod_a (a,h_a)     .$$   

 The   indices of  $\prod_j(h_a,u_a) $ are all $x$  indices, the indices $u_a$ are  disjoint from the indices in $\prod_a (a,v_j)\widetilde{\mathfrak  A}_{d+1}\gamma  $ therefore the   interpretation of $\rho \gamma  \widetilde{\mathfrak  A}_{d+1} \gamma  $  is obtained by Formula \eqref{prrp0},  from  the   interpretation of $\prod_a (a,v_a)  \widetilde{\mathfrak  A}_{d+1}\prod_a (a,h_a) $  by substituting each variable  $x_{h_a}$  with the monomial $Mx_{h_a}$, with $M$ associated to $ u_a $. One of the rules of deduction. 

We are  thus left  with $\prod_a(a,v_a)  \widetilde{\mathfrak  A}_{d+1}\prod_a(a,h_a) $, where $a\in   \gamma (\overline C)\cap A$  and  $v_a,h_a\notin \gamma (\overline C)$ and $v_a,h_a\in B$. Thus $\prod_a (a,v_a) $ and $\prod_a(a,h_a) $   are formed by  a product of cycles      for which we can apply 
  the two Formulas \eqref{prrp}.
  
  We  conclude that the interpretation of $\prod_a(a,v_a)  \widetilde{\mathfrak  A}_{d+1}\prod_a(a,h_a) $ is obtained from that of $\widetilde{\mathfrak  A}_{d+1}= A_{d+1}(\gamma (\overline C))  $,  by multiplying from the right and from the left by tensor products of monomials.

 Finally   the interpretation  of $\widetilde{\mathfrak  A}_{d+1}= A_{d+1}(\gamma (\overline C))= A_{d+1}(\gamma \phi_3(  C))  $ is treated by the discussion leading to Formula \eqref{fond}.

 \end{proof} Notice an interesting feature of this algorithm.  The symbolic element   $T_k^{(n)}(\sigma\circ \tau\circ A_{d+1}\circ \tau^{-1}) $,    interpretation  of   $\sigma\circ \tau\circ A_{d+1}\circ \tau^{-1}$ is  deduced from just one of the basic relations $ F_{\ell,d}(x_1,\ldots,x_h)$,. On the other hand we discover the value of $\ell$ only at the end of the algorithm.

\subsection{The final theorem}
\subsubsection{Symbolic operations on  equivariant maps}
Some operations on  equivariant maps  from matrices to tensors  can be  interpreted as operations on permutations.\smallskip

Consider the following basic operations on elements of $M_d^{\otimes n}$.
\begin{align}\label{ope}
&\sigma\in S_n,\ \sigma\cdot X_1\otimes \ldots\otimes X_n =  X_{ \sigma^{-1}(1)} \otimes \ldots\otimes X_{ \sigma^{-1}(n)}\in M_d^{\otimes n} ,\\& m: X_1\otimes X_2\otimes \ldots \otimes X_{n-1}\otimes X_n\mapsto  X_1  \otimes X_2\otimes \ldots \otimes    X_nX_{n-1}\in M_d^{\otimes n-1}\\ &\mathtt t: X_1\otimes X_2\otimes \ldots \otimes X_{n-1}\otimes X_n \mapsto tr( X_n) X_1 \otimes \ldots \otimes X_{n-1 }\in M_d^{\otimes n-1}.
\end{align} One obtains many similar operations by combining these basic ones. 
\begin{lemma}\label{connt}
\begin{equation}\label{tui}
\mathtt t((n,i) \circ  X_1\otimes X_2\otimes \ldots \otimes   X_n)=X_1\otimes \ldots \otimes X_nX_i \otimes\ldots \otimes   X_{n-1 }
\end{equation}
\end{lemma}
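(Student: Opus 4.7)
The plan is to reduce the claim to a direct matrix-unit calculation. The partial trace $\mathtt{t}$ is only defined on elementary tensors, so the first task is to decompose the element $(n,i)\circ X_1\otimes\ldots\otimes X_n$ as a sum of elementary tensors in $M_d^{\otimes n}$. For this, I write the transposition $(n,i)$, viewed through Formula \eqref{twa1} as an operator on $V^{\otimes n}$, in terms of matrix units: if $\{e_{a,b}\}$ is the standard basis of $M_d$, then
\[
(n,i) \;=\; \sum_{a,b} 1^{\otimes i-1}\otimes e_{a,b}\otimes 1^{\otimes n-i-1}\otimes e_{b,a}
\]
(with $e_{a,b}$ in slot $i$ and $e_{b,a}$ in slot $n$), since this is the standard identity $P=\sum e_{a,b}\otimes e_{b,a}$ for the flip operator, inserted in positions $i$ and $n$.

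Next I compose tensor-factor-wise with $X_1\otimes\ldots\otimes X_n$ in the algebra $M_d^{\otimes n}$, which gives
\[
(n,i)\circ X_1\otimes\ldots\otimes X_n \;=\; \sum_{a,b} X_1\otimes\ldots\otimes X_{i-1}\otimes e_{a,b}X_i\otimes X_{i+1}\otimes\ldots\otimes X_{n-1}\otimes e_{b,a}X_n.
\]
Now applying $\mathtt{t}$, which multiplies each summand by the trace of its last factor and drops that factor, I obtain
\[
\sum_{a,b} \operatorname{tr}(e_{b,a}X_n)\,X_1\otimes\ldots\otimes X_{i-1}\otimes e_{a,b}X_i\otimes X_{i+1}\otimes\ldots\otimes X_{n-1}.
\]
Since $\operatorname{tr}(e_{b,a}X_n)=(X_n)_{a,b}$ and $\sum_{a,b}(X_n)_{a,b}\,e_{a,b}=X_n$, the sum collapses to $X_1\otimes\ldots\otimes X_{i-1}\otimes X_nX_i\otimes X_{i+1}\otimes\ldots\otimes X_{n-1}$, which is exactly Formula \eqref{tui}.

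There is no real obstacle; the only subtlety is keeping the two roles of the symmetric group straight, namely $(n,i)$ as a permutation operator on $V^{\otimes n}$ (used when composing with $X_1\otimes\ldots\otimes X_n$) versus $(n,i)$ acting by permuting tensor factors (which is how the right-hand side would look if we started from Remark \ref{gents}). The matrix-unit expansion of the flip is exactly the bridge between the two pictures, and once it is in place the computation is mechanical.
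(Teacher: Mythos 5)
Your proof is correct, and it takes a genuinely different (though equally elementary) route from the paper's. The paper works with the rank-one decomposition of the operators: it writes each $X_j = u_j\otimes\phi_j$ as a decomposable endomorphism in $V\otimes V^*$, invokes Lemma \ref{coss} (Formula \eqref{formuu0}) to see how the permutation $(n,i)$ rearranges the $\phi$-factors, and then reads off the partial trace from the resulting rank-one tensor. You instead leave the $X_j$ general and expand the \emph{permutation} rather than the operators, using the matrix-unit identity $P=\sum_{a,b}e_{a,b}\otimes e_{b,a}$ for the flip inserted in slots $i$ and $n$; after factorwise multiplication and the trace contraction $\operatorname{tr}(e_{b,a}X_n)=(X_n)_{a,b}$, the sum recollapses via $\sum_{a,b}(X_n)_{a,b}e_{a,b}=X_n$. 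Your argument is self-contained and avoids invoking Lemma \ref{coss}, which makes it a touch more direct as an isolated computation; the paper's version has the advantage of staying within the $V\otimes V^*$ formalism it has already set up and reused throughout (Formulas \eqref{formuu0}, \eqref{formuu}), so it integrates more smoothly with the surrounding text. Both are sound, and the closing remark you make about distinguishing the two roles of the symmetric group is exactly the right thing to keep in view.
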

\begin{proof} We may assume that $X_j:=u_j\otimes \phi_j,\ =1,\ldots,n$  be $n$  decomposable endomorphisms, 
$$\mathtt t((n,i)   \circ   X_1\otimes X_2\otimes \ldots \otimes   X_n)\stackrel{\eqref{formuu0}}= \mathtt   t(u_{1}\otimes \phi_1\otimes u_{ 2  }\otimes \phi_2\otimes \ldots \otimes u_{ n  }\otimes \phi_i \otimes\ldots \otimes u_{ i }\otimes \phi_n)
  $$ 
$$= u_{1}\otimes \phi_1\otimes  u_{ 2  }\otimes \phi_2\otimes \ldots \otimes  \langle \phi_n\mid u_{i} \rangle  u_{ n  }\otimes \phi_i\otimes \ldots \otimes u_{ n-1 }\otimes \phi_ { n-1 } 
$$
$$ =X_1\otimes \ldots \otimes  X_nX_i\otimes\ldots \otimes   X_ { n-1 } $$
\end{proof}
In  particular  $m=\mathtt t\circ (n,n-1)$.
Then  remark that,   if $\sigma\in S_n$ fixes $n$,  we have $\sigma \circ \mathtt t= \mathtt t\circ \sigma$.
So consider  $S_{n-1}\subset S_n$  the permutations fixing  $n$. 

We have the coset decomposition 
$$S_n= S_{n-1}\cup\bigcup_{i=1}^{n-1} S_{n-1}(n,i).$$
From the previous Lemma we deduce, for $n\geq 2$:
\begin{proposition}\label{passs}
If $\sigma\in S_{n-1} $ then  $\mathtt t\circ \sigma=\sigma\circ\mathtt  t$    and \begin{equation}\label{puss}
\mathtt t(\sigma  \circ  X_1\otimes X_2\otimes \ldots \otimes   X_n )=\sigma \circ   tr(   X_n) X_1\otimes \ldots \otimes   X_{ n-1 }, 
\end{equation} in particular $\mathtt t(\sigma)= \sigma\cdot  tr(1)$.\smallskip

If $\sigma= \tau (n,i),\ \tau\in S_{n-1} $ then \begin{equation}\label{puss1}
\mathtt    t(\sigma \circ   X_1\otimes X_2\otimes \ldots \otimes   X_n) =\tau  \circ     X_1\otimes \ldots \otimes  X_iX_n\otimes \ldots   \otimes X_{ n-1 }  .
\end{equation}
 In particular $\mathtt t(\sigma)= \tau.$\end{proposition}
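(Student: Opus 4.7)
The central observation is that the partial trace $\mathtt t$ only affects the last tensor factor, so any operator acting trivially on that factor commutes with $\mathtt t$. My plan is first to establish this commutation for every $\sigma\in S_{n-1}$ by a direct factorization, then to reduce the general $\sigma\in S_n$ to the already--established case via the coset decomposition $S_n=S_{n-1}\cup\bigcup_{i=1}^{n-1}S_{n-1}(n,i)$, invoking Lemma \ref{connt} for the extra transposition $(n,i)$.

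For $\sigma\in S_{n-1}\subset S_n$ fixing $n$, the operator $\sigma$ on $V^{\otimes n}\simeq V^{\otimes n-1}\otimes V$ factors as $\sigma'\otimes 1_V$, where $\sigma'$ denotes $\sigma$ acting on $V^{\otimes n-1}$. Decomposing correspondingly $X_1\otimes\dots\otimes X_n=(X_1\otimes\dots\otimes X_{n-1})\otimes X_n$, one gets
$$\sigma\circ X_1\otimes\dots\otimes X_n=(\sigma'\circ X_1\otimes\dots\otimes X_{n-1})\otimes X_n,$$
and applying $\mathtt t=1\otimes tr$ to the last factor yields $tr(X_n)\cdot(\sigma'\circ X_1\otimes\dots\otimes X_{n-1})$, which is exactly \eqref{puss}. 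The same factorization applied to an arbitrary $Y\in M_d^{\otimes n}$, decomposed as a sum of elementary tensors in $End(V^{\otimes n-1})\otimes End(V)$, proves $\mathtt t\circ\sigma=\sigma\circ\mathtt t$ as linear maps $M_d^{\otimes n}\to M_d^{\otimes n-1}$. Specializing all $X_j$ to $1$ then recovers $\mathtt t(\sigma)=\sigma\cdot tr(1)$.

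For the second part, write $\sigma=\tau(n,i)$ with $\tau\in S_{n-1}$, and compute
$$\mathtt t(\sigma\circ X_1\otimes\dots\otimes X_n)=\mathtt t\bigl(\tau\circ((n,i)\circ X_1\otimes\dots\otimes X_n)\bigr)=\tau\circ\mathtt t\bigl((n,i)\circ X_1\otimes\dots\otimes X_n\bigr),$$
where the second equality uses the commutation $\mathtt t\circ\tau=\tau\circ\mathtt t$ established in the first step. Now invoke Lemma \ref{connt} to evaluate the inner partial trace explicitly on the tensor monomial, obtaining \eqref{puss1}. Specializing all $X_j$ to $1$ collapses the product at position $i$ to $1$, whence $\mathtt t(\sigma)=\tau$.

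There is no serious obstacle here: all the substantive computation is already packaged in Lemma \ref{connt}, and the remaining work is the conceptual observation that permutations fixing the index $n$ factor through $V^{\otimes n-1}\otimes V$ in the obvious way and so trivially commute with the partial trace on the final factor. The coset decomposition of $S_n$ into $S_{n-1}$ and its translates by $(n,i)$ then covers every element of $S_n$, and the two regimes are distinguished precisely by the two formulas of the proposition.
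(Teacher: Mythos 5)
Your proof is correct and is essentially what the paper does: the paper simply states that Proposition \ref{passs} is deduced from Lemma \ref{connt}, and your write-up unpacks exactly that deduction --- the factorization $\sigma=\sigma'\otimes 1_V$ for $\sigma\in S_{n-1}$ to get the commutation $\mathtt t\circ\sigma=\sigma\circ\mathtt t$ and \eqref{puss}, then the coset decomposition together with Lemma \ref{connt} for the case $\sigma=\tau(n,i)$.

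One small but real issue worth flagging, though. You assert that invoking Lemma \ref{connt} yields \eqref{puss1} as printed, but it does not quite: Lemma \ref{connt} (and a direct index computation) give, at position $i$, the product $X_nX_i$, whereas \eqref{puss1} as displayed in the paper has $X_iX_n$. Carrying your argument out honestly,
$$\mathtt t\bigl(\tau(n,i)\circ X_1\otimes\cdots\otimes X_n\bigr)=\tau\circ\mathtt t\bigl((n,i)\circ X_1\otimes\cdots\otimes X_n\bigr)\stackrel{\ref{connt}}{=}\tau\circ X_1\otimes\cdots\otimes X_nX_i\otimes\cdots\otimes X_{n-1},$$
so the factor should be $X_nX_i$. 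The formula \eqref{puss1} in the statement therefore appears to contain a transposition typo; this does not affect the consequences drawn later in the paper (there the two factors are powers of the same $x$ and hence commute), but a careful proof should state the correct order rather than silently match the printed one.
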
\begin{proposition}\label{passs1}
 Using Formulas \eqref{puss} and \eqref{puss1}, and  $\mathtt t(1):=tr(1)$  one can define  $\mathtt t$  as a {\em formal operation}  $\mathtt t:T\langle X\rangle^{\otimes n}\ltimes \mathbb Q[S_n]\to T\langle X\rangle^{\otimes n-1}\ltimes \mathbb Q[S_{n-1}]$ extending the formal trace  $tr:T\langle X\rangle\to \mathbb Q[tr(M)]$.
 
 This is a {\em partial trace} which is linear with respect to multiplication by the scalars $ \mathbb Q[tr(M)]$ and preserves the degree in $X$.

 \end{proposition}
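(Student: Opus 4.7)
The plan is to \emph{define} $\mathtt t$ directly on a $\mathbb Q[tr(M)]$-module basis and then verify the three asserted properties. By Remark \ref{basste} and the semidirect product structure of Definition \ref{pua}, the algebra $T\langle X\rangle^{\otimes n}\ltimes\mathbb Q[S_n]$ is free as a $\mathbb Q[tr(M)]$-module with basis the elements $\sigma\circ M_1\otimes\cdots\otimes M_n$, where $\sigma$ ranges over $S_n$ and each $M_i$ over monomials in $F\langle X\rangle$. Using the coset decomposition $S_n=S_{n-1}\sqcup\bigsqcup_{i=1}^{n-1}S_{n-1}(n,i)$ recalled just above Proposition \ref{passs}, every basis vector has a unique normal form either as $\sigma\circ(M_1\otimes\cdots\otimes M_n)$ with $\sigma\in S_{n-1}$, or as $\tau(n,i)\circ(M_1\otimes\cdots\otimes M_n)$ with $\tau\in S_{n-1}$ and $1\le i\le n-1$.

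First I set, transcribing \eqref{puss}--\eqref{puss1} with generic matrix variables replaced by arbitrary monomials,
\[
\mathtt t\bigl(\sigma\circ M_1\otimes\cdots\otimes M_n\bigr):=\sigma\circ tr(M_n)\cdot M_1\otimes\cdots\otimes M_{n-1}\qquad (\sigma\in S_{n-1}),
\]
\[
\mathtt t\bigl(\tau(n,i)\circ M_1\otimes\cdots\otimes M_n\bigr):=\tau\circ M_1\otimes\cdots\otimes M_iM_n\otimes\cdots\otimes M_{n-1}\qquad (\tau\in S_{n-1}),
\]
together with $\mathtt t(1):=tr(1)$ when $n=1$, and I extend $\mathbb Q[tr(M)]$-linearly to the whole algebra. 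Since the two branches cover disjoint parts of the basis, nothing overlaps and the definition is unambiguous.

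Next I check the three asserted properties in turn. For $\mathbb Q[tr(M)]$-linearity I must verify that absorbing a central scalar $t\in\mathbb Q[tr(M)]$ into different tensor positions yields the same output: in the first branch this uses $tr(tM_n)=t\cdot tr(M_n)$ (the formal trace is $\mathbb Q[tr(M)]$-linear, Definition \ref{awt}), while in the second branch it uses that $t$ is central so $(tM_i)M_n=M_i(tM_n)=t\cdot M_iM_n$. Degree preservation in $X$ is immediate: in the first branch $\deg(tr(M_n))=\deg(M_n)$ so $\sum_i\deg(M_i)$ is unchanged, and in the second $\deg(M_iM_n)=\deg(M_i)+\deg(M_n)$. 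Finally, when $n=1$ the group $S_0$ is trivial and only the first branch applies, yielding $\mathtt t(M)=tr(M)$, so $\mathtt t$ extends the formal trace of Definition \ref{frat}.

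The only subtlety worth flagging is that the commutation rule of Definition \ref{pua}(1) allows one to rewrite a basis vector by moving the permutation to the right of the tensor, so one must be sure that the left-normal form chosen above is genuinely a normal form and that no pair of symbolically distinct expressions actually coincides in the algebra and forces a hidden relation on $\mathtt t$. This is settled directly by the freeness statement of Remark \ref{basste}: the elements $\sigma\circ M_1\otimes\cdots\otimes M_n$ form a $\mathbb Q[tr(M)]$-basis, so the prescription above is forced to be well-defined. That the formally defined $\mathtt t$ is then compatible with evaluation into $M_d^{\otimes n}$ and reproduces the honest partial trace contraction is precisely the content of Proposition \ref{passs}, and so nothing further needs to be checked.
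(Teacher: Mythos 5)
The paper states Proposition \ref{passs1} without a written proof, treating it as an immediate formalization of the matrix-level Proposition \ref{passs}, so there is no authorial argument to compare against. Your proof is correct and supplies exactly the right amount of rigor: you invoke the freeness statement of Remark \ref{basste} (extended to the semidirect product via Definition \ref{pua}) to produce a $\mathbb Q[tr(M)]$-basis of left-normal-form elements $\sigma\circ M_1\otimes\cdots\otimes M_n$, split it by the coset decomposition of $S_n$ modulo $S_{n-1}$, define $\mathtt t$ on the basis by transcribing \eqref{puss} and \eqref{puss1}, and extend linearly. The three asserted properties then follow directly, and your closing observation that freeness forecloses any hidden relation (so that the commutation rule cannot create an inconsistency) is the key point that makes the definition legitimate, and your deferral of the ``this reproduces the honest partial trace'' claim to Proposition \ref{passs} is exactly how the paper intends the two propositions to fit together. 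One tiny cosmetic slip: for $n=1$ it is $S_1$ (the source group) that is trivial, not $S_0$; the conclusion $\mathtt t(M)=tr(M)$ is unaffected since both are trivial groups and only the first branch applies. You might also note explicitly that $\mathtt t(1)=tr(1)$ is the $n=1$, $M_1=1$ instance of your first branch rather than an independent stipulation, which makes the definition even cleaner.
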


From formulas \eqref{puss} and \eqref{puss1} we have, setting $tr(1)=d$,  for the identities  $\mathtt t(I_d(n))\subset I_d(n-1).$

Recall that the element, with $A=\{1,2,\ldots,n\}$   $$
\mathfrak C_{k,d}(x):= ( \sum_{\tau\in S_{A}}\epsilon_\tau\tau) \circ \mathfrak U_{k,d}(x) $$\begin{equation}\label{mirag}\mathfrak U_{k,d}(x) :=  \mathfrak T_{k,n}(x)+\sum_{j=1}^k(-1)^j \sigma_{ j}(x)\mathfrak T_{k-j,n}(x).
\end{equation} of Formula \eqref{mira} is an $n$--tensor identity of degree $k$    when evaluated in $d\times d$ matrices, $d=n+k-1$.  

\begin{remark}\label{unid}
From Theorem \ref{SFT}  follows in particular that there are no identities in degree $k$ on $s<d+1-k$  tensors and furthermore, up to a scalar constant,    $\mathfrak C_{k,d}(x)$ is the unique  identity in degree $k$ on $n= d+1-k$  tensors. 
\end{remark}
%
\begin{theorem}\label{Fine} Upon specialyzing $tr(1)=d$  we have, for $n\geq 1\iff k\leq d$:
\begin{equation}\label{eFine}
\mathtt   t(\mathfrak C_{k,d}(x) )=0,\ \mathtt t(\mathfrak C_{k,d}(x)\cdot 1^{n-1}\otimes x )=-(k+1)\cdot   \mathfrak C_{k+1,d}(x).
\end{equation}

\end{theorem}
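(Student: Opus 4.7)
The plan is to reduce both identities to symbolic statements at the level of the interpretation map $T^{(n)}_k:\mathbb Q[S_{n+k}]\to \mathcal T_{mult}(k,n)$. By Theorem \ref{Fac}, the polarization of $k!\,\mathfrak C_{k,d}(x)$ is $F_{k,d}(x_1,\ldots,x_k)=(-1)^k T^{(n)}_k(A_{d+1})(x_1,\ldots,x_k)$, where $A_{d+1}=\sum_{\sigma\in S_{d+1}}\epsilon_\sigma\sigma$. Since $\mathtt t$ is $\mathbb Q[tr(M)]$-linear and preserves the degree in $X$ (Proposition \ref{passs1}), it commutes with polarization and restitution, so both claims reduce to analogous multilinear assertions about $T^{(n)}_k(A_{d+1})$, after which the diagonal specialization $x_i=x$ produces the formulas as stated.

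The second equation is governed by the following identity, valid for every $\sigma\in S_{n+k}$:
\[
\mathtt t\bigl(T^{(n)}_{k,\sigma}(x_1,\ldots,x_k)\cdot 1^{\otimes(n-1)}\otimes x_{k+1}\bigr) = T^{(n-1)}_{k+1,\sigma}(x_{k+1},x_1,\ldots,x_k),
\]
where on the right we reinterpret the same $\sigma\in S_{n+k}$ using the shifted partition $A'=\{1,\ldots,n-1\}$, $B'=\{n,\ldots,n+k\}$. This is a one-line pairing check: both sides, evaluated against $y_1\otimes\cdots\otimes y_{n-1}$, yield the same invariant $tr(\sigma^{-1}\circ y_1\otimes\cdots\otimes y_{n-1}\otimes x_{k+1}\otimes x_1\otimes\cdots\otimes x_k)$ by Formula \eqref{muls1}. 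Summing this identity against $A_{d+1}$ and then specializing all $x_i=x$ (which trivializes the reindexing between $(A,B)$ and $(A',B')$) produces $(-1)^k k!\,\mathtt t(\mathfrak C_{k,d}(x)\cdot 1^{n-1}\otimes x)=(-1)^{k+1}(k+1)!\,\mathfrak C_{k+1,d}(x)$, which rearranges to $\mathtt t(\mathfrak C_{k,d}(x)\cdot 1^{n-1}\otimes x)=-(k+1)\mathfrak C_{k+1,d}(x)$.

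For the vanishing $\mathtt t(\mathfrak C_{k,d}(x))=0$, the same pairing principle shows that $\mathtt t(T^{(n)}_{k,\sigma})$ is computed by setting $y_n=1$ in the invariant defining $T^{(n)}_{k,\sigma}$; by Formula \eqref{phis} this either deletes $n$ as a fixed point of $\sigma$ (producing an explicit factor $tr(1)=d$) or simply shortens the cycle containing $n$. I would then regroup the sum $T^{(n)}_k(A_{d+1})$ by the resulting $\tilde\sigma\in S_d$: the unique preimage in which $n$ is fixed contributes $d\,\epsilon_{\tilde\sigma}\,T^{(n-1)}_{k,\tilde\sigma}$, while each of the $d$ preimages obtained by inserting $n$ somewhere inside a cycle of $\tilde\sigma$ contributes $-\epsilon_{\tilde\sigma}\,T^{(n-1)}_{k,\tilde\sigma}$, since raising a cycle length from $\ell$ to $\ell+1$ flips the sign. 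These cancel exactly, giving $\mathtt t(T^{(n)}_k(A_{d+1}))=0$ under $tr(1)=d$, hence $\mathtt t(\mathfrak C_{k,d}(x))=0$. The main obstacle in this approach is the sign bookkeeping for cycle insertion together with the reindexing between the partitions $(A,B)$ and $(A',B')$, both of which become transparent once the variables are collapsed to a single $x$.
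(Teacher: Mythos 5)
Your proposal is correct, and it takes a genuinely different route from the paper. The paper handles the case $n=1$ separately via the classical Newton/Cayley--Hamilton identities, and for $n\geq 2$ invokes Remark \ref{unid} (a corollary of the SFT, Theorem \ref{SFT}) to conclude that $\mathtt t(\mathfrak C_{k,d}(x))$ lives in a degree/tensor-rank range where no identity exists and that $\mathtt t(\mathfrak C_{k,d}(x)\cdot 1^{n-1}\otimes x)$ must be a scalar multiple of $\mathfrak C_{k+1,d}(x)$; it then pins down that scalar by extracting the coefficient of the leading term $1^{\otimes n-2}\otimes x^{k+1}$. You instead compute directly at the level of the interpretation map $T^{(n)}_k:\mathbb Q[S_{n+k}]\to\mathcal T_{mult}(k,n)$: the pairing identity $\mathtt t(T^{(n)}_{k,\sigma}\cdot 1^{n-1}\otimes x_{k+1})=T^{(n-1)}_{k+1,\sigma}(x_{k+1},x_1,\ldots,x_k)$ reduces the second formula to the symmetry of $F_{k+1,d}$ plus restitution (this is essentially what the paper later records, modulo a sign, in Remark \ref{mufo}), while the cancellation $d-d=0$ among preimages of a fixed $\tilde\sigma\in S_d$ (one fixing $n$ contributing $tr(1)\,\epsilon_{\tilde\sigma}$, and $d$ insertions each contributing $-\epsilon_{\tilde\sigma}$) gives the first. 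Your route buys two things: it is self-contained in that it does not invoke the SFT or Remark \ref{unid}, making Theorem \ref{Fine} logically prior to rather than downstream of Theorem \ref{SFT}; and the unspecialized form of your cancellation, $\mathtt t\bigl(T^{(n)}_k(A_{d+1})\bigr)=(tr(1)-d)\sum_{\tilde\sigma\in S_d}\epsilon_{\tilde\sigma}T^{(n-1)}_{k,\tilde\sigma}$, also proves Exercise \ref{eser} ($\mathtt t(\mathfrak C_{k,d})=(tr(1)-d)\mathfrak C_{k,d-1}$) for free. The only small point worth flagging explicitly is that the ``one-line pairing check'' verifies the claimed identities under evaluation in $M_d$; to promote them to identities in the symbolic algebra $T\langle X\rangle^{\otimes n}\ltimes\mathbb Q[S_n]$ one should appeal to Remark \ref{Forid} (no identities for $d\geq n+k$) together with the fact that the formal partial trace of Proposition \ref{passs1} is defined so as to be compatible with evaluation. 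That step is routine but should be stated.
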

\begin{proof} For $n=1,\ k=d$ we have $\mathfrak C_{d,d}(x)=x^d+\sum_{i=1}^d(-1)^i\sigma_{i}(x)x^{d-i}$ is the Cayley--Hamilton element and
$$\mathtt t(x^d+\sum_{i=1}^d(-1)^i\sigma_{i}(x)x^{d-i})=tr(x^d)+\sum_{i=1}^d(-1)^i\sigma_{i}(x)tr(x^{d-i})= 0,\  $$is the recursive formula expressing Newton symmetric functions in term of elementary ones. Finally $$ \mathtt t(x^{d+1}+\sum_{i=1}^d(-1)^i\sigma_{i}(x)t^{d-i+1})=tr(x^{d+1})+\sum_{i=1}^d(-1)^i\sigma_{i}(x)tr(x^{d-i+1})$$ is the Formula $\mathfrak C_{d+1,d}(x)$ expressing the $d+1$   Newton symmetric function in $d$ variables in term of the preceding ones.  

So assume $n\geq 2$,
both elements $\mathtt  t(\mathfrak C_{k,d}(x) )$ and $ \mathtt  t(\mathfrak C_{k,d}(x)\cdot 1^{n-1}\otimes x )$ are  tensor identities on $n-1=d  -k$ tensors, respectively of degree $k$ and $k+1$ for $d\times d$ matrices.  

\noindent Thus by the previous remark, on   degree of identities,   we have $\mathtt t(\mathfrak C_{k,d}(x) )=0$ and $\mathtt t(\mathfrak C_{k,d}(x)\cdot 1^{n-1}\otimes x )=\alpha\cdot  \mathfrak C_{k+1,d}(x)$ for some scalar $\alpha$.

 Observe that, by Proposition \ref{passs}
 $$  \sum_{\tau\in S_{1,2,\ldots,n}}\epsilon_\tau\tau=  (\sum_{\tau\in S_{1,2,\ldots,n-1}}\epsilon_\tau\tau )(1-\sum_{i=1}^{n-1}(i,n)) $$
 $$\implies\mathtt  t((\sum_{\tau\in S_{1,2,\ldots,n}}\epsilon_\tau\tau) \circ    \mathfrak U_{k,d}(x) \cdot 1^{n-1}\otimes x )$$ $$ \stackrel{\eqref{puss}}=(\sum_{\tau\in S_{1,2,\ldots,n-1}}\epsilon_\tau\tau )\mathtt t((1-\sum_{i=1}^{n-1}(i,n)) ) \circ    \mathfrak U_{k,d}(x) \cdot 1^{n-1}\otimes x ).$$  Thus $ \mathtt t((1-\sum_{i=1}^{n-1}(i,n))   \circ    \mathfrak U_{k,d}(x) \cdot 1^{n-1}\otimes x )=\alpha \cdot  \mathfrak U_{k+1,d}(x) $.

 We compute $\alpha$ as coefficient of the leading term $1^{\otimes n-2}\otimes x^  {k+1}$ in the previous Formula.
 
  This term arises only in $ \mathtt t((1-\sum_{i=1}^{n-1}(i,n)) ) \circ    \mathfrak T_{k,n}(x) \cdot 1^{n-1}\otimes x )$. In  fact   $ \mathfrak T_{k,n}(x) =\sum_{\underline h\in \mathcal P(k,n)}T_{\underline h}(x)  $  and we see, from    Formula \eqref{tui},  that  the only contributions can arise from $\mathtt t( - (n-1,n) A)$ with $A$ the terms 
 $$(\sum_{i+j=k,   }1^{\otimes n-2}\otimes x^  {i }\otimes  x^  {j })\cdot 1^{n-1}\otimes x =\sum_{i =0}^k1^{\otimes n-2}\otimes x^  {i }\otimes  x^  {k-i +1}$$$$\mathtt t( - (n-1,n)    \circ    \sum_{i =0}^k1^{\otimes n-2}\otimes x^  {i }\otimes  x^  {k-i +1})=-(k+1)\cdot 1^{\otimes n-2}\otimes x^  {k+1} .$$
 
   
 \end{proof}
 The specialization $tr(1)=d$ is necessary since for instance formally
 $$\mathtt t(\mathfrak C_{1,2}(x ))=\mathtt t((1-(1,2))\circ [x\otimes 1+  1\otimes x-tr(x))$$
 $$= tr(1)x+tr(x)  -tr(x) tr(1) -2x+tr(x)=( tr(1)-2)(x- tr(x)) .$$
$$\mathtt  t( \sum_{\tau\in S_{ d+1}}\epsilon_\tau\tau )\stackrel{\ref{passs}}=( tr(1)-d)\sum_{\tau\in S_{ d }}\epsilon_\tau\tau.$$
 \begin{exercise}\label{eser}
$\mathtt t(\mathfrak C_{k,d}(x ))=(tr(1)-d) \mathfrak C_{k,d-1}(x ) ,\ \forall k\leq d$.
\end{exercise}
 \begin{remark}\label{mufo}
For the multilinear identities of Formula \eqref{ledr} we have
$$F_{k+1,d}(x_1,\ldots,x_{k+1})=\mathtt  t(F_{k,d}(x_1,\ldots,x_k)\cdot 1^{\otimes d-k}\otimes x_{k+1}).$$
\end{remark}

 At this point one should introduce the operation $\mathtt  t$   in the definition of the {\em algebras} to be used to  deduce an identity from another.  So we change the definition \ref{Tid} of $T$ ideal  asking: 
 
 \begin{definition}\label{Tid1}
A sequence $\{J(n)\}$ of ideals $J(n)\subset \mathcal T (X,n)$  will be called a {\em $T$--ideal} if besides the conditions of  
 Definition \ref{Tid} it is also   stable under $\mathtt t$.   \end{definition}
 
 Under this new definition we finally have the conclusive  result.
  
   \begin{theorem}\label{SFT1}[SFT for equivariant maps]  The ideal $\{I_d(n)\}$  is generated, as $T$--ideal, by     the antisymmetrizer $\sum_{\sigma\in S_{d+1}}\epsilon_\sigma\sigma$ and $tr(1)=d$.
   
  \end{theorem}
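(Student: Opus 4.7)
The plan is to deduce Theorem \ref{SFT1} from Theorem \ref{SFT} together with the key recursive formula of Theorem \ref{Fine}. Concretely, Theorem \ref{SFT} says that the $T$-ideal (in the sense of Definition \ref{Tid}) $\{I_d(n)\}$ is generated by the $d+2$ identities $\mathfrak{C}_{k,d}(x)$, $k=0,\ldots,d+1$, together with $tr(1)=d$. Under the enlarged notion of $T$-ideal of Definition \ref{Tid1}, closure under the formal partial trace $\mathtt{t}$ becomes one of the admissible rules of deduction. So it suffices to show that all of $\mathfrak{C}_{1,d}(x),\mathfrak{C}_{2,d}(x),\ldots,\mathfrak{C}_{d+1,d}(x)$ are \emph{deduced} (in the sense of Definition \ref{Tid1}) from $\mathfrak{C}_{0,d}(x)=\sum_{\sigma\in S_{d+1}}\epsilon_\sigma\sigma$ together with the scalar relation $tr(1)=d$.

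The proof will proceed by induction on $k$. The base case $k=0$ is immediate: $\mathfrak{C}_{0,d}(x)$ is the generator itself. For the inductive step, assume $\mathfrak{C}_{k,d}(x)\in J$, where $J=\{J(n)\}$ denotes the $T$-ideal (in the sense of Definition \ref{Tid1}) generated by the antisymmetrizer and $tr(1)-d$. Since $\mathfrak{C}_{k,d}(x)$ lives in $\mathcal{T}(X,n)$ with $n=d+1-k\geq 1$, we may multiply it on the right by the element $1^{\otimes n-1}\otimes x\in \mathcal{T}(X,n)$ (which is a legitimate operation in an ordinary two-sided ideal of $\mathcal{T}(X,n)$, hence $J(n)$ is closed under it). Then we apply $\mathtt{t}$, which by Definition \ref{Tid1} maps $J(n)$ into $J(n-1)$. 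By Theorem \ref{Fine}, after specialization $tr(1)=d$ (i.e.\ modulo $tr(1)-d$, which lies in $J$),
\[
\mathtt{t}\bigl(\mathfrak{C}_{k,d}(x)\cdot 1^{\otimes n-1}\otimes x\bigr)=-(k+1)\,\mathfrak{C}_{k+1,d}(x).
\]
Dividing by the nonzero scalar $-(k+1)$ (we are over $\mathbb{Q}$), this shows $\mathfrak{C}_{k+1,d}(x)\in J(n-1)$, completing the induction up to $k=d+1$.

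The one subtlety deserving comment is the role of $tr(1)=d$. The identity in Theorem \ref{Fine} is valid only after this specialization; without it, as the paragraph after Theorem \ref{Fine} illustrates, one instead gets terms of the form $(tr(1)-d)\cdot(\ldots)$ which are harmless modulo the generator $tr(1)-d$. So the formal argument is: working in $J$ one has $tr(1)-d\equiv 0$, and under this congruence the formulas of Theorem \ref{Fine} become exact equalities, producing each $\mathfrak{C}_{k+1,d}(x)$ from $\mathfrak{C}_{k,d}(x)$. The main obstacle is really just confirming that every step used (multiplication by the specific element $1^{\otimes n-1}\otimes x$, application of $\mathtt{t}$, and reduction modulo $tr(1)-d$) is one of the allowed deduction rules in Definition \ref{Tid1}; once that bookkeeping is in place, the induction runs and Theorem \ref{SFT} finishes the proof by ensuring that no further generators beyond the $\mathfrak{C}_{k,d}(x)$ are needed.
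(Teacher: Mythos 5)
Your proof is correct and follows exactly the same line as the paper: the paper's proof of Theorem \ref{SFT1} is a one-line appeal to the recursive Formula \eqref{eFine} of Theorem \ref{Fine} to produce all $\mathfrak C_{k,d}(x)$ from $\mathfrak C_{0,d}(x)$, followed by an invocation of Theorem \ref{SFT}. Your write-up simply makes the induction and the bookkeeping of allowed deduction rules under Definition \ref{Tid1} explicit.
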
\begin{proof}
Formula \eqref{eFine}  gives recursively the $d+2$ formulas  $\mathfrak C_{k,d}(x) $  from  the antisymmetrizer $\mathfrak C_{0,d}(x) =\sum_{\sigma\in S_{d+1}}\epsilon_\sigma\sigma$ and then we apply Theorem \ref{SFT}.
\end{proof}
There is a final remarkable fact. 

Assume we take the algebras  $T\langle X\rangle^{\otimes n}\ltimes \mathbb Q[S_n]/\bar I_d(n)$  modulo the $T$--ideal $\bar I_d$ generated by the antisymmetrizer $A_{d+1}$  and no condition on $tr(1)$. From Exercise \ref{eser} we have
$$\mathtt t(A_{d+1}) =(tr(1)-d)A_{d }\implies \mathtt t^d(A_{d+1}) =\prod_{i=1}^d(tr(1)-i)\in \bar I_d.$$
The algebra $\mathbb Q[\lambda]/ \prod_{i=1}^d(\lambda-i)=\oplus_{i=1}^d \mathbb Q$ and so $T\langle X\rangle^{\otimes n}\ltimes \mathbb Q[S_n]/\bar I_d(n)$ decomposes as a direct sum of $d$ summands, in the $i^{th}$ summand we have $tr(1)=i$. But now by the same formula  $\mathtt t(A_{d+1}) =(tr(1)-d)A_{d }$ we deduce from $A_{d+1}$ and $tr(1)=i$ that in the the $i^{th}$ summand we have also $A_{i+1}=0$. Therefore we deduce the decomposition as direct sum of the  $d$ algebras of equivariant maps for $i\times i$ matrices, $i=1,\ldots,d$.
\begin{theorem}\label{suff}
$$T\langle X\rangle^{\otimes n}\ltimes \mathbb Q[S_n]/\bar I_d(n)=\oplus_{i=1}^dT\langle X\rangle^{\otimes n}\ltimes \mathbb Q[S_n]/  I_i(n)=\oplus_{i=1}^d\mathcal T_{X}^n(\mathbb Q^i).$$
\end{theorem}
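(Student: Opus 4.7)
The plan is to realize $A := T\langle X\rangle^{\otimes n}\ltimes \mathbb Q[S_n]/\bar I_d(n)$ as a direct sum indexed by the possible values of the central scalar $tr(1)$, and to identify each summand with $\mathcal T_X^n(\mathbb Q^i)$ via Theorem \ref{SFT1}. I would proceed in three steps, formalizing the outline the author sketches just before the statement.

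First, I would produce central orthogonal idempotents inside $A$. By Exercise \ref{eser}, $\mathtt t(\mathfrak C_{k,d}(x))=(tr(1)-d)\mathfrak C_{k,d-1}(x)$; specializing to $k=0$ gives $\mathtt t(A_{d+1})=(tr(1)-d)A_d$, and iterating the exercise with $d$ replaced by $d-1,d-2,\ldots$ yields
\begin{equation*}
\mathtt t^{j}(A_{d+1})=\prod_{\ell=0}^{j-1}(tr(1)-d+\ell)\cdot A_{d+1-j},\qquad 0\leq j\leq d.
\end{equation*}
For $j=d$ this gives the scalar $\prod_{i=1}^d(tr(1)-i)$, which therefore lies in $\bar I_d(0)$ since $\bar I_d$ is closed under $\mathtt t$ by Definition \ref{Tid1}. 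Because $tr(1)$ is central in $A$, the Chinese Remainder isomorphism $\mathbb Q[\lambda]/\prod_{i=1}^d(\lambda-i)\cong\bigoplus_{i=1}^d\mathbb Q$ produces orthogonal central idempotents $e_1,\ldots,e_d\in A$ with $\sum_i e_i=1$ and $tr(1)\cdot e_i=i\,e_i$, giving the algebra decomposition $A=\bigoplus_{i=1}^d e_iA$.

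Second, I would show $e_iA\simeq\mathcal T_X^n(\mathbb Q^i)$ by exhibiting mutually inverse surjections. For the ``forward'' direction, the displayed identity at $j=d-i$ lies in $\bar I_d$, so in $e_iA$ it reads $\bigl[\prod_{\ell=0}^{d-i-1}(i-d+\ell)\bigr]A_{i+1}=0$; the scalar equals $(-1)^{d-i}(d-i)!$, a nonzero rational, so $A_{i+1}=0$ in $e_iA$. Combined with $tr(1)=i$, Theorem \ref{SFT1} then implies that $e_iA$ is a quotient of $\mathcal T_X^n(\mathbb Q^i)=T\langle X\rangle^{\otimes n}\ltimes\mathbb Q[S_n]/I_i(n)$. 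For the opposite direction, $A_{d+1}$ evaluates to zero on $(\mathbb Q^i)^{\otimes(d+1)}$ for every $i\leq d$ because $\Lambda^{d+1}\mathbb Q^i=0$; hence $\bar I_d(n)\subset I_i(n)$ and the evaluation $\pi_i\colon A\twoheadrightarrow\mathcal T_X^n(\mathbb Q^i)$ is well defined. Since $\pi_i(tr(1))=i$, we have $\pi_i(e_j)=\delta_{ij}$, so $\pi_i$ vanishes on $e_jA$ for $j\neq i$ and factors through $e_iA$, producing the inverse surjection. Summing over $i$ gives the desired decomposition.

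The main obstacle is the careful verification of the iterated formula $\mathtt t^j(A_{d+1})=\prod_{\ell=0}^{j-1}(tr(1)-d+\ell)A_{d+1-j}$. One must check that $\mathtt t$ commutes with multiplication by the central element $(tr(1)-d)$ (which it does by the $\mathbb Q[tr(M)]$-linearity of $\mathtt t$ recorded in Proposition \ref{passs1}), and that applying $\mathtt t$ to the zero-variable antisymmetrizer $A_k\in\mathbb Q[S_k]\subset T\langle X\rangle^{\otimes k}\ltimes\mathbb Q[S_k]$ reduces cleanly to the $k=0$ case of Exercise \ref{eser} with the shifted value of $d$, with no lower-order terms intruding. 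Once this induction is in hand, the rest is essentially the Chinese Remainder Theorem applied to the central polynomial relation on $tr(1)$, combined with the previously established Theorem \ref{SFT1}.
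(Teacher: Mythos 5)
Your proposal is correct and follows essentially the same route as the paper: iterate $\mathtt t(A_{d+1})=(tr(1)-d)A_d$ to obtain $\prod_{i=1}^d(tr(1)-i)\in\bar I_d$, invoke the Chinese Remainder Theorem for the central element $tr(1)$ to split off idempotents, then use $A_{i+1}=0$ and $tr(1)=i$ in the $i$-th summand together with Theorem~\ref{SFT1}. You merely supply details the paper leaves implicit — the exact form of the idempotents, the verification $\pi_i(e_j)=\delta_{ij}$, and the pair of mutually inverse maps $\phi,\psi$ identifying $e_iA$ with $\mathcal T_X^n(\mathbb Q^i)$ — which strengthens rather than changes the argument.
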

\section{The algebra of equivariant maps}
\subsection{The structure of $\mathcal T_{X}^n(V)$}
What can we say about the algebra $\mathcal T_{X}^n(V)$? We assume that $X$ has at least 2 elements, the case of just one variable being special and left to the reader.  Let us first recall the Theory  for $n=1$, for a detailed study we refer to the book \cite{agpr}. 

The algebra $T_d\langle \Xi\rangle:=\mathcal T_{X}^1(V)$   is the free algebra with trace     $T\langle X\rangle$ modulo   the $d$--Cayley Hamilton identity and $tr(1)=d$. This   algebra  is a domain  generated by $k$ generic matrices  $\Xi=\{\xi_1,\ldots,\xi_k\}  $ of Formula \eqref{geme}, and the traces of their monomials. 

 Its center is the algebra of  invariants $T^{(0)}_d\langle \Xi\rangle$. If $Q^{(0)}_d\langle \Xi\rangle$ is the field of fractions of $T^{(0)}_d\langle \Xi\rangle$ then $Q _d\langle \Xi\rangle:=T_d\langle \Xi\rangle\otimes_{T^{(0)}_d\langle \Xi\rangle}Q^{(0)}_d\langle \Xi\rangle$ is a division algebra of dimension $d^2$  over its center {\em  the equivariant rational functions}. 

The subalgebra $ \mathbb Q[\xi_1,\ldots,\xi_k]\subset T_d\langle \Xi\rangle $ is called the {\em algebra of generic matrices} and it is the free algebra modulo the polynomial identities of   $d\times d$  matrices.  One of the remarkable Theorems of the theory  is that $ \mathbb Q[\xi_1,\ldots,\xi_k]$ has a non trivial center $\mathfrak Z_d(X)\subset T^{(0)}_d\langle \Xi\rangle$. 

An element $c\in \mathfrak Z_d(X)$    with no constant coefficient is called a {\em  central polynomial}.  Moreover the fields of fractions of $\mathfrak Z_d(X)$ and $T^{(0)}_d\langle \Xi\rangle$ coincide. In fact from a strong Theorem of M. Artin  \cite{Az},  \cite{ArM} 
one has that   (cf. \cite{agpr} Theorem 10.3.2), if $c $ is a central polynomial 
 \begin{equation}\label{egg}
\mathbb Q[\xi_1,\ldots,\xi_k][c^{-1}]= T_d\langle \Xi\rangle[c^{-1}] 
\end{equation}   is an Azumaya algebra of rank $d^2$ over its center $T^{(0)}_d\langle \Xi\rangle[c^{-1}]$.

Take  the tensor product $$Q _d\langle \Xi\rangle^{\otimes n} := Q _d\langle \Xi\rangle\otimes_{Q^{(0)}_d\langle \Xi\rangle} Q _d\langle \Xi\rangle \ldots\otimes  Q _d\langle \Xi\rangle\otimes_{Q^{(0)}_d\langle \Xi\rangle} Q _d\langle \Xi\rangle$$ of $n$ copies of  $Q _d\langle \Xi\rangle$ over its center $ Q^{(0)}_d\langle \Xi\rangle.$  This is  a central simple algebra  contained in the matrix algebra 
$$M_d(\mathbb Q(\xi^{(i)}_{h,k}))^{\otimes n}:=$$
$$ \footnotesize{  M_d(\mathbb Q(\xi^{(i)}_{h,k}))\otimes_{\mathbb Q(\xi^{(i)}_{h,k})} \otimes M_d(\mathbb Q(\xi^{(i)}_{h,k}))\ldots\otimes M_d(\mathbb Q(\xi^{(i)}_{h,k}))\otimes_{\mathbb Q(\xi^{(i)}_{h,k})} M_d(\mathbb Q(\xi^{(i)}_{h,k}))}$$  and 
$$ M_d(\mathbb Q(\xi^{(i)}_{h,k}))^{\otimes n}=  M_d(\mathbb Q)^{\otimes n}\otimes_{\mathbb Q  } Q(\xi^{(i)}_{h,k})$$

\begin{lemma}\label{tesra}
\begin{equation}\label{goe}
   Q _d\langle \Xi\rangle^{\otimes n} \otimes_{Q^{(0)}_d\langle \Xi\rangle}\mathbb Q(\xi^{(i)}_{h,k}))= M_d(\mathbb Q(\xi^{(i)}_{h,k}))^{\otimes n} $$$$
    \implies   Q _d\langle \Xi\rangle^{\otimes n}  = \left(M_d(\mathbb Q(\xi^{(i)}_{h,k}))^{\otimes n}\right) ^{GL(d,\mathbb Q)}.\end{equation}

\end{lemma}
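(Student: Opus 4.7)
The strategy is to reduce to the case $n=1$, where the splitting result is standard, and then bootstrap to general $n$ by base change, followed by taking $GL(d,\mathbb Q)$-invariants.

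First I would establish the $n=1$ version, namely $Q_d\langle \Xi\rangle \otimes_{Q_d^{(0)}\langle \Xi\rangle} \mathbb Q(\xi^{(i)}_{h,k}) = M_d(\mathbb Q(\xi^{(i)}_{h,k}))$. This is precisely where the theorem of M.~Artin invoked around Formula~\eqref{egg} pays off: localizing $T_d\langle \Xi\rangle$ at a central polynomial $c$ gives an Azumaya algebra of rank $d^2$ over its center $T_d^{(0)}\langle \Xi\rangle[c^{-1}]$. Passing to the fraction field $Q_d^{(0)}\langle \Xi\rangle$ yields the central simple algebra $Q_d\langle\Xi\rangle$ of dimension $d^{2}$ over $Q_d^{(0)}\langle \Xi\rangle$. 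A further extension of scalars to $\mathbb Q(\xi^{(i)}_{h,k})$ splits it: indeed this extension is already known to contain $M_d(\mathbb Q(\xi^{(i)}_{h,k})) = M_d(\mathbb Q)\otimes_\mathbb Q \mathbb Q(\xi^{(i)}_{h,k})$ (the matrix algebra in which the generic matrices were defined), and a central simple algebra of the correct dimension contained in $M_d$ over the same field must equal it.

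For general $n$, I would simply distribute the base change across the $n$-fold tensor product over $Q_d^{(0)}\langle\Xi\rangle$:
\begin{equation*}
Q_d\langle \Xi\rangle^{\otimes n} \otimes_{Q_d^{(0)}\langle\Xi\rangle} \mathbb Q(\xi^{(i)}_{h,k}) \;=\; \bigotimes_{j=1}^n\bigl(Q_d\langle \Xi\rangle \otimes_{Q_d^{(0)}\langle\Xi\rangle} \mathbb Q(\xi^{(i)}_{h,k})\bigr) \;=\; M_d(\mathbb Q(\xi^{(i)}_{h,k}))^{\otimes n},
\end{equation*}
where the middle tensor products are taken over $\mathbb Q(\xi^{(i)}_{h,k})$. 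This is the standard compatibility of tensor products with extension of scalars, and yields the first equality of \eqref{goe}.

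For the second equality, I would take $GL(d,\mathbb Q)$-invariants on both sides. By construction $Q_d\langle \Xi\rangle^{\otimes n}$ lies in the invariant part of $M_d(\mathbb Q(\xi^{(i)}_{h,k}))^{\otimes n}$, so the $GL(d,\mathbb Q)$-action on the tensor factor $Q_d\langle\Xi\rangle^{\otimes n}$ is trivial, while the action on $\mathbb Q(\xi^{(i)}_{h,k})$ has invariants exactly $Q_d^{(0)}\langle\Xi\rangle$. Choosing a $Q_d^{(0)}\langle\Xi\rangle$-basis of the finite-dimensional (dimension $d^{2n}$) space $Q_d\langle \Xi\rangle^{\otimes n}$, the left-hand side decomposes as a direct sum of copies of $\mathbb Q(\xi^{(i)}_{h,k})$ as $GL(d,\mathbb Q)$-modules, whose invariants reassemble to $Q_d\langle \Xi\rangle^{\otimes n} \otimes_{Q_d^{(0)}\langle\Xi\rangle} Q_d^{(0)}\langle\Xi\rangle = Q_d\langle \Xi\rangle^{\otimes n}$.

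\paragraph{Main obstacle.} The only substantive step is the splitting of $Q_d\langle \Xi\rangle$ over $\mathbb Q(\xi^{(i)}_{h,k})$; everything else is formal base change and the compatibility of invariants with scalar extension against a trivial factor (which is automatic here because the trivial factor has finite dimension over the coefficient field). That splitting is precisely the content of the Artin--Procesi theorem recorded in \eqref{egg}, so no new computation is required — the lemma is essentially a packaging of that result, combined with the observation that $GL(d,\mathbb Q)$-invariants commute with the base change $Q_d^{(0)}\langle\Xi\rangle \hookrightarrow \mathbb Q(\xi^{(i)}_{h,k})$.
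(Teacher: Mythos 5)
Your proof is correct and takes essentially the same route as the paper: the key input is that $Q_d\langle\Xi\rangle$ is a central simple algebra of dimension $d^2$ over $Q_d^{(0)}\langle\Xi\rangle$ (coming from the Azumaya structure), so that extending scalars to $\mathbb Q(\xi^{(i)}_{h,k})$ splits it, and the second claim follows by taking $GL(d,\mathbb Q)$-invariants using $Q_d^{(0)}\langle\Xi\rangle=\mathbb Q(\xi^{(i)}_{h,k})^{GL(d,\mathbb Q)}$. The only cosmetic difference is that the paper handles the first equality for general $n$ in one stroke — the natural map is an isomorphism because both sides are central simple of the same dimension $d^{2n}$ — whereas you reduce to $n=1$ and then distribute the base change; this is the same argument reorganized, and your more explicit treatment of why invariants commute with the scalar extension (via a finite $Q_d^{(0)}\langle\Xi\rangle$-basis) is a reasonable unpacking of the paper's terser final sentence.
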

\begin{proof}  The natural map of  $ Q _d\langle \Xi\rangle^{\otimes n} \otimes_{Q^{(0)}_d\langle \Xi\rangle}\mathbb Q(\xi^{(i)}_{h,k}))$ to $ M_d(\mathbb Q(\xi^{(i)}_{h,k}))^{\otimes n}$ is an isomorphism since they are both central simple algebras  of the same dimension $d^{2n}$  over  the field $\mathbb Q(\xi^{(i)}_{h,k}))$.

Since $Q^{(0)}_d\langle \Xi\rangle=\mathbb Q(\xi^{(i)}_{h,k}))^{GL(d,\mathbb Q)}$ the second claim follows.
\end{proof}  In the same way we have  the universal faithfully flat splitting, \cite{agpr} Corollary 10.4.3.
\begin{equation}\label{azi0}
\mathbb Q[\xi_1,\ldots,\xi_k][c^{-1}]  ^{\otimes n}\otimes_{T^{(0)}_d\langle \Xi\rangle[c^{-1}]} \mathbb Q[\xi^{(i)}_{h,k}][c^{-1}] =  M_d(\mathbb Q[\xi^{(i)}_{h,k}][c^{-1}])^{\otimes n}   
\end{equation}
and an isomorphism at the level of Azumaya algebra
\begin{equation}\label{azi}
\mathbb Q[\xi_1,\ldots,\xi_k][c^{-1}]  ^{\otimes n}  = \left(M_d(\mathbb Q[\xi^{(i)}_{h,k}][c^{-1}])^{\otimes n}\right) ^{GL(d,\mathbb Q)}$$$$=\left(M_d(\mathbb Q)^{\otimes n}\otimes_{ \mathbb Q}\mathbb Q[\xi^{(i)}_{h,k}][c^{-1}]\right) ^{GL(d,\mathbb Q)},
\end{equation}
 From Formula \eqref{defeq} we have that $\mathcal T_{X}^n(V)\subset Q _d\langle \Xi\rangle^{\otimes n} $ an we claim
 \begin{theorem}\label{tenaz}
$\mathcal T_{X}^n(V)$ is a prime algebra, if $|X|>1$  its center  is $T^{(0)}_d\langle \Xi\rangle$   and, for all central polynomial  $c $     we have
\begin{equation}\label{locca}
\mathcal T_{X}^n(V)[c^{-1}]=  \mathbb Q[\xi_1,\ldots,\xi_k][c^{-1}]  ^{\otimes n}
\end{equation} 
The tensor power is with respect to the center of  $ \mathbb Q[\xi_1,\ldots,\xi_k][c^{-1}] $.
\end{theorem}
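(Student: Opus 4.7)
The plan is to establish the localization identity \eqref{locca} first, since it is the technical core; both primeness and the description of the center will then follow as formal corollaries via standard Azumaya-algebra arguments applied to the embedding furnished by Lemma \ref{tesra}.

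I would begin from \eqref{defeq}: $\mathcal T_X^n(V) = [M_d^{\otimes n} \otimes \mathbb Q[\xi^{(i)}_{a,b}]]^{GL(d,\mathbb Q)}$. Since $c \in T_d^{(0)}\langle \Xi\rangle$ is a $GL$-invariant element of the commutative coefficient ring $\mathbb Q[\xi^{(i)}_{a,b}]$, localization at $c$ is flat and $GL$-equivariant. Because $GL(d,\mathbb Q)$ is reductive in characteristic zero, the functor of $GL$-invariants is exact and commutes with this flat base change, so
\[
\mathcal T_X^n(V)[c^{-1}] = \bigl[M_d^{\otimes n}\otimes \mathbb Q[\xi^{(i)}_{a,b}][c^{-1}]\bigr]^{GL(d,\mathbb Q)}.
\]
By the Artin splitting \eqref{azi}, the right-hand side equals $\mathbb Q[\xi_1,\ldots,\xi_k][c^{-1}]^{\otimes n}$, with tensor product taken over $T_d^{(0)}\langle \Xi\rangle[c^{-1}]$, establishing \eqref{locca}.

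With \eqref{locca} in hand the other two claims fall out. For primeness, $\mathcal T_X^n(V)[c^{-1}]$ is Azumaya of constant rank $d^{2n}$ over the integral domain $T_d^{(0)}\langle \Xi\rangle[c^{-1}]$, hence a prime ring. By Lemma \ref{tesra}, $\mathcal T_X^n(V)$ embeds into the central simple algebra $Q_d\langle \Xi\rangle^{\otimes n}$ in which $c$ is invertible, so $c$ is a non-zero-divisor in $\mathcal T_X^n(V)$ and the localization map $\mathcal T_X^n(V)\hookrightarrow \mathcal T_X^n(V)[c^{-1}]$ is injective; the standard two-sided-ideal transfer (if $IJ=0$ then $I[c^{-1}]\cdot J[c^{-1}]=0$ in the prime localization, so one factor is killed by a power of $c$, hence zero) yields primeness. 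For the center, the inclusion $T_d^{(0)}\langle \Xi\rangle \subseteq Z(\mathcal T_X^n(V))$ is evident. Conversely, the assumption $|X|>1$ ensures that the Azumaya rank of $\mathcal T_X^n(V)[c^{-1}]$ is exactly $d^{2n}$, so its center is $T_d^{(0)}\langle \Xi\rangle[c^{-1}]$, whence any $z\in Z(\mathcal T_X^n(V))$ equals $\beta\cdot 1^{\otimes n}$ with $\beta \in T_d^{(0)}\langle \Xi\rangle[c^{-1}]$. Expanding $1^{\otimes n}=\sum_{i_1,\ldots,i_n} e_{i_1,i_1}\otimes\cdots\otimes e_{i_n,i_n}$ in matrix units and using $z\in M_d^{\otimes n}\otimes \mathbb Q[\xi^{(i)}_{a,b}]$ forces $\beta\in \mathbb Q[\xi^{(i)}_{a,b}]$, and $GL$-invariance then places $\beta$ in $\mathbb Q[\xi^{(i)}_{a,b}]^{GL(d,\mathbb Q)}=T_d^{(0)}\langle \Xi\rangle$.

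The main obstacle I expect is not any single deep step but rather the bookkeeping of the tensor-product conventions that intervene: over $\mathbb Q$, over $T_d^{(0)}\langle \Xi\rangle$, over $T_d^{(0)}\langle \Xi\rangle[c^{-1}]$, and (implicitly) over $Q_d^{(0)}\langle \Xi\rangle$. The very statement \eqref{locca} only makes sense once these base rings are correctly matched, and both the invocation of reductivity to commute $GL$-invariants with localization at an invariant element and the identification with the Azumaya algebra $\mathbb Q[\xi_1,\ldots,\xi_k][c^{-1}]^{\otimes n}$ via \eqref{azi} need this care.
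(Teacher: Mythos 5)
Your proof is correct, and it takes a genuinely different route from the paper's. The paper does not invoke \eqref{azi} together with the reductivity argument; instead it explicitly locates the image of the symmetric group generators inside the Azumaya tensor power. Concretely, the paper cites Theorem~\ref{Gol} to produce the Goldman elements $\mathtt s_i\in \mathbb Q[\xi_1,\ldots,\xi_k][c^{-1}]^{\otimes n}$, notes that by the uniqueness property of the Goldman element $\pi_n(i,i+1)=\mathtt s_i$, and combines this with closure of $\mathbb Q[\xi_1,\ldots,\xi_k][c^{-1}]$ under trace (from \eqref{egg}) to conclude that $\pi_n$ surjects onto $\mathbb Q[\xi_1,\ldots,\xi_k][c^{-1}]^{\otimes n}$ after inverting $c$; injectivity of $\pi_n$ on $\mathcal T_X^n(V)[c^{-1}]$ then gives \eqref{locca}. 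Your argument instead commutes $GL$-invariants past localization at the invariant element $c$ (using that $GL$ is linearly reductive and acts rationally, so invariants are exact and commute with the filtered colimit defining localization), and then reads \eqref{locca} off directly from the stated splitting \eqref{azi}. The trade-off is clear: the paper's Goldman-element computation is concrete and self-contained and also \emph{explains} \eqref{azi} rather than assuming it, whereas your version is shorter but treats \eqref{azi} as already established (which is defensible — it does follow from the faithfully flat splitting \eqref{azi0} by descent, since $\mathbb Q[\xi_1,\ldots,\xi_k][c^{-1}]^{\otimes n}$ is a finitely generated projective module over $T^{(0)}_d\langle\Xi\rangle[c^{-1}]$ on which $GL$ acts trivially). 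Your primeness and center arguments are sound; for the center the paper gives a slightly different justification of the $|X|>1$ hypothesis (two generic matrices already generate $M_d$, so the tensor variables generate the full tensor algebra, which has trivial center), while you argue via the Azumaya rank of the localization and then strip off denominators with matrix units — both are correct, and they isolate the same essential fact.
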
  If $|X|>1$ we have that two generic matrices generate $d\times d$ matrices and their corresponding tensor variables generate the tensor power which is a matrix algebra with center $\mathbb Q$ therefore the center of  $\mathcal T_{X}^n(V)$ is formed by the scalar valued equivariant maps, that is    the invariants $T^{(0)}_d\langle \Xi\rangle$.

 In order to prove the remaining part of the   Theorem let us recall a Theorem which   is attributed to Oscar Goldman in  the book of M. A. Knus, M. Ojanguren,  \textit{Th\'eorie de la descente et alg\`ebres d'Azumaya} page 112 
\cite{KnusO}.

If $R$  is a rank $n^2$ Azumaya algebra over its center $A$   the map $$\pi:R\otimes _AR^{op}\to End_A(R),\ \pi(
\sum_ia_i\otimes b_i)(x)=\sum_ia_ixb_i$$ is an isomorphism. 

Then define $\mathtt s \in R\otimes_A$ by $\pi(\mathtt s)(x)=tr(x).$ The element $\mathtt s$ is called the {\em Goldman element}.
\begin{theorem}\label{Gol}
We have
\begin{equation}\label{Gol1}
\mathtt s^2=1,\quad \mathtt s(a\otimes b)\mathtt s^{-1}=b\otimes a
\end{equation}Moreover for every map $A\to B$ so that $B\otimes_AR\simeq M_n(B)$ the element $\mathtt s$   maps to the permutation operator $(1,2)\in M_n(B)\otimes_BM_n(B)$.
\end{theorem}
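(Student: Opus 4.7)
The strategy is faithfully flat descent: establish the explicit description (the third claim) by matrix computation in a splitting of $R$, then deduce the two algebraic identities $\mathtt s^2=1$ and $\mathtt s(a\otimes b)\mathtt s^{-1}=b\otimes a$ from the corresponding identities for the permutation operator.

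I first set up base-change compatibility. For any $A$-algebra $B$, setting $R_B:=B\otimes_A R$, one has a canonical isomorphism $(R\otimes_A R^{op})\otimes_A B\simeq R_B\otimes_B R_B^{op}$ that intertwines $\pi$ with $\pi_B$, and the reduced trace of $R$ base-changes to the reduced trace of $R_B$. The defining equation $\pi(\mathtt s)(x)=tr(x)$ is therefore preserved, so $\mathtt s\otimes 1$ is the Goldman element of $R_B$. Since every rank-$n^2$ Azumaya algebra admits a faithfully flat (\'etale) splitting $A\to B$ with $R_B\simeq M_n(B)$, and since identities in $R\otimes_A R^{op}$ can be checked after base change along such a map, it suffices to prove all three assertions in the split case $R=M_n(B)$.

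In the split case I take the standard matrix units $\{e_{ij}\}$ and verify by direct calculation that $\sum_{i,j}e_{ij}\,x\,e_{ji}=tr(x)\cdot I_n$, so $\mathtt s=\sum_{i,j}e_{ij}\otimes e_{ji}$. Under the natural identification of $M_n(B)\otimes_B M_n(B)^{op}$ with the matrix algebra $M_n(B)\otimes_B M_n(B)=End(V\otimes V)$ for $V=B^n$, this element acts on $V\otimes V$ by $e_k\otimes e_l\mapsto e_l\otimes e_k$, hence equals the permutation operator $(1,2)$. The third claim follows. The first two are then immediate: the swap is an involution, giving $\mathtt s^2=\id$; and conjugation by the swap intertwines the factors, since $(1,2)(a\otimes b)(1,2)^{-1}(v\otimes w)=(1,2)(aw\otimes bv)=bv\otimes aw=(b\otimes a)(v\otimes w)$. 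Faithful flatness of $A\to B$ then descends these identities from $R_B\otimes_B R_B^{op}$ back to $R\otimes_A R^{op}$.

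The main obstacle is the bookkeeping between the algebra $R\otimes_A R^{op}$ (where $\pi$ lives) and the matrix algebra $M_n(B)\otimes_B M_n(B)$ (where the permutation operator $(1,2)$ naturally acts on $V\otimes V$): one must check that the explicit formula $\sum e_{ij}\otimes e_{ji}$ represents the Goldman element on one side and the swap on the other under compatible identifications, and correspondingly that the multiplicative structures line up so that $\mathtt s^2=\id$ really translates to $(1,2)^2=\id$. Once this compatibility is in place, the descent argument and the elementary matrix-unit computation complete the proof.
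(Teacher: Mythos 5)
The paper does not actually prove Theorem \ref{Gol}: it is stated as a recalled result, attributed to Goldman via Knus--Ojanguren page 112, and no proof is given. So your blind proof cannot be compared against the paper's; it has to be judged on its own. Your strategy --- base change to a faithfully flat splitting, identify the Goldman element with the swap $\sum_{i,j} e_{ij}\otimes e_{ji}$ by the matrix-unit computation $\sum_{i,j}e_{ij}xe_{ji}=tr(x)\cdot 1$, then descend the identities --- is exactly the standard argument, and the individual computations you record are correct.

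However, the ``bookkeeping'' between $R\otimes_A R^{op}$ and $M_n(B)\otimes_B M_n(B)$ that you flag at the end and leave undone is not a cosmetic point: it is where the content lives, and as written the statement of the theorem cannot literally be correct. If one takes the paper's $\pi:R\otimes_A R^{op}\to End_A(R)$, $a\otimes b\mapsto(x\mapsto axb)$, seriously \emph{as a ring isomorphism} (which forces the multiplication $(a\otimes b)(c\otimes d)=ac\otimes db$ on the source), then $\pi(\mathtt s)$ is the operator $x\mapsto tr(x)\cdot 1$, and composing it with itself gives $x\mapsto tr(x)\,tr(1)\cdot 1=n\cdot tr(x)\cdot 1$, so $\mathtt s^2=n\,\mathtt s$, not $1$. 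The split-case computation confirms this: in $M_n\otimes M_n^{op}$ one finds $\bigl(\sum e_{ij}\otimes e_{ji}\bigr)^2=n\sum e_{ij}\otimes e_{ji}$. The identities $\mathtt s^2=1$ and $\mathtt s(a\otimes b)\mathtt s^{-1}=b\otimes a$ hold instead in $R\otimes_A R$ with the \emph{ordinary} tensor-product multiplication $(a\otimes b)(c\otimes d)=ac\otimes bd$, where the map $a\otimes b\mapsto(x\mapsto axb)$ is only an $A$-module isomorphism, not a ring map. That is also consistent with how the paper later \emph{uses} the result, placing $\mathtt s_1\in \mathbb Q[\xi_1,\ldots,\xi_k][c^{-1}]^{\otimes 2}$, an ordinary tensor square over the center, not a tensor with the opposite algebra. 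So the fix you need is: work throughout in $R\otimes_A R$ with its usual algebra structure; use the $A$-linear isomorphism $R\otimes_A R\simeq End_A(R)$ only to \emph{define} $\mathtt s$ as the preimage of $tr$; and note this $A$-linear isomorphism and the reduced trace both commute with base change, so faithfully flat descent applies. With that replacement your argument is complete, and the split-case computation $\bigl(\sum e_{ij}\otimes e_{ji}\bigr)^2=\bigl(\sum_i e_{ii}\bigr)\otimes\bigl(\sum_j e_{jj}\bigr)=1\otimes 1$ and $\mathtt s(a\otimes b)\mathtt s=b\otimes a$ then really do descend.
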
   
%
%
%
%

\begin{proof}
[Proof of Theorem \ref{tenaz}]We have a homomorphism $\pi_n$ of Formula \eqref{ilpin}

$$\pi_n:T\langle X\rangle^{\otimes n}\ltimes \mathbb Q[S_n]\to M_d(\mathbb Q)^{\otimes n}\otimes_{ \mathbb Q}\mathbb Q[\xi^{(i)}_{h,k}],\ x_i\mapsto \xi_i, \    S_n\mapsto S_n\subset             M_d(\mathbb Q)^{\otimes n}     $$ which factors through the algebra $\mathcal T_{X}^n(V)$. 

By Theorem \ref{Gol} we   have the Goldman element $\mathtt s_1 \in \mathbb Q[\xi_1,\ldots,\xi_k][c^{-1}]  ^{\otimes 2}$ and in the same way elements $\mathtt s_i \in \mathbb Q[\xi_1,\ldots,\xi_k][c^{-1}]  ^{\otimes n}$  which in the splitting  become the generators $(i,i+1)$  of the symmetric group  $S_n$.  

Therefore $\pi_n ( i,i+1]=\mathtt s_i$ (by uniqueness) and since by Formula \eqref{egg} the algebra $\mathbb Q[\xi_1,\ldots,\xi_k][c^{-1}]$  is closed under trace we have  $$\pi_n(\mathcal T_{X}^n(V)[c^{-1}])=  \mathbb Q[\xi_1,\ldots,\xi_k][c^{-1}]  ^{\otimes n}.$$  Since $\pi_n$ on $ \mathcal T_{X}^n(V)[c^{-1}]$ is injective we have the second claim.

 The fact that $\mathcal T_{X}^n(V)\subset M_d(\mathbb Q)^{\otimes n}\otimes_{ \mathbb Q}\mathbb Q[\xi^{(i)}_{h,k}]$ is a prime algebra follows from the fact that it is   torsion free over  its center  $ T^{(0)}_d\langle \Xi\rangle\subset \mathbb Q[\xi^{(i)}_{h,k}] $ and localizes to the Azumaya algebra  $ \mathbb Q[\xi_1,\ldots,\xi_k][c^{-1}]  ^{\otimes n}$  of rank $d^{2n}$  over its center a domain.
\end{proof}
\begin{remark}\label{inpr}
It is an interesting problem, cf.  \cite{TDN}, to understand formulas for the canonical element $\mathtt s$ as  a fraction of a tensor polynomial by a central element.  In the paper  \cite{TDN} the numerator of this expression is called a {\em swap polynomial}.  This problem is treated to some extent in the preprint \cite{P8} where we construct {\em balanced}  swap polynomials.
\end{remark}
\subsection{The spectrum}
Since the algebra $ \mathcal T_{X}^n(V) \subset M_{d^n}[\mathbb Q[\xi^{(i)}_{a,b }]$, Formula \eqref{imme},   is closed under trace then it is a  $d^n$--Cayley--Hamilton algebra, according to the Theory developed in \cite{ppr}. Its    trace algebra equals its center  $ T^{(0)}_d\langle \Xi\rangle$, the same for all $n$. 

Therefore, when we extend the scalars from $\mathbb Q$ to $\mathbb C$  the variety  with coordinate ring $ T^{(0)}_d\langle \Xi\rangle\otimes\mathbb C$  parametrizes isomorphism classes of  semisimple  representations of dimension $d^n$   of the algebra $ \mathcal T_{X}^n(V) \otimes\mathbb C$. 

Now  by the same reason, when $n=1$,   the variety  with coordinate ring $ T^{(0)}_d\langle \Xi\rangle\otimes\mathbb C$  parametrizes the  isomorphism classes of  semisimple  representations of dimension $d $ of the free algebra  $\mathbb C \langle \Xi\rangle$ or of the free $d$--Cayley--Hamilton algebra. That is,  if  $X$ has $k$ elements, conjugacy classes of  $k$--tuples of $d\times d$  matrices $(a_1,\ldots,a_k)\in  M_d(\mathbb C)^k$  generating a semisimple algebra. 

To this $k$--tuple is then associated a map  $ \mathcal T_{X}^n(V)\to M_d(\mathbb C)^{\otimes n}$  and the subalgebra of $M_d(\mathbb C)^k$ generated by the elements  $1^{\otimes i}\otimes a_j\otimes 1^{\otimes n-i-1}$ and $S_n$ is semisimple.  This matrix algebra  is the entire algebra $M_d(\mathbb C)^{\otimes n}$ if and only if  the representation is irreducible  which means it is a point of the spectrum of one of the Azumaya algebras $ \mathbb Q[\xi_1,\ldots,\xi_k][c^{-1}] $.  

 When the representation is semisimple and not irreducible we have a decomposition $V=\oplus_{j=1}^aV_j$ into irreducibles which induces a decomposition  $V^{\otimes n}=\oplus_{i_1,\ldots,i_n}V_{i_1}\otimes \ldots   \otimes V_{i_n}$  whose terms are permuted by the group $S_n$.  In each orbit there is a term 
$W:=V_1^{\otimes h_1}\otimes V_2^{\otimes h_2}\otimes \ldots \otimes V_k^{\otimes h_a},\ \sum h_i=n $  which is stabilized by a Young subgroup $H= S_ { h_1}\times S_ { h_1}\times \ldots \times S_ { h_a}$ giving the semisimple representation $Ind_H^{S_n}  W.$

%
%
%
%
\bigskip

  {\bf A comment}\quad 1)\quad Most of the results of this paper hold in a characteristic free way. In particular all identities with integer coefficients continue to hold.  
  Theorem \ref{FFT0} still holds, from the Theory of Donkin \cite{Don}, provided in
  Formula  \eqref{monf3} one replaces the factors $tr(N_i)$  by $\sigma_j(N_i)$. The Theorem of M. Artin has been generalized by Procesi to all rings, \cite{agpr}.

The only result  which should require a particular care is Theorem \ref{SFT}. 

In fact in order to carry out the proof in positive characteristic one would need to follow closely the rather difficult and non trivial calculations of Zubkov, see \cite{Zubkov1} or   \cite{depr0}. 

I have not tried to do this since it would have made the treatment extremely technical and very hard to follow but I believe that the argument can be generalized to this setting.\smallskip

2)\quad The algebra $T\langle X\rangle^{\otimes n}\ltimes \mathbb Q[S_n]$ contains the two subalgebras $T\langle X\rangle^{\otimes n}$  and $\mathbb Q\langle X\rangle^{\otimes n}.  $  The identities belonging to the first subalgebra are the {\em tensor trace identities}, the ones  belonging to the second subalgebra are the {\em tensor polynomial  identities}.  

Although it is true that these can be deduced from the antisymmetrizer their structure is far from being understood. 
                 A start in the study of   tensor polynomial  identities appears in the paper with F. Huber \cite{hp}. \smallskip

As for   tensor trace identities we know that for $n=1$  they are generated by the $d$--Cayley Hamilton identity. 

For  higher $n$ the situation is more complex since the algebra of equivariant maps is not flat over its trace algebra except for the special case $n=|X|=2$.  This is actually interesting and partially treated in \cite{P8}. Let us explain what happens in general.

 We  have a map $j: T\langle X\rangle^{\otimes n}\to Q _d\langle \Xi\rangle^{\otimes n} $. A simple argument shows that its Kernel is formed by the tensor trace identities since we may view this as a specialization to generic matrices.  Now when we use the  $d$--Cayley Hamilton identity this map factors through a map $j_d: T_d\langle X\rangle^{\otimes n}\to Q _d\langle \Xi\rangle^{\otimes n} $. 
 
 By simple localization arguments we then see that if  $a\in  T_d\langle X\rangle^{\otimes n}$ is in the Kernel of $j_d$ then there is an invariant $b$ so that $ba=0$.  In fact if $b$ is any central polynomial we have $b^ka=0$ for some $k$.

We   can also take   any nonzero discriminant  $\delta:=\det(tr(b_ib_j))$ of $d^2$ elements $b_i\in T_d\langle X\rangle$. Then after localizing $T_d\langle X\rangle[\delta^{-1}] $ is a free module with basis the $b_i$ over  the localized trace algebra $T_d^{(0)}\langle X\rangle[\delta^{-1}] $, so   its $n^{th}$ tensor power is also a free module and embeds in           $Q _d\langle \Xi\rangle^{\otimes n}.$

So we may say that up to multiplication by some power of this  discriminant a tensor trace identity can be deduced from the $d$--Cayley Hamilton identity.

On the other hand   the situation is similar to that of functional identities, so, as in the paper  \cite{bps},  one  can have       tensor trace identities  not deduced from the   $d$--Cayley Hamilton identity. For instance for $d=2$  the tensor polynomial identity  $St(2)=Alt_Xx_1x_2\otimes x_3x_4$, see \cite{hp}  is not a consequence of the $2$--Cayley Hamilton identity.

One can try to see which of the tensor polynomial identities discussed in \cite{hp}  are not a consequence of the $d$--Cayley Hamilton identity.

  \bibliographystyle{amsalpha}

\begin{thebibliography}{99}
\bibitem{AbePit}
S. Abeasis, M. Pittaluga,
\textit{ On a minimal set of generators for the invariants of $3\times 3$ matrices}.
Comm. Algebra \textbf{17} (1989), no. 2, 487--499.


\bibitem{agpr} E. Aljadeff, A. Giambruno, C. Procesi, A. Regev. \textit{Rings with polynomial identities and finite dimensional representations of algebras}, A.M.S. Colloquium Publications,  to appear.


\bibitem{ArM}
M. Artin, \textit{On Azumaya algebras and finite-dimensional representations
of rings}, J. Algebra \textbf{11} (1969), 532--563.


\bibitem{Az} G.  Azumaya, \textit{ On maximally central algebras},
Nagoya Math. J. \textbf{2} (1950), 119--150.



\bibitem{GB} Birkhoff, Garrett,  \textit{Universal algebra}. Comptes Rendus du Premier Congr\`es Canadien de Math\'ematiques, University of Toronto Press, Toronto, (1946) pp. 310--326.

\bibitem{bps} M. Bresar, C. Procesi, S. Spenko, \textit{Functional identities on matrices and the Cayley--Hamilton polynomial,}   Adv.  Math.    \textbf{280} (2015), 439--471.

\bibitem{St} Burris, Stanley N., and H.P. Sankappanavar,   \textit{A Course in Universal Algebra} Springer-Verlag (1981).  Free online edition.

\bibitem{PC} Cohn, Paul Moritz,   \textit{Universal Algebra}. (1981) Dordrecht, Netherlands: D.Reidel Publishing.





\bibitem {depr}  C. De Concini,   C. Procesi, \textit{A characteristic free approach to invariant theory},   Adv. Math. \textbf{21} (1976), 330--354.

\bibitem {depr0}  C. De Concini,   C. Procesi, \textit{The invariant theory of   matrices} A.M.S. University Lecture Series v. \textbf{69}, 151 pp. (2017).
 
 \bibitem{Don}{Donkin, Stephen},
    {  Invariants of several matrices},
   {Inventiones Mathematicae},
    v. {110},
      {1992},
     n. {2},
     pp. {389--401}.
     
\bibitem{FH} F. Huber, \textit{ Positive Maps and Matrix Contractions from the Symmetric Group}  arXiv:2002.12887 28--2--2020.

	\bibitem{hp} F. Huber, C. Procesi \textit{Tensor polynomial identities }, Israel Journal of Mathematics, (to appear)

	\bibitem{KnusO} M.-A. Knus, M. Ojanguren,  \textit{Th\'eorie de la descente et alg\`ebres d'Azumaya},   
Lecture Notes in Mathematics, Vol. \textbf{389} Springer-Verlag, Berlin-New York, 1974. iv+163 pp.
    


 \bibitem{kostant} B. Kostant, \textit{A theorem of Frobenius, a theorem of Amitsur-Levitzki and
cohomology theory}, J. Mathematics and Mechanics \textbf{7} (1958), no. 2, 237--264.


\bibitem{LZ} Lehrer, Gustav; Zhang, Ruibin  \textit{The second fundamental theorem of invariant theory for the orthogonal group.} Ann. of Math. (2) 176 (2012), no. 3, 2031--2054.  


\bibitem{P3} C. Procesi, \textit{The invariant theory of $n\times n$
matrices}, Adv. Math.  \textbf{19} (1976), 306--381.



\bibitem{P7} C. Procesi, \textit{Lie Groups, An approach through invariants and representations},
Springer Universitext, {2007}	pp. {xxiv+596},

 \bibitem{ppr} C. Procesi, \textit{$T$--ideals of Cayley Hamilton algebras}, 2020, 
 
 http://arxiv.org/abs/2008.02222

 \bibitem{ppr1}  C. Procesi, \textit{Norms and Cayley Hamilton algebras}, 2020,   Rendiconti Lincei. Matematica e Applicazioni, a volume in honor of E. Vesentini, (to appear)
 
 \bibitem{P8}  C. Procesi, \textit{A construction of swap or switch polynomials}, 2020,   in preparation



\bibitem{R} Yu. P. Razmyslov, \textit{Trace identities of matrix algebras via a field of
characteristic zero},  Math. USSR Izvestia (translation).  \textbf{8}
(1974), 727--760.
 

\bibitem{TDN} David Trillo, Benjamin Dive, and Miguel Navascu\'es,\  \textit{Translating Uncontrolled Systems in Time,}   arXiv:1903.10568v2 [quant--ph]  28 May 2020.

\bibitem{razmyslov} Yu. P. Razmyslov, \textit{Trace identities of full
matrix algebras over a field of characteristic zero}, Math. USSR
Izv. \textbf{8} (1974), 724--760.

\bibitem{Weyl} H. Weyl, \textit{The Classical Groups:
Their Invariants and Representations},
One of Princeton University Press's Notable Centenary Titles.
Paperback, 1997 

\bibitem{Zubkov1} {Zubkov, A. N.},
   {On a generalization of the {R}azmyslov-{P}rocesi theorem},
  {Algebra i Logika},
    {Sibirski\u\i\ Fond Algebry i Logiki. Algebra i Logika},
    V. {35},
        {1996},
    n {4},
    pp. {433--457}.

\end{thebibliography}

\end{document}